\documentclass[a4,11pt]{amsart}
\usepackage{etex}
\oddsidemargin 0mm
\evensidemargin 0mm
\topmargin 0mm
\textwidth 160mm
\textheight 230mm
\tolerance=9999

\usepackage{amssymb}
\usepackage{amstext}
\usepackage{amsmath}
\usepackage{amscd}
\usepackage{amsthm}
\usepackage{amsfonts}
\usepackage{enumerate}
\usepackage{graphicx}
\usepackage{epic}
\usepackage{latexsym}
\usepackage[all]{xy}
\usepackage[usenames]{color}
\usepackage[pdftex]{hyperref}
\hypersetup{
  bookmarksnumbered = true,
  bookmarksopen = false,
  bookmarksopenlevel = 2,
  colorlinks = false,
  pageanchor=true,
  pdfcreator={Takahide Adachi, Takuma Aihara, Aaron Chan},
  pdfstartview = FitW
}





\newtheorem*{corollary*}{Corollary}
\newtheorem{theorem}{Theorem}[section]

\newtheorem{corollary}[theorem]{Corollary}
\newtheorem{lemma}[theorem]{Lemma}
\newtheorem{proposition}[theorem]{Proposition}
\newtheorem{question}[theorem]{Question}

\newtheorem*{claim*}{Claim}

\theoremstyle{definition}
\newtheorem{definition}[theorem]{Definition}
\newtheorem{remark}[theorem]{Remark}
\newtheorem{example}[theorem]{Example}

\newtheorem{definitiontheorem}[theorem]{Definition-Theorem}

\theoremstyle{remark}

\numberwithin{equation}{theorem}
\renewcommand{\mod}{\operatorname{mod}}
\newcommand{\proj}{\operatorname{proj}}
\newcommand{\End}{\operatorname{End}}
\newcommand{\Hom}{\operatorname{Hom}}
\newcommand{\add}{\operatorname{add}}

\newcommand{\soc}{\operatorname{\mathsf{soc}}}

\newcommand{\Kb}{\mathsf{K}^{\rm b}}
\newcommand{\T}{\mathcal{T}}

\newcommand{\CC}{\mathcal{C}}

\newcommand{\Z}{\mathbb{Z}}

\newcommand{\ol}{\overline}
\newcommand{\xto}{\xrightarrow}
\newcommand{\from}{\operatorname{\leftarrow}}

\newcommand{\tilt}{\operatorname{\mathsf{tilt}}}
\newcommand{\ipt}{\operatorname{\mathsf{2ipt}}}
\newcommand{\gcx}{\operatorname{\mathsf{2scx}}}
\newcommand{\SW}{\operatorname{\mathsf{SW}}}
\newcommand{\AW}{\operatorname{\mathsf{AW}}}
\newcommand{\CW}{\operatorname{\mathsf{CW}}}
\newcommand{\ntilt}[1]{\operatorname{\mathsf{#1-tilt}}}
\newcommand{\ttilt}{\operatorname{\mathsf{2-tilt}}}
\newcommand{\e}{\epsilon}
\newcommand{\mm}{\operatorname{\mathfrak{m}}}

\newcommand{\val}{\mathrm{val}}\newcommand{\vir}{\mathrm{vr}}
\newcommand{\op}{\mathsf{op}}
\newcommand{\G}{\mathbb{G}}
\newcommand{\W}{\mathbb{W}}
\newcommand{\hd}{\mathfrak{h}}
\newcommand{\tail}{\mathfrak{t}}
\newcommand{\word}{\mathsf{w}}

\setcounter{tocdepth}{2}

\def\addsym #1: #2#3{#1 \> \parbox{4.8in}{#2 \dotfill \pageref{#3}}\\}
\makeindex

\begin{document}
\title[Two-term tilting complexes over Brauer graphs algebras]{Classification of two-term tilting complexes over Brauer graph algebras}
\date{\today}
\author{Takahide Adachi}
\address{Graduate school of Science, Osaka Prefecture University, 1-1 Gakuen-cho, Nakaku, Sakai, Osaka, 599-8531, Japan}
\email{adachi@mi.s.osakafu-u.ac.jp}
\author{Takuma Aihara}
\address{Department of Mathematics, Tokyo Gakugei University,
4-1-1 Nukuikita-machi, Koganei, Tokyo 184-8501, Japan}
\email{aihara@u-gakugei.ac.jp}
\author{Aaron Chan}
\address{Graduate School of Mathematics, Nagoya University, Furocho, Chikusaku, Nagoya 464-8602, Japan}
\email{aaron.kychan@gmail.com}
\thanks{2010 {\em Mathematics Subject Classification.} 16G10;  16G20, 18E30}
\thanks{{\em Key words and phrases.} tilting complex, Brauer graph algebra, ribbon graph}
\thanks{T. Aihara and A. Chan was partly supported by IAR Research Project, Institute for Advanced Research, Nagoya University.
T. Aihara was supported by Grant-in-Aid for Young Scientists 15K17516.
T. Adachi is supported by Grant-in-Aid for JSPS Research Fellow 17J05537.
A. Chan is supported by JSPS International Research Fellowship.
}

\begin{abstract}
Using only the combinatorics of its defining ribbon graph, we classify the two-term tilting complexes, as well as their indecomposable summands, of a Brauer graph algebra.  As an application, we determine precisely the class of Brauer graph algebras which are tilting-discrete.
\end{abstract}

\maketitle

\section{Introduction}
The derived category of a non-semisimple symmetric algebra is a beast.  For example, a simple classification of perfect objects usually does not exist, which makes the task of finding derived equivalent algebras - one of the central themes in modular representation theory of finite groups, extremely difficult.  The Okuyama-Rickard construction \cite{Oku,Ric1} gives an easy way to calculate non-trivial tilting complexes.  Therefore, one would naturally hope that all tilting complexes can be obtained by applying such a  construction repeatedly, and  use this to determine, say, the derived equivalence class.  Roughly speaking, an algebra is tilting-connected if such a hope can be realised.

One of the main results in \cite{AI} is that there is a partial order structure on the set of tilting complexes.  Using this translation, tilting-connectedness simply means that the Hasse quiver of this partially ordered set is connected.  One could attempt to exploit this combinatorial property to better understand, for instance, the derived Picard group(oid) of the derived category.  See \cite{Zvo2} for a fruitful result in this direction.

Determining tilting-connectedness is not easy - at least with the current technology.  So far, the only known tilting-connected symmetric algebras are the local ones \cite{AI}, and the representation-finite ones \cite{A}.  The ``proof" for tilting-connectedness of local symmetric algebras in fact is the answer we wanted originally - a classification of tilting complexes.  More precisely, a tilting complex of a local algebra is precisely a stalk complex given by a finite direct sum of the (unique) indecomposable projective module.  On the other hand, the proof for the representation-finite case does not generalise to an arbitrary finite dimensional symmetric algebra.

A symmetric algebra is said to be tilting-discrete if the set of $n$-term tilting complexes is finite for any natural number $n$.  In such a case, the algebra will be tilting-connected.  This notion is introduced in \cite{A} in order to find a more computable approach to determine tilting-connectedness.  To see if this approach can be successful for non-representation-finite non-local symmetric algebras, we use the Brauer graph algebras as a testing ground.  We remark here that the tilting-discreteness, along with its applications, of the preprojective algebras is also studied in a parallel work of the second author and Mizuno \cite{AM}.

The representation-finite Brauer graph algebras (the Brauer tree algebras) were discovered by Brauer in the forties during the dawn of modular representations of finite groups.
In this article, we focus on their generalisation called the Brauer graph algebras.
These algebras are tame symmetric algebras which are arguably the easiest class of symmetric algebras that one can play with.  This is because the structure of such an algebra is encoded entirely in a simple combinatorial object, called the Brauer graph.  It often turns out that one can replace many algebraic and homological calculations into simple combinatorial games on the Brauer graph.  These results in turn would inspire further development in the techniques and theories for larger classes of algebras, such as group algebras or tame symmetric algebras.

The first homological calculation which will be turned into pure combinatorics is the determination of two-term (pre)tilting complexes (Theorem \ref{mainbij}).  Since the combinatorics is entirely new, we avoid giving the statement of the theorem here.
For readers with no knowledge about Brauer graphs, we briefly 
recall that a Brauer graph is a graph with a cyclic ordering of the edges around each vertex, and a positive number called multiplicity associated to each vertex.  Forgetting the multiplicities, one obtains an orientable ribbon graph (or fatgraph) - a combinatorial object which appears in many other areas of mathematics, such as dessin d'enfants, Teichm\"{u}ller and moduli space of curves, homological mirror symmetry, etc.

Our second result is the classification of tilting-discrete Brauer graph algebras.  We achieve this by using the ``two-term combinatorics" and the tilting mutation theory for Brauer graph algebras.  Since most of the tilting-discrete Brauer graph algebras are neither representation-finite nor local, we have obtained new classes of tilting-connected symmetric algebras.

\begin{theorem}
Let $\G$ be a Brauer graph, and $\Lambda_\G$ its associated Brauer graph algebra.
\begin{enumerate}[(1)]
\item (Proposition \ref{2tilt}) The partially ordered set of basic two-term tilting complexes of $\Lambda_\G$ is isomorphic to that of $\Lambda_{\G'}$ if $\G$ and $\G'$ have the same underlying ribbon graph.

\item (Theorem \ref{TD of type odd}) $\Lambda_\G$ is tilting-discrete if, and only if, $\G$ contains at most one cycle of odd length, and no cycle of even length.

\item (Corollary \ref{cor-type-odd-equiv}) In the case of (2), any algebra derived equivalent to  $\Lambda_\G$ is also a Brauer graph algebra $\Lambda_{\G'}$.  Moreover, $\G'$ is flip equivalent to $\G$ in the sense of \cite{A2}.
\end{enumerate}
\end{theorem}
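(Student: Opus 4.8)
The plan is to derive all three assertions from the combinatorial classification of two-term (pre)tilting complexes in Theorem~\ref{mainbij}, together with the tilting mutation machinery of \cite{AI,A}. For assertion~(1), I would argue as follows. Theorem~\ref{mainbij} yields a bijection between the basic two-term tilting complexes of $\Lambda_\G$ and a set of combinatorial configurations attached to $\G$; the key point, already visible in the statement of that theorem, is that this set of configurations involves only the cyclic orderings of edges around the vertices, i.e.\ the underlying ribbon graph, and is completely insensitive to the multiplicity function. It then remains to see that the silting partial order is transported, under this bijection, to a partial order on configurations that is again formulated purely in terms of the ribbon graph; I would do this by describing the Hasse arrows (irreducible left mutations) as an explicit elementary move on configurations. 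Granting this, the two posets in~(1) are both isomorphic to one and the same combinatorial poset as soon as $\G$ and $\G'$ determine the same ribbon graph.

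For assertion~(2), the ``only if'' direction is the easy one: if $\G$ has a cycle of even length, or two cycles of odd length, then Theorem~\ref{mainbij} lets me write down an infinite family of pairwise non-isomorphic two-term tilting complexes, since such a cycle configuration contributes an infinite family of admissible configurations; hence $\Lambda_\G$ already fails to have finitely many two-term tilting complexes, so it is not tilting-discrete. The ``if'' direction is the substantial one. Assuming $\G$ has at most one odd cycle and no even cycle, Theorem~\ref{mainbij} first gives that $\Lambda_\G$ has only finitely many two-term tilting complexes. I would then show that this finiteness is inherited along mutation: every irreducible left or right mutation of a tilting complex of $\Lambda_\G$ has endomorphism algebra isomorphic to $\Lambda_{\G'}$ for a Brauer graph $\G'$ obtained from $\G$ by a flip, and the condition ``no even cycle, at most one odd cycle'' is invariant under such flips. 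Hence every endomorphism algebra of a tilting complex reachable from $\Lambda_\G$ by iterated irreducible mutation is again a Brauer graph algebra of this type, and so again has only finitely many two-term tilting complexes; the reduction criterion for tilting-discreteness of \cite{A} then yields that $\Lambda_\G$ is tilting-discrete.

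For assertion~(3), keep the hypothesis of~(2). If $\Gamma$ is derived equivalent to $\Lambda_\G$, then $\Gamma\cong\End_{\Kb(\proj\Lambda_\G)}(T)$ for some basic tilting complex $T$ by Rickard's theorem \cite{Ric}. Since $\Lambda_\G$ is tilting-discrete by~(2), it is tilting-connected, so $T$ is obtained from $\Lambda_\G$ by a finite sequence of irreducible mutations; by the ``mutation $=$ flip'' statement established in the proof of~(2), each step replaces the current Brauer graph algebra by $\Lambda_{\G'}$ with $\G'$ a flip of the previous graph, and induction along the sequence shows that $\Gamma\cong\Lambda_{\G'}$ for some $\G'$ flip equivalent to $\G$ in the sense of \cite{A2}. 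The main obstacle throughout is the ``if'' direction of~(2): one needs both the precise combinatorial criterion for having only finitely many two-term tilting complexes and the fact that irreducible tilting mutation is realised by a flip of Brauer graphs under which this criterion is invariant; once these are in hand, the reduction argument for~(2) and the induction for~(3) are essentially formal.
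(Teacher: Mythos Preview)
Your proofs of (2) and (3) match the paper's approach essentially step for step: finiteness of $\ttilt\Lambda_\G$ via Theorem~\ref{mainbij} and Proposition~\ref{type-odd}; invariance of the cycle condition under flips (Lemma~\ref{flip and type odd}); the ``mutation $=$ flip'' identification (Proposition~\ref{prop-flip der equiv}); and then the reduction criterion (Proposition~\ref{tilting-discrete}, which in the paper is attributed to \cite{AM} rather than \cite{A}). The argument for (3) is likewise the same: tilting-discrete $\Rightarrow$ tilting-connected, hence every tilting complex is reached by iterated irreducible mutation, and one inducts along the mutation sequence.

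For (1), however, your route diverges from the paper's and is noticeably less direct. You propose to transport the partial order by giving a combinatorial description of the Hasse arrows (irreducible mutations) on admissible sets of walks. The paper does not do this; instead it observes that the relation $T\geq T'$ on two-term tilting complexes amounts to $\Hom(X,X'[1])=0$ for all indecomposable summands $X$ of $T$ and $X'$ of $T'$, and then invokes Proposition~\ref{lem-hom-to-comb}(1), which translates this single Hom-vanishing into the purely ribbon-graph condition ``neither (U2) nor (L2) holds for $W_X,W_{X'}$''. Since (U2) and (L2) make no reference to multiplicities, order-preservation is immediate. Your approach would require an explicit, multiplicity-free description of $\mu_X^-(T)$ for an arbitrary two-term tilting complex $T$ and summand $X$; this is not established anywhere in the paper (Proposition~\ref{prop-flip der equiv} only handles $T=\Lambda_\G$), and carrying it out would in any case force you back through the same Hom-space analysis that yields Proposition~\ref{lem-hom-to-comb}. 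So your plan is not wrong, but it takes a detour that ultimately needs the same key lemma the paper applies directly.
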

Note that the derived equivalence class in (3) is not entirely new; for slightly more details, see the discussions at the end of Section \ref{sec-tilt-discrete}.

The classification and description of two-term tilting complexes (and their indecomposable summands) of the Brauer star algebras, i.e. (representation-finite) symmetric Nakayama algebras, have already been studied in \cite{SZI}, and implicitly in \cite{Ada}.  In the case of the Brauer tree algebras, these tasks were carried out in \cite{AZ1,Zvo1}.  We note that the result in \cite{SZI} actually classifies more than just two-term tilting complexes.  In all the mentioned articles, as well as ours, the indecomposable summands of two-term tilting complexes are classified first. Then one gives the conditions on how they can be added together to form tilting complexes.  In contrast to \cite{SZI,Zvo1}, we will not calculate the corresponding endomorphism algebra in this article, but leave it for our sequel.  For Brauer tree algebras, the classification in \cite{Zvo1} is described in terms of the Ext-quiver of the algebra, whereas ours are described by combinatorics on the defining ribbon graph.

The combinatorial language we use is heavily influenced by the ribbon graph theory.  The use of this language for Brauer graph algebras is slightly different from the traditional approach used in, for example, \cite{Ro,Kau}, but it is also not new - we learn it from the paper of Marsh and Schroll \cite{MS}, which relates Brauer graphs with surface triangulation (and $n$-angulation) and cluster theory.  The key advantage of adopting this approach is that we can clear out many ambiguities when there is a loop in the Brauer graph.  Moreover, while writing up this article, this new language gives us a glimpse of a connection between geometric intersection theory and the tilting theory of Brauer graph algebras (see Remark \ref{rmk-geometry}).  We hope to address this issue in a subsequent paper.

Our approach to the homological calculations draws results and inspirations from \cite{AI,A,AIR}.  The key technique which makes the classification of two-term (pre)tilting complexes possible is Lemma \ref{lem-non-zero-hom}.  This is a particular case of a vital lemma in relating $\tau$-tilting theory with two-term silting/tilting complexes in \cite{AIR}.  This homological property was also used in \cite{AZ1,Zvo1}.  While we will not mention and introduce $\tau$-tilting theory formally, a partial motivation of this work is in fact to obtain a large database of calculations for $\tau$-tilting theory, and hopefully to inspire further development in said theory.  The idea of investigating tilting-discreteness using only knowledge about two-term tilting complexes in this article (and the investigation of similar vein in \cite{AM}) is also inspired by the elegance of $\tau$-tilting theory.

This article is structured as follows.  We will first go through in Section \ref{sec-ribbon-combinatorics} the essential ribbon (and Brauer) graph combinatorics needed for this paper.  No mathematical prerequisites are needed to understand this section, although knowledge of basic notions in ordinary graph theory would be helpful to find intuitions. 
Section \ref{sec-alg-hom-prelim} is devoted to recall known results and notions needed to understand the algebraic and homological side of the first main theorem.
Brief reminder on the basic tilting theory is presented separately in Subsection  \ref{sec-tilting-theory}, whereas a review on Brauer graph algebras and their modules can be found in Subsection \ref{sec-BGA}.

In Section \ref{sec-twotilt}, we will first explain some elementary observations on the two-term tilting complexes of a Brauer graph algebra.  This allows us to write down maps between the homological objects (two-term tilting complexes and its direct summands) and the combinatorial objects, which form the statement of our first main result (Theorem \ref{mainbij}).

We will spend the entire Section \ref{sec-proof} to prove Theorem \ref{mainbij}.
In Section \ref{sec-application}, we explore an application of Theorem \ref{mainbij}.
We first present some preliminary material on tilting-connectedness and related notions in Subsection \ref{sec-application-prelim}.
Then we determine a sufficient condition for a Brauer graph algebra to be tilting-connected in Subsection \ref{sec-tilt-discrete}.
For the ease of readers, we also include the list of notations used in this article and the locations of their first appearances in Appendix \ref{sec-notations}.

\section*{Acknowledgment}
We owe our deepest gratitude to Osamu Iyama for many fruitful discussions, as well as the financial support for the third author's visit to Nagoya University, which nurtured this research.  This article is typed up during AC's subsequent visits at Nagoya University, and finished during the first author's visit at Uppsala University.  We are thankful for the hospitality of these institutions.
We thank the referee for pointing out missing references and known results in the literature.
We thank also Ryoichi Kase and Alexandra Zvonareva for various discussions.

\section*{Convention}
The following assumptions and conventions will be imposed throughout the article.
\begin{enumerate}[(1)]
\item For a sequence $w=(e_1,e_2,\ldots,e_n)$, a subsequence $w'$ is said to be \emph{continuous}, and denoted by $w'\subset w$\label{sym-subseq}, if it is of the form $(e_i,e_{i+1},\ldots, e_{i+k})$. 
We adopt this convention to avoid any possible confusion with terminologies used in string combinatorics (cf. Subsection \ref{subsec-string-com}).
\item The composition of maps $f:X\to Y$ and $g:Y\to Z$ is $gf:X\to Z$.
\item All algebras are assumed to be basic, indecomposable, and finite dimensional over an algebraically closed field $\Bbbk$.  We will often use $\Lambda$ to denote such an algebra.
\item We always work with finitely generated right modules, and use $\mod\Lambda$ to denote the category of finitely generated right $\Lambda$-modules.
\item We denote by $\proj\Lambda$ the full subcategory of $\mod\Lambda$ consisting of 
all finitely generated projective $\Lambda$-modules.
The bounded homotopy category of $\proj\Lambda$ is denoted by $\Kb(\proj\Lambda)$.
\item We sometimes write $\Lambda=\Bbbk Q/I$, where $Q$ is a finite quiver and $I$ is the ideal of relations.
\item A simple (resp. indecomposable projective) $\Lambda$-module corresponding to a vertex $i$ of $Q$ is denoted by $S_i$\label{sym-simple} (resp. by $P_i$\label{sym-proj}); an arrow of $Q$ is identified with a map between indecomposable projective $\Lambda$-modules.
\item For an object $X$ (in $\mod \Lambda$ or $\Kb(\proj\Lambda)$), we denote by $|X|$\label{sym-cardinal} the number of isomorphism classes of the indecomposable direct summands of $X$.
\end{enumerate}

\section{Ribbon graphs combinatorics}\label{sec-ribbon-combinatorics}

\begin{definition}
A \emph{ribbon graph}\label{sym-G} is a datum $\G=(V,H,s,\ol{\,\cdot\,},\sigma)$, where 
\begin{enumerate}
\item $V$\label{sym-V} is a finite set, where elements are called \emph{vertices}. 
\item $H$\label{sym-H} is a finite set, where elements are called \emph{half-edges}.
\item $s:H\to V$\label{sym-s} specifies the vertex $s(e)$ for which a half-edge $e$ is  emanating.
\item $\ol{\,\cdot\,}:H\to H$\label{sym-ol} is a fixed-point free involution, i.e. $\ol{e}\neq e$ and $\ol{(\,\ol{e}\,)}=e$ for all $e\in H$. 
\item $\sigma:H\to H$\label{sym-sigma} is a permutation on $H$ so that the set of $\langle\sigma\rangle$-orbits is $V$ and $s:H\to V=H/\langle\sigma\rangle$ is the induced projection.
For $v\in V$, the \emph{cyclic ordering around $v$} is the restriction of $\sigma$ to $s^{-1}(v)$, which will be denoted by $(e,\sigma(e),\ldots,\sigma^{k-1}(e))_v$\label{sym-cyclic-order-v} for some $e\in H$ with $s(e)=v$, where $k=|s^{-1}(v)|$.
Viewing this notation as a sequence up to cyclic permutation, its subsequences will be called \emph{cyclic suborderings around $v$}.
\end{enumerate}
\end{definition}

The geometric realisation of $\G$ is $(H\times[0,1])/\sim$, where $\sim$ is the equivalence relation defined by 
$(e,\ell)\sim(\ol{e},1-\ell)$ and $(e,0)\sim (\sigma(e),0)$.  
Intuitively, we ``glue" $e$ and $\ol{e}$ together to form a graph-theoretic edge (geometric line segment) incident to 
$s(e)$ and $s(\ol{e})$.
Because of this, we call an unordered pair $\{e,\ol{e}\}$ an \emph{edge} of $\G$, and the vertices (or vertex) $s(e)$ and $s(\ol{e})$ the \emph{endpoint(s)} of the edge $E$.
We also define the \emph{valency} of a vertex $v$ as $\val(v):=|s^{-1}(v)|$\label{sym-val}.
Note that a ribbon graph (or its geometric realisation) is also called a locally embedded graph or a fatgraph in the literature.

A \emph{Brauer graph} is a ribbon graph equipped with a \emph{multiplicity function} $\mm:V\to \mathbb{Z}_{>0}$\label{sym-mm} 
which assigns a positive integer, called \emph{multiplicity}, to each vertex of $\G$.

We will impose the following assumptions and conventions on ribbon graphs throughout.
\begin{itemize}
\item A Brauer/ribbon graph is always assumed to be connected, i.e. its geometric realisation is connected.

\item We will always denote half-edges by small alphabets such as $e\in H$, 
and we will denote the corresponding edge $\{e,\ol{e}\}$ by the same letter in capital, which is $E$ here.

\item According to the definition of Brauer graph algebras (see Definition \ref{def-BGA}), 
we usually present $\G$ graphically using its geometric realisation with the cyclic ordering of (half-)edges around 
each vertex presented in the counter-clockwise direction.

\item Having said that, whenever we present a local structure such as
\begin{align}
\xymatrix{
&  {v} \ar@{-}[r]^{e_2}\ar@{-}[l]_{e_1}&
}\notag
\end{align}
then the two lines ($e_1$ and $e_2$) emanating from $v$ are regarded as half-edges emanating from $v$.
\end{itemize}

\begin{example}\label{eg-two-loop}
Let $\G$ be the ribbon graph:
\[
(\{v\},\;\; \{e,\ol{e},f,\ol{f}\},\;\; s\equiv v,\;\; \ol{\,\cdot\,},\;\;(e,\ol{e},\ol{f},f)_v),
\]
Then the geometric realisation of $\G$ is:
\begin{align}
\xymatrix@R=30pt@C=30pt{ & & & \\
&  {v} \ar@{-} `ul[l] `dl[d]_{E}  `dr[] []  \ar@{-} `ur[r] `dr[d]^{F}  `dl[] []
& &\text{or locally around $v$: }  \\
& & &} \xymatrix@R=25pt@C=40pt{ & &\\
 &  {v} \ar@{-}[lu]^{e} \ar@{-}[ld]^{\ol{e}}  \ar@{-}[rd]^{\ol{f}} \ar@{-}[ru]^{f} & \\
& & } \notag 
\end{align}
\end{example}

\begin{definition}[Half-walk, walk, and signed walk]\label{def-walks}
\begin{itemize}
\item[(a)] A non-empty sequence $w$\label{sym-halfwalk} of half-edges $(e_1,\ldots,e_l)$ such that 
$s(e_{i+1})=s(\ol{e_i})$ for all $i=1,\ldots,l-1$ is called a \emph{half-walk}.
Defining $\ol{w}=(\ol{e_l},\ldots,\ol{e_1})$ makes $\ol{\,\cdot\,}$ an involution on the set of half-walks.

\item[(b)] A \emph{walk} on a ribbon graph $\G$ is the unordered pair $W=\{w,\ol{w}\}$ of half-walks.  For exposition convenience, by ``a walk $W$ given by (the half-walk) $w$" we mean that $W=\{w,\ol{w}\}$\label{sym-walk}.

\item[(c)] For a half-walk $w=(e_1,\ldots,e_l)$, we define $s(w):=s(e_1)$ and $s(\ol{w})=s(\ol{e_l})$ 
as the \emph{endpoint(s)} of $W=\{w,\ol{w}\}$.  We say that a vertex $v$ (resp. half-edge $e$) is in $W$ if $v=s(\ol{e_l})$ or $v=s(e_i)$ (resp. $e=e_i$ or $\ol{e_{i}}$) for some $i\in\{1,\ldots,l\}$.

\item[(d)] A \emph{signature} on a walk $W=\{w=(e_1,\ldots,e_l),\ol{w}\}$ is an assignment of signs $\e_W(e)=\e_W(\ol{e})$\label{sym-signature} on the half-edges in $W$ such that $\e_W(e_i)\neq \e_{W}(e_{i+1})$ for all $i\in\{1,\ldots,l-1\}$. 
A walk equipped with a signature is called a \emph{signed walk}. A \emph{signed half-walk} is a half-walk $w$ equipped with a signature on $W=\{w,\ol{w}\}$.  We often denote a signed half-walk by $(w;\e)$\label{sym-signed-hw} or $(e_1^{\e(e_1)},e_2^{\e(e_2)},\ldots,e_l^{\e(e_l)})$.
Denote by $\SW(\G)$\label{sym-SWG} the set of signed walks of $\G$.
\end{itemize}
\end{definition}
Note that the notation of half-walk has no subscript next to the closing bracket, whereas the subscript $v$ in $(e_1,e_2,\ldots,e_k)_v$ clarifies that this sequence represents a cyclic subordering around $v$.

Note that our ``walks" here can be visualised as walks in graph theory, if we regard (the geometric realisation of) $\G$ as a graph of undirected edges. 
However, they are not exactly the same as graph-theoretic walks, as we can see in the following example.

\begin{example}
Let $\G$ be the ribbon graph in the previous example.
There are four signed walks induced by a graph-theoretic walk (with signs) $(v,E^+,v,F^-,v)$. 
They are given by the half-walks $w_1=(e^+,f^-)$, $w_2=(e^+,\ol{f}^-)$, $w_3=(\ol{e}^+,f^-)$, $w_4=(\ol{e}^+,\ol{f}^-)$.

Note that one can not always find a signature for a walk.  For example, the walk $(e,\ol{e})$ in this example cannot have a signature.  On the other hand, if one can define a signature on a walk, then there are exactly two choices of signature.
\end{example}

The combinatorial gadget is named in the spirit of the Green's walk around a Brauer tree \cite{Gre}, which is essentially a combinatorial description of the minimal projective resolution of a ``maximal non-projective uniserial module" in the sense of \cite[Def 2.1]{Ro}; see also Example \ref{eg-hook}.
We will see in Section \ref{sec-twotilt} that the signed walks coincide with the minimal projective \emph{presentations} of certain modules of a Brauer graph algebra.

\begin{definition}[Common subwalk] \label{def-common-subwalk}
\begin{enumerate}
\item We denote by $w\cap w'$\label{sym-common-subwalk} the set of half-walks $z$ such that $z\subset w$, $z\subset w'$, and
there is no $z'\neq z$ with $z\subset z'\subset w$ and $z\subset z'\subset w'$.

\item A \emph{subwalk} $Z$ of $W$ is a walk given by some continuous subsequence $z\subset w\in W$.

\item A \emph{common subwalk} $Z$ of $W$ and $W'$ is a walk given by $z$ with $z\subset w$ and $z\subset w'$ for some $w\in W$ and $w'\in W$.

\item We denote by $W\cap W'$\label{sym-set-common-subwalk} the set of common subwalks $Z$ given by $z\in w\cap w'$, for some $w\in W$ and $w'\in W'$.  An element of $W\cap W'$ is called a \emph{maximal common subwalk} of $W$ and $W'$.
\end{enumerate}
\end{definition}

To ease our burden of explaining the combinatorics for various definitions and proofs,
we will attach some extra data to a signed half-walk $w=(e_1,\ldots,e_l;\e_W)\in W$ 
which are uniquely determined by the signature of $W$.

\begin{definition}
A \emph{virtual (half-)edge} is an element in the set $\{\vir_-(e),\vir_+(e)\mid e\in H\}$\label{sym-vir}.  We can augment $s$ on the virtual edges so that $s(\vir_\pm(e))=s(e)$.
Let $v$ be $s(e)$ and suppose $(e_1,\ldots,e_k)_v$ is the cyclic ordering around $v$.  
We define the \emph{cyclic ordering (around $v$) accounting the virtual edges} as
\begin{align}\label{eq-cyc-ord-vir}
(\vir_-(e_1),e_1,\vir_+(e_1),\vir_-(e_2),e_2,\vir_+(e_2),\ldots,\vir_-(e_k),e_k,\vir_+(e_k))_v.
\end{align}
A (possibly non-continuous) subsequence of (\ref{eq-cyc-ord-vir}) is called \emph{cyclic subordering around $v$ accounting the virtual edges}.
Suppose  a signed walk $W$ is given by $w=(e_1,\ldots,e_l; \e_W)$.  We define the following virtual edges attached to $W$:
\begin{align}
e_0 = \ol{e_0} & := \vir_{-\e_W(e_1)}(e_1) \notag \\
e_{l+1} = \ol{e_{l+1}} & := \vir_{-\e_W(e_l)}(\ol{e_l})\notag 
\end{align}
We also define $\e_W(e_0)=\e_W(\ol{e_0})=-\e_W(e_1)$ and $\e_W(e_{l+1})=\e_W(\ol{e_{l+1}})=-\e_W(e_l)$.
\end{definition}

From now on, one should always bear in mind the following convention:
\emph{
\begin{center}
A cyclic subordering around an endpoint of a (half-)walk involving $e_0$ and/or $e_{l+1}$ is always regarded as the cyclic subordering accounting the virtual edges.
\end{center}
}
Unless otherwise specified, we fix $W$ and $W'$ as two (not necessarily distinct) signed walks given by half-walks
$w=(e_1,\ldots,e_m;\e_W)$ and $w'=(e_1',\ldots,e_n';\e_{W'})$ respectively.
Moreover, it is automatically understood what we mean by $e_0,e_{m+1},e'_0,e'_{n+1}$, etc. from the definition of virtual edges.

\begin{definition}[Sign condition]
We say that a signed walk $W$ (or a signed half-walk $w\in W$) \emph{satisfies the sign condition} if $\e_W(e_1)=\e_W(e_m)$ whenever $s(e_{1})=s(\ol{e_{m}})$.
In general, we say that two walks $W, W'$ satisfy the sign condition if all of the following conditions are satisfied:
\begin{itemize}
\item $\e_W(e_1)=\e_{W'}(e'_1)$ if $s(e_1)=s(e'_1)$;
\item $\e_W(e_1)=\e_{W'}(e'_n)$ if $s(e_1)=s(\ol{e'_n})$;
\item $\e_W(e_m)=\e_{W'}(e'_1)$ if $s(\ol{e_m})=s(e'_1)$;
\item $\e_W(e_m)=\e_{W'}(e'_n)$ if $s(\ol{e_m})=s(\ol{e'_n})$.
\end{itemize}
\end{definition}

\begin{definition}[Non-crossing condition at a maximal common subwalk]
Let $Z$ be a maximal common subwalk of $W$ and $W'$ given by $z=(t_1,t_2,\ldots, t_\ell)\in w\cap w'$ so that $t_{k}=e_{i+k-1}=e'_{j+k-1}$ for all $k\in\{1,\ldots,\ell\}$.  
Then we call the pair of cyclic suborderings on $\{\ol{e_{i-1}},\ol{e_{j-1}'},t_1\}$ and $\{e_{i+\ell},e_{j+\ell},\ol{t_\ell}\}$ the \emph{neighbourhood cyclic orderings} of $Z$.
We say that $W$ and $W'$ are \emph{non-crossing at $Z$} if the following holds:
\begin{itemize}
\item[(NC1)] $\e_W(t_{k})=\e_{W'}(t_{k})$ for each $k\in\{ 1,\ldots, \ell\}$.
\item[(NC2)] With the exception of $i=j=1$ and/or $m+1-i-\ell=n+1-j-\ell=0$ (i.e. $u$ and/or $v$ being the endpoint(s) of \emph{both} walks), the neighbourhood cyclic orderings of $Z$ are
\begin{align}
\textnormal{either } &(t_1,\ol{e_{i-1}},\ol{e'_{j-1}})_{s(z)}\ \textnormal{ and }\ (\ol{t_\ell},e'_{j+\ell},e_{i+\ell})_{s(\ol{z})}\ \textnormal{ respectively}, \notag\\
\textnormal{or }\ &(t_1,\ol{e'_{j-1}},\ol{e_{i-1}})_{s(z)}\ \textnormal{ and }\ (\ol{t_\ell},e_{i+\ell},e'_{j+\ell})_{s(\ol{z})} \ \textnormal{ respectively}. \notag
\end{align}

Visualising the two cases locally:
\begin{align}
\xymatrix@C=15pt{
 \ar@{-}[rd]_{\ol{e_{i-1}}} & & & & & & & \ar@{-}[ld]^{e_{i+\ell}} \\
  &  {s(z)} &  \ar@{.}[rrr]& \ar@{-}[ll]_{t_{1}} & \ar@{-}[rr]^{\ol{t_\ell}} & &  {s(\ol{z})}\\
\ar@{-}[ru]^{\ol{e'_{j-1}}} & & & & & & &\ar@{-}[lu]_{e'_{j+\ell}}
}\qquad \xymatrix@C=15pt{
 \ar@{-}[rd]_{\ol{e'_{j-1}}} & & & & & & & \ar@{-}[ld]^{e'_{j+\ell}} \\
  &  {s(z)} & \ar@{.}[rrr]&  \ar@{-}[ll]_{t_{1}} & \ar@{-}[rr]^{\ol{t_\ell}} & & {s(\ol{z})}\\
\ar@{-}[ru]^{\ol{e_{i-1}}} & & & & & & &\ar@{-}[lu]_{e_{i+\ell}}
}\notag
\end{align}
\end{itemize}
\end{definition}
\medskip

For a vertex in $v=s(e_i)$ in $W$, we refer to the set $\{\ol{e_{i-1}},e_i\}$ as a \emph{neighbourhood} of $v$ in $W$.
Suppose $\{a,b\}$ and $\{c,d\}$ are neighbourhoods of $v$ in $W$ and in $W'$ respectively.
We say that $v$ is an \emph{intersecting vertex} of $W$ and $W'$, if $a,b,c,d$ are pairwise distinct.
Note that $v$ can be an intersecting vertex with respect to multiple different pairs of neighbourhoods.

\begin{definition}[Non-crossing condition at an intersecting vertex and admissible walks]\label{def-NC-admissible}
We say that $W$ and $W'$ are \emph{non-crossing at the intersecting vertex $v$} if the following condition is satisfied.
\begin{itemize}
\item[(NC3)] If $v$ is an intersecting vertex with respect to the neighbourhoods $\{a,b\}$ in $W$ and $\{c,d\}$ and $W'$, and at most one of $a,b,c,d$ is virtual, then we have either one of the following cyclic subordering around $v$ and signature:
\begin{align}
\notag (a^+,b^-,c^+,d^-)_v, \quad \;\; (a^+,b^-,c^-,d^+)_v.
\end{align}
The local structure around $v$ for these two conditions can be visualised as
\begin{align}
\xymatrix@R=20pt@C=30pt{
\ar@{-}[dr]_{b^-} & & \ar@{-}[dl]^{a^+} & & \ar@{-}[dr]_{b^-} & & \ar@{-}[dl]^{a^+}\\
 &  {v} &  & \text{and} &  &  {v} &  \\
\ar@{-}[ur]^{c^+} & & \ar@{-}[ul]_{d^-} & & \ar@{-}[ur]^{c^-} & & \ar@{-}[ul]_{d^+}
}\notag
\end{align}
respectively.
\end{itemize}

We say that two walks $W,W'$ are \emph{non-crossing}, or they satisfy the \emph{non-crossing conditions}, if they are non-crossing at all maximal common subwalks and all intersecting vertices.

If in addition $W=W'$, we may specify that $W$ is \emph{self-non-crossing}.  An \emph{admissible walk} is a self-non-crossing (signed) walk which also satisfies the sign condition.  Denote by $\AW(\G)$\label{sym-AWG} the subset of $\SW(\G)$ consisting of all admissible walks.
\end{definition}

\begin{remark}\label{rmk-geometry}
While it is easy to see why (NC2) is called non-crossing, it is not apparent that why (NC1) is a non-crossing condition but not a sign condition, and why such signatures on half-edges around $v$ are required in (NC3) to make them non-crossing.  Although we will not use this representation of signed walks in this article, the correct graphical (geometrical) realisation is as follows.  When we go along the signed (half-)walk, say $(e_i,e_{i+1})\subset w$ with $i\neq 0$ and $i\neq m+1$, instead of visualising the situation as a line passing through a vertex:
\begin{align}
\xymatrix@C=40pt{ &  {v} \ar@{-}[l]_{\ol{e_i}} \ar@{-}[r]^{e_{i+1}}&}
\notag
\end{align}
we think of the vertex $v=s(e_{i+1})$ as lying below (resp. above) the line (relative to this presentation) if $\e_W(e_{i+1})=-$ (resp. $\e_W(e_{i+1})=+$).  

This ``correct" visualisation is in fact a generalisation of the technique used in \cite{KS,ST} - from a disc to compact orientable Riemann surfaces with marked point and boundaries (specified by $\G$).  In this geometric setting, our signed walks become certain type of curves, and the non-crossing condition translates into requiring a curve to have no self-intersection.  We hope to explain this geometric setting with more details in a sequel paper.
\end{remark}

Checking the non-crossing condition is in practice much easier than the way it is defined here - draw the walk around the geometric realisation of $\G$ in a non-crossing way and put signs on the edges to check (NC3).  We briefly explain how to check non-crossing-ness algorithmically here, and recommend the reader to carefully go through the proof of the next proposition in order to familiarise with the procedure.

First, fix some $w\in W$ and $w'\in W'$.  Start with a vertex, say $v=s(e_i)=s(e'_j)$, which is in both $w$ and $w'$.  In the special case of $W=W'$, one can simply start with a vertex which appears at least twice in (any) $w\in W$.  If the half-edges $\ol{e_{i-1}},e_i,\ol{e_{j-1}'},e_j'$ are pairwise distinct, then $v$ is an intersecting vertex with respect to the neighbourhoods given by these half-edges.
One can verify if the signed cyclic ordering required by (NC3) is satisfied simultaneously.
Otherwise, we have a common subwalk given by the coinciding half-edge, and one can expand this half-edge to some $z\in w\cap w'$ or $z\in w\cap \ol{w'}$.  Iterate this procedure for all possible pair of $i,j$ with $s(e_i)=s(e'_j)$.

We give an example on verifying the sign condition and the non-crossing condition:
\begin{example}\label{eg-walk1}
Let $\G$ be a ribbon graph whose geometric realisation is given on the left-hand side of the following picture.  For readability, we label the half-edges with numeral instead of letters and place the labelling next to their respective emanating vertex.
\begin{center}
\begin{picture}(300,120)
\put(0,105){$\xymatrix@C=20pt@R=16pt{
\circ\ar@{-}[rr]^{\ol{2}}\ar@{-}[rrrr]\ar@{-}[rd]^{3}\ar@{-}[rdrd] &&\ar@{-}[rr]^{2}&&\circ\ar@{-}[ldld]\\
&\ar@{-}[rd]^{\ol{3}}&&\ar@{-}[ru]^{\ol{1}}&\\
&&*+[Fo]{v}\ar@{-}[rd]^{\ol{6}}\ar@{-}[rdrd]\ar@{-}[ru]^{1}&&\\
&\ar@{-}[ru]^{4}&&\ar@{-}[rd]^{6}&\\
*+[Fo]{u}\ar@{-}[rr]_{5}\ar@{-}[ruru]\ar@{-}[ru]^{\ol{4}}\ar@{-}[rrrr]&&\ar@{-}[rr]_{\ol{5}}&&\circ
}$}
\put(270,68){\circle*{5}}
\put(270,68){\line(1,1){41}}
\put(220,110){\line(1,0){91}}
\put(220,110){\line(1,-1){110}}
\put(200,0){\line(1,1){60}}
\put(200,0){\line(1,0){130}}
\put(260,60){\line(-1,1){55}}
\put(205,116){\line(1,0){121}}
\put(215,5){\line(1,1){111}}
\put(215,5){\circle*{5}}
\multiput(270,68)(6,0){5}{\line(1,0){3}}
\multiput(215,5)(0,6){5}{\line(0,1){3}}
\end{picture}
\end{center}

Define $w_1$ as the following signed half-walk.
\begin{align}
w_{1}=(1^{+},2^{-},3^{+},\ol{6^{-}},\ol{5^{+}},\ol{4^{-}},\ol{3^{+}},\ol{2^{-}},\ol{1^{+}},4^{-}). \notag
\end{align}
Applying the involution to them we get:
\begin{align}
\ol{w_{1}}=(\ol{4^{-}},1^{+},2^{-},3^{+},4^{-},5^{+},6^{-},\ol{3^{+}},\ol{2^{-}},\ol{1^{+}}).\notag
\end{align}

The walk $W_1$ is shown on the right-hand side of the above picture. We represent the virtual edges as dashed lines to indicate their relative position with respect to the (augmented) cyclic ordering around $u$ and $v$.  From this picture, one then expects that $W_1$ is self-crossing.  We show how this can be checked properly.

We have the following sets:
\begin{align}
w_1\cap w_1 &= \{w_1\},\quad w_1\cap \ol{w_1}= \{z_1=(1,2,3), \ol{z_1}, z_2=(4), \ol{z_2}\},\notag
\end{align}
which gives us the following set of maximal common subwalks
\begin{align}
W_{1}\cap W_{1}&=\left\{\; W_{1},\; Z_1=\{ z_1, \ol{z_1} \},\; Z_2=\{ z_2,\ol{z_2} \}\; \right\}\notag.
\end{align}
It is easy to see that (NC1) is satisfied in all the cases.  In the above picture, $Z_1$ is represented by the overlapping upper triangle, whereas $Z_2$ is represented by the overlapping edge in the lower triangle.  Focusing on the adjacent half-edges around these overlapping parts, one can see that (NC2) does not hold at both $Z_1$ and $Z_2$.

We will be more precise here.  The neighbourhood cyclic orderings around $s(z_1)$ and $s(\ol{z_1})$ in the half-walk $z_1$ are 
\begin{align}
(1,4,\vir_{-}(1))_{v=s(z_1)} \quad \text{and} \quad (\ol{3},4,\ol{6})_{v=s(\ol{z_1})} \quad \text{respectively.}\notag 
\end{align}
For $z_2$, we have instead
\begin{align}
(4,1,\ol{3})_{v=s(z_2)}\quad \text{and}\quad (\ol{4},\vir_{+}(\ol{4}),5)_{u=s(\ol{z_2})}. \notag
\end{align}
Therefore, (NC2) does not hold in both cases - as we have claimed.

From the picture above, it is easy to see that $v$ is the only self-intersecting vertex, and we leave it for the reader to confirm the claim.  There are only four neighbourhoods of $v$ in $W_{1}$:
\begin{align}
\{ \vir_{-}(1), 1\}, \quad \{ 1,4\},\quad \{ \ol{3}, 4\}, \quad\text{and }\{ \ol{3}, \ol{6}\}. \notag
\end{align}

Then, to check (NC3), we only need to observe the three cases 
$\{ \vir_{-}(1), 1, \ol{3},\ol{6}\}$, $\{ \vir_{-}(1), 1, \ol{3},4\}$, and $\{ 1, 4, \ol{3},\ol{6}\}$.
The associating signed cyclic suborderings are given by 
\begin{align}
(\ol{3^{+}},\ol{6^{-}},\; \vir_{-}(1)^{-},1^{+})_{v},\;
(\ol{3^{+}},4^{-},\vir_{-}(1)^{-}, 1^{+},)_{v},\;\text{and }
(1^{+},\ol{3^{+}},4^{-},\ol{6^{-}})_{v} \text{ respectively}. \notag
\end{align}
Thus, while the first two cases comply with (NC3), the third one does not.
\end{example}

The key result of this section is the following one.

\begin{proposition}\label{type-odd}
Let $\G$ be a ribbon graph. Then the following are equivalent.
\begin{itemize}
\item[(1)] $\AW(\G)$ is a finite set.
\item[(2)] $\G$ consists of at most one odd cycle and no even cycle.
\end{itemize}
\end{proposition}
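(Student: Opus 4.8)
The plan is to prove the two implications separately. Both rest on two elementary consequences of the definition of a signature. First, since $\e_W(e)=\e_W(\ol{e})$ while consecutive half-edges of a signed half-walk get opposite signs, a signed half-walk can never immediately backtrack, i.e. $e_{i+1}\neq\ol{e_i}$. Second, because a path admits a proper $2$-colouring but an $n$-cycle does so only for even $n$, no signed walk can traverse an odd cycle of $\G$ ``more than once'', and in fact not even ``once around and one further edge''; dually, one \emph{can} go exactly once around an odd cycle (returning to the start vertex) while respecting the sign condition, precisely because $n$ is odd. Note also that, since a signature is a genuine function on half-edges, condition (NC1) holds automatically when $W=W'$; so for \emph{self}-non-crossing only (NC2) at maximal common subwalks and (NC3) at intersecting vertices are at stake.

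$(2)\Rightarrow(1)$. The graphs described in (2) are exactly the trees and the graphs obtained from a tree by adding one edge closing an odd cycle $C$. Let $W\in\AW(\G)$ with $w=(e_1,\dots,e_l)$. As $w$ never backtracks, its underlying sequence of edges is a non-backtracking edge-walk; since the only cycle of $\G$ is $C$ and $w$ cannot go round $C$ more than once, each edge of $C$ is used at most once by $w$ and each bridge at most twice, whence $l\le 2\,|E(\G)|$, a bound depending only on $\G$. Only finitely many half-walks have length at most $2|E(\G)|$, and each underlying walk carries at most two signatures, so $\AW(\G)\subseteq\SW(\G)$ is finite.

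$(1)\Rightarrow(2)$, contrapositive. Assume $\G$ is not as in (2). A short graph-theoretic argument then produces in $\G$ either (A) an even cycle, or (B) two odd cycles which are vertex-disjoint or share exactly one vertex: if the cycle rank of $\G$ is $1$, its unique cycle is even, giving (A); if the cycle rank is $\ge 2$, pick two distinct fundamental cycles $C_1,C_2$ for a spanning tree, and if either is even we are in (A); if both are odd and they meet in at least two vertices, then $C_1\cup C_2$ contains a subdivided theta graph, whose three path-lengths $a,b,c$ cannot make $a+b,b+c,c+a$ all odd, so $\G$ again contains an even cycle; otherwise $C_1,C_2$ realise (B). In case (A), let $C$ have length $2n$ with half-walk $(c_1,\dots,c_{2n})$, and for each $k\ge1$ let $w_k$ consist of $k$ full revolutions of $C$ followed by one further edge $c_1$, carrying the unique $2n$-periodic signature $\e_W(c_i)=(-1)^{i+1}$; this is a legitimate signed half-walk whose two endpoints $s(c_1)$ and $s(\ol{c_1})$ are distinct, so the sign condition is vacuous. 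For self-non-crossing: every passage of $w_k$ through a vertex $v_i$ of $C$ enters along $\ol{c_{i-1}}$ and leaves along $c_i$, so no vertex of $C$ becomes an intersecting vertex of $W_k$ with itself and (NC3) is vacuous, while every maximal common subwalk of $W_k$ with itself is a sub-arc of $C$, for which (NC2) reduces to a direct check in the cyclic ordering around a vertex of $C$ accounting for virtual edges. Hence $\{W_k:k\ge1\}$ is an infinite subset of $\AW(\G)$. In case (B), let $C_1,C_2$ be odd cycles meeting in at most one vertex and $P=(p_1,\dots,p_r)$ a shortest path from $V(C_1)$ to $V(C_2)$ (internally disjoint from $C_1\cup C_2$); for $k\ge1$ let $w_k$ go once around $C_1$, along $P$, once around $C_2$, back along $\ol{P}$, once around $C_1$, and so on, with $k$ traversals of $C_1$. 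Because going once around an odd cycle returns the running sign to its initial value, the constraints $\e_W(e_i)\ne\e_W(e_{i+1})$ are globally satisfiable and the sign condition holds; the self-non-crossing check is as in (A) — repeated passages through a given vertex of $C_1$, $C_2$ or $P$ always reuse the same pair of half-edges, so (NC3) is vacuous, and (NC2) along the sub-arcs forming the maximal common subwalks is again a cyclic-ordering verification. So $\AW(\G)$ is infinite.

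The main obstacle is the last ingredient in both (A) and (B): checking (NC2) explicitly for the families $\{W_k\}$. Conceptually this is immediate from the geometry of Remark \ref{rmk-geometry} — since $\G$ is an \emph{orientable} ribbon graph, $w_k$ is realised by an embedded curve that spirals along the annular neighbourhood of $C$ (case A), or weaves back and forth between the annular neighbourhoods of $C_1$ and $C_2$ along the bridge $P$ (case B), and such a curve has no self-intersection — but turning this into the combinatorial condition (NC2) requires careful index bookkeeping with the augmented cyclic orderings $(\vir_-(e),e,\vir_+(e),\dots)_v$ at the vertices of $C$ (and of $P$ in case B). A minor additional point is to choose the signature and the start/stop of each $w_k$ so as to meet the sign condition — in case (A) one must deliberately avoid stopping after a whole number of revolutions.
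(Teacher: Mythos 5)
Your overall strategy is the paper's: in one direction bound the length of signed walks to get finiteness of $\SW(\G)\supseteq\AW(\G)$, and in the other direction manufacture an infinite family of admissible walks from an even cycle or from a pair of odd cycles. The direction $(2)\Rightarrow(1)$, together with your graph-theoretic reduction (theta-graph argument) at the start of $(1)\Rightarrow(2)$, is sound and matches the paper. The gap is in the second half of $(1)\Rightarrow(2)$, and it is a genuine one: the admissibility of your families $\{W_k\}$ is asserted but not proved. You acknowledge that verifying (NC2) is ``the main obstacle'' and then defer it to the geometric picture of Remark \ref{rmk-geometry}; but the paper explicitly does not develop that picture here (it is postponed to a sequel), so it cannot carry the argument. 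The combinatorial verification — enumerating the maximal common subwalks of $W_k$ with itself and checking the neighbourhood cyclic orderings at both ends of each, plus the sign condition at coinciding endpoints — is exactly the content that constitutes the bulk of the paper's proof of this implication, and it is missing from yours.

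Moreover, your claim that (NC3) is vacuous in case (B) because ``repeated passages through a given vertex always reuse the same pair of half-edges'' is false precisely in the sub-case where the two odd cycles share a vertex $v$: there the alternating walk passes through $v$ sometimes with neighbourhood $\{\ol{e_m},f_1\}$ (switching from $C_1$ to $C_2$) and sometimes with $\{\ol{f_n},e_1\}$ (switching back), and these four half-edges are pairwise distinct, so $v$ \emph{is} an intersecting vertex and (NC3) must be checked. Whether it (and (NC2) at the common subwalks through $v$) holds depends on the cyclic ordering of $e_1,\ol{e_m},f_1,\ol{f_n}$ around $v$ — ribbon data that your purely graph-theoretic construction never fixes. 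This is why the paper distinguishes the two sub-cases $(e_1,\ol{f_n},\ol{e_m},f_1)_v$ and $(e_1,\ol{e_m},f_1,\ol{f_n})_v$, chooses the traversal directions of the cycles to realise the required orderings, and in the second sub-case replaces your uniform alternating pattern by a different walk (run forward $p$ times, then retrace backwards). Until the cyclic orderings are pinned down and the (NC2)/(NC3) checks actually carried out, the infinitude of $\AW(\G)$ is not established.
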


\begin{remark}
A cycle of length $n$ in $\G$ is an $n$-gon embedded in $\G$, i.e. there is a walk $(e_1,\ldots,e_k)$ in $\G$ with endpoints being the same, and no repeating vertices along the walk.  An odd cycle (resp. even cycle) is a cycle of odd (resp. even) length.  Note that a 1-gon (cycle of length 1) is a loop.  We denote a cycle in $\G$ by a sequence $\mathcal{C}=(v,E_1,v_1,E_2,\ldots, v_{k-1},E_k,v)$, so that $s(e_1)=v=s(\ol{e_k})$, $s(e_i)=v_{i-1}=s(\ol{e_{i+1}})$.
\end{remark}

For ease of exposition, regardless of whether the entries of a sequence are half-walks or half-edges, or a mixture of both, the sequence is understood as the half-walk given by concatenating all the data in the obvious way.

\begin{proof}[Proof of (2)$\Rightarrow$(1)]
Suppose $\G$ is a graph containing at most one odd cycle and no even cycle.
Since sign alternates as we go along a signed walk, the same edge never appears more than once.  In particular, the 
length $\ell$ of the sequence defining a signed walk is less than or equal to the number of edges in $\G$.  Moreover, for a given $\ell$, since there are only finitely many sequences (of half-edges) of length $\ell$, $\SW(\G)$ is finite.  Hence, the subset $\AW(\G)$ of $\SW(\G)$ is also finite.
\end{proof}

We will spend the rest of the section to show (1)$\Rightarrow$(2).
For better readability, we put all the labellings of half-walks and walks in bold face.

First note that if $\G$ does not satisfy the condition of (2), then we can assume it contains an even cycle $\mathcal{C}=(v,E_1,v_1,E_2,\ldots, v_{k-1},E_k,v)$, or it contains two odd cycles $\mathcal{C}=(u,E_{1},u_{1},E_{2},\ldots,u_{m-1},E_{m},u)$ and $\mathcal{C}'=(v,F_{1},v_{1},F_{2},\ldots,v_{n-1},F_{n},v)$.
We can choose the cycles with minimal length.
In the former case, this means that we can assume that none of the edges is a loop; otherwise, we get a smaller cycle or two odd cycles.
In the latter case, the two cycles are connected by a line $\mathcal{L}$ with endpoints $u,v$.
We can also assume that $\mathcal{L}$ is chosen with the minimal number of (non-loop) edges, and that it intersects $\mathcal{C}$ (resp $\mathcal{C}'$) at a single vertex $u$ (resp. $v$).
Note that we allow $\mathcal{L}$ to be just a single point (i.e. $u=v$).
The minimal length assumption implies that the vertices that are in $\mathcal{C}$ (resp. $\mathcal{C}'$) but not $\mathcal{L}$ does not appear twice in the cycle; otherwise, we get a cycle of shorter length. 
Let us be more precise now about these configurations.

\begin{enumerate}[(I)]
\item $\G$ has an even cycle $\mathcal{C}=(v,E_1,v_1,E_2,\ldots, v_{k-1},E_k,v)$ where none of the $E_i$'s is a loop.
$\mathcal{C}$ has the following geometric realisation in $\G$:
\begin{align}\notag
\xymatrix@R=12pt@C=12pt{\ar@{}[rrdd]|{\mathcal{C}}
v\ar@{-}[dd]_{E_{1}}&&v_{k-1}\ar@{-}[ll]_{E_{k}}\\
&&\\
v_{1}\ar@{--}[rr]&&v_{k-2}\ar@{-}[uu]_{E_{k-1}}
}\end{align}
We label the half-edges by $e_i$ so that $s(e_i)=v_{i-1}$.  

\item $\G$ has two odd cycles $\mathcal{C}=(u,E_{1},u_{1},E_{2},\ldots,u_{m-1},E_{m},u)$ and $\mathcal{C}'=(v,F_{1},v_{1},F_{2},\ldots,v_{n-1},F_{n},v)$; they are connected by a line $\mathcal{L}=(D_1, \ldots, D_l)$ of length $l\geq 0$ and with endpoints $u,v$.
$\mathcal{L}$ is assumed to have minimal length amongst all lines connecting $\mathcal{C}$ and $\mathcal{C}'$.
Apart from $u$ and $v$, all the vertices appearing in the union of these subgraphs are pairwise distinct.
This subgraph has one of the following geometric realisations:
\begin{align}
\xymatrix@R=12pt@C=18pt{\text{(II.a)} & u_1\ar@{--}@/^1pc/[rrdd]& & v_{1}\ar@{--}@/^1pc/[lldd] & \text{(II.b)} & \ar@{}[rdd]|{\mathcal{C}} 
u_{1}\ar@{--}@/_/[dd]&&&&&v_{n-1}\\
&& v\ar@{-}[lu]^{E_1}\ar@{-}[rd]_{E_m}\ar@{-}[ru]^{F_1}\ar@{-}[ld]_{F_n} & & & & u\ar@{-}[lu]_{E_{1}}\ar@{-}[ld]^{E_{m}}\ar@{-}[r]^{D_{1}}&\ar@{.}[r]&\ar@{-}[r]^{D_{l}}&v\ar@{-}[ru]^{F_{n}}\ar@{-}[rd]_{F_{1}}&\\
& v_{n-1} &  & u_{m-1} & & u_{m-1}&&&&&v_{1}\ar@{--}@/_/[uu] \ar@{}[luu]|{\mathcal{C}'}
}\notag
\end{align}
We label the half-edges as follows.  
For (II.a), we require that the cyclic ordering around $v$ is given by $(\pmb{e_1},\overline{\pmb{f_n}},\ol{\pmb{e_m}},\pmb{f_1})_v$.  
For (II.b), we require cyclic orderings $(\pmb{e_1},\ol{\pmb{e_m}},\pmb{d_1})_u$ and $(\pmb{f_1},\ol{\pmb{f_n}},\ol{\pmb{d_l}})_v$ if $l>0$, or $(\pmb{e_1},\ol{\pmb{e_m}},\pmb{f_1},\ol{\pmb{f_n}})_v$ if $l=0$.   The remaining labellings are then fixed (uniquely) in the way that the following signed half-walks in $\G$ can be defined:
\begin{align}
\pmb{d}:=(\pmb{d_{1}}^{-},\pmb{d_{2}}^{+},\ldots, \pmb{d_{l-1}}^{-\e},\pmb{d_{l}}^{\e}) \notag \\
\pmb{e}:=(\pmb{e_{1}}^{+},\pmb{e_{2}}^{-},\ldots, \pmb{e_{m-1}}^{-},\pmb{e_{m}}^{+}) \notag \\
\pmb{f}:=(\pmb{f_{1}}^{-\e},\pmb{f_{2}}^{\e},\ldots, \pmb{f_{n-1}}^{\e},\pmb{f_{n}}^{-\e}),\notag
\end{align}
where $\e=+$ in (II.a), and $\e$ is the sign of $(-1)^l$ in (II.b).
Here $\pmb{e}$ and $\pmb{f}$ are half-walks given by walking along respective cycles.
\end{enumerate}

To prove (1)$\Rightarrow$(2) of Proposition \ref{type-odd}, it suffices to show that there are infinitely many admissible walks in each of the three cases.

\medskip


\textbf{Case (I)}.
Let $\pmb{s}$ be the signed half-walk $(\pmb{e_{1}}^{+},\pmb{e_{2}}^{-},\ldots, \pmb{e_{k}}^{-})$.
For a positive integer $p$, we define a signed half-walk $\pmb{w}_{p} := (v^{(0)},\pmb{s},v^{(1)},\pmb{s},\ldots,\pmb{s},v^{(p)},\pmb{e_{1}}^{+},v_{1})$ with $v^{(i)}=v$ for each $i\in\{0,1\ldots,p\}$, i.e. the concatenation of $p$ copies $\pmb{s}$ and a copy of $\pmb{e_{1}}$.
We are going to show that its corresponding signed walk $\pmb{W}_p$ is admissible.

Let us look at some concrete examples to convince ourselves that why these signed walks should be admissible.
Suppose the length of $\mathcal{C}$ is 4.
Then $\pmb{W}_1$ and $\pmb{W}_2$ can be visualised as follows.
\begin{center}
\begin{picture}(200,60)(40,0)
\thicklines
\put(0,47){$\pmb{W}_1$:}
\put(40,55){\line(0,-1){50}}
\put(40,5){\line(1,0){50}}
\put(90,5){\line(0,1){50}}
\put(90,55){\line(-1,0){42}}
\put(48,55){\line(0,-1){45}}
\dashline[20]{5}(40,55)(30,30)
\dashline[20]{5}(48,10)(58,35)
\put(40,55){\circle*{4}}
\put(48,10){\circle*{4}}

\put(130,47){$\pmb{W}_2$:}
\put(170,55){\line(0,-1){50}}
\put(170,5){\line(1,0){58}}
\put(228,5){\line(0,1){50}}
\put(228,55){\line(-1,0){52}}
\put(176,55){\line(0,-1){45}}
\put(176,10){\line(1,0){45}}
\put(221,10){\line(0,1){40}}
\put(221,50){\line(-1,0){40}}
\put(181,50){\line(0,-1){36}}
\dashline[20]{5}(170,55)(160,30)
\dashline[20]{5}(181,14)(191,38)
\put(170,55){\circle*{4}}
\put(181,14){\circle*{4}}
\end{picture}
\end{center}
Clearly, this picture shows that $\pmb{W}_1,\pmb{W}_2$ are self-non-crossing.
The sign condition is clear by construction.

Let us show the admissibility more rigorously.
By construction, there is no self-intersecting vertex in $\pmb{W}_p$.
The set $\pmb{w}_{p}\cap\pmb{w}_{p}$ consists of $\pmb{w}_{p}$ and the following half-walks.
\begin{align}
\begin{array}{rrcl}
\pmb{z}_1: & (v^{(0)},\pmb{e_{1}}^{+},v_{1}) & = & (v^{(p)},\pmb{e_{1}}^{+},v_{1})\\
\pmb{z}_2: & (v^{(0)},\pmb{s},v^{(1)},\pmb{e_{1}}^{+},v_{1}) &=& (v^{(p-1)},\pmb{s},v^{(p)},\pmb{e_{1}}^{+},v_{1})\\
\pmb{z}_3: & (v^{(0)},\pmb{s},v^{(1)},\pmb{s},v^{(2)},\pmb{e_{1}}^{+},v_{1}) &=& (v^{(p-2)},\pmb{s},v^{(p-1)},\pmb{s},v^{(p)},\pmb{e_{1}}^{+},v_1)\\
\vdots & & \vdots &   \\
\pmb{z}_p: & (v^{(0)},\pmb{s},v^{(1)},\pmb{s},\ldots, \pmb{s},v^{(p-1)},\pmb{e_{1}}^{+},v_{1}) &=& (v^{(1)},\pmb{s},v^{(2)},\pmb{s},\ldots,\pmb{s}, v^{(p)},\pmb{e_{1}}^{+},v_{1})
\end{array}\notag
\end{align}
Since the set $\pmb{w}_{p}\cap\ol{\pmb{w}_{p}}$ is empty, $\pmb{W}_{p}\cap\pmb{ W}_{p}=\{\pmb{Z}_1,\ldots,\pmb{Z}_{p},\pmb{W}_{p}\}$.

Now we can see that the sign condition and the non-crossing conditions are satisfied at $\pmb{Z}_i$ for all $i\in\{1,\ldots,p\}$.  Hence, $\pmb{W}_{p}$ is admissible for all $p\geq 1$.  In particular, $\AW(\G)$ is an infinite set.
\medskip

\textbf{Case (II.a)}.
For any positive integer $p$, define the following signed half-walk:
\begin{align}
\pmb{w}_{p}:=(u^{(0)},\pmb{e},u^{(1)},\pmb{f},u^{(2)},\pmb{e},u^{(3)},\pmb{f},u^{(4)},\ldots,\pmb{e},u^{(2p-1)},\pmb{f},u^{(2p)},\pmb{e},u^{(2p+1)}),\notag
\end{align}
where $u^{(i)}=v$ for all $i\in\{0,1,\ldots,2p+1\}$.
Let us visualise $\pmb{W}_1, \pmb{W}_2$ in the case when $m=1=n$ (i.e. $\pmb{e}=e_1, \pmb{f}=f_1$).
For simplicity, further assume $\G$ has only two edges, then it can be embedded into a genus 1 surface, where we can visualise $\pmb{W}_1,\pmb{W}_2$ as follows.
The slanted lines represent the subwalk induced by $\pmb{e}$,  the horizontal lines represent the subwalk induced by $\pmb{f}$, and the dashed lines represent virtual edges (as in Example \ref{eg-walk1}).

\begin{center}
\begin{picture}(320,100)(10,0)
\thicklines

\put(0,90){$\pmb{W}_1$:}
\dottedline{3}(30,0)(30,100)(130,100)(130,0)(30,0)
\put(83,0){\vector(1,0){1}}
\put(83,100){\vector(1,0){1}}
\put(30,45){\vector(0,1){1}}
\put(30,62){\vector(0,1){1}}
\put(130,45){\vector(0,1){1}}
\put(130,62){\vector(0,1){1}}
\put(80,50){\circle*{4}}
\put(80,50){\line(1,5){10}}
\dashline[20]{5}(80,50)(95,70)
\dashline[20]{5}(80,50)(65,30)
\put(90,0){\line(1,5){10}}
\put(100,50){\line(1,0){30}}
\put(30,50){\line(1,0){30}}
\put(60,50){\line(1,5){10}}
\put(70,0){\line(1,5){10}}

\put(140,90){$\pmb{W}_2$:}
\dottedline{3}(170,0)(170,100)(320,100)(320,0)(170,0)
\put(243,0){\vector(1,0){1}}
\put(243,100){\vector(1,0){1}}
\put(170,45){\vector(0,1){1}}
\put(170,62){\vector(0,1){1}}
\put(320,45){\vector(0,1){1}}
\put(2320,62){\vector(0,1){1}}
\put(245,50){\circle*{4}}
\put(245,50){\line(2,5){20}}
\dashline[20]{5}(245,50)(265,70)
\dashline[20]{5}(245,50)(225,30)
\put(265,0){\line(2,5){18}}
\put(283,45){\line(1,0){37}}
\put(170,45){\line(1,0){55}}
\put(225,45){\line(2,5){22}}
\put(247,0){\line(2,5){22}}
\put(269,55){\line(1,0){51}}
\put(170,55){\line(1,0){37}}
\put(207,55){\line(2,5){18}}
\put(225,0){\line(2,5){20}}
\end{picture}
\end{center}
Clearly, the pictures show a self-non-crossing walk (in the ordinary sense), but we need to check the condition (NC3) more carefully.
Using the cyclic ordering $(\vir_-(e_1), e_1^+, \ol{f_1}^-, \vir_-(\ol{e_1}), \ol{e_1}^+, f_1^-)_v$, we can see that (NC3) is satisfied in the only vertex $v$ of $\G$.
Sign condition is clearly satisfied by construction.
Hence, we have $\pmb{W}_1, \pmb{W}_2$ being admissible. 

Let us now show that $\pmb{W}_p$ is admissible rigourously for all $p\geq 1$ and odd $m,n\geq 1$.
Since $v$ is the only vertex appearing in both $\mathcal{C}$ and $\mathcal{C}'$, it is the only self-intersecting vertex of $\pmb{W}_{p}$.
There are only four possible different neighbourhoods of $v$ in $\pmb{W}_{p}$, given by $v=u^{(x)}$ with $x\in I:=\{0,1,2,2p+1\}$.
In particular, it makes sense to say ``the neighbourhood of $u^{(x)}$", or ``the neighbourhood at $x$".

Consider $a,b\in I$ with $a<b$.
Now one can simply draw the neighbourhoods (like in the definition of (NC3)) to see that $v$ is a self-intersecting vertex with respect to the neighbourhoods at $a$ and $b$ if, and only if, $b-a$ is odd.
One can simultaneously check that the signed cyclic ordering in (NC3) holds by considering all such $(a,b)\neq(0,2p+1)$.
The case $(a,b)=(0,2p+1)$ gives coinciding endpoints of $\pmb{W}_p$, so we need to check the sign condition, which clearly holds.

To determine maximal common subwalks, (a generalisation of) our previous observation says that we only need to consider $a,b\in\{0,1,\ldots,2p+1\}$ with $a<b$ and $b-a$ even.
Set $\pmb{s}:=(\pmb{e},\pmb{f})$ so that we can write $\pmb{w}_p$ in the form
$(v^{(0)},\pmb{s},v^{(1)},\pmb{s},\ldots,\pmb{s},v^{(p)},\pmb{e},v^{(p+1)})$, 
where $v^{(i)}=u^{(2i)}$ for all $i\in\{0,1,\ldots,p\}$, and $v^{(p+1)}=u^{(2p+1)}$.

Consider the sequences $\pmb{z}_1,\ldots,\pmb{z}_p,\pmb{w}_p$ in Case (I).
We then replace the symbol $\pmb{e_1}$ by $\pmb{e}$, and replace also the symbol $v_1$ by $v^{(p+1)}$.
Now the resulting sequences of symbols becomes walk in the setting of Case (II).
These are precisely the maximal common subwalks of $\pmb{w}_p$ with itself, obtained by expanding $v^{(a)}=v^{(b)}$; they are all of the maximal common subwalks.
One can check that the corresponding (NC1) and (NC2) conditions hold.
This implies that $\AW(\G)$ is infinite.

\medskip
\textbf{Case (II.b).}
We define $\pmb{s}$ as the signed half-walk $(\pmb{d},\pmb{f},\ol{\pmb{d}})$.
For every positive integer $p$, we define the following signed half-walk $\pmb{w}_{p}$, and show that it is admissible.
\begin{align}
\pmb{w}_{p}:=& (u^{(0)},\pmb{e},u^{(1)},\pmb{s},u^{(2)},\pmb{e},u^{(3)},\pmb{s},\ldots, \pmb{s},u^{(2p)},\pmb{e},u^{(2p+1)}, \notag \\
& \quad \ol{\pmb{s}}, u^{(2p+2)}, \ol{\pmb{e}}, u^{(2p+3)}, \ol{\pmb{s}}, u^{(2p+4)}, \ol{\pmb{e}}, u^{(2p+5)}, \ldots, \ol{\pmb{s}}, u^{(4p)}, \ol{\pmb{e}}, u^{(4p+1)}  ) , \notag
\end{align}
where $u^{(i)}=u$ for any $i\in \{ 0,1,2,\ldots, 4p+1\}$.

For guiding examples, we visualise $\pmb{w}_1$ and $\pmb{w}_2$ in the case when $m=n=3, l=1$ as follows.
\[
\pmb{w}_1: \quad \footnotesize
\xymatrix@C=30pt@R=12pt@M=0pt{
\ar[ddddddd] & & & & & & & & &\ar[lllddd]\\
  & \ar[rrd]& & & & & & & &\\
  &   & \ar[ddd] & u^{(0)}\ar[llluu]\ar@{--}[luu] & & & & & \ar[dddd]& \\
  &   &    & u^{(2)}\ar[lu] & & & \ar[lll] &&& \\
  &   &    & u^{(3)}\ar[rrr]& & & \ar[rruu]& && \\
  &   & \ar[ru] & u^{(4)}\ar[lld]& & & \ar[lll]& && \\
  &\ar[uuuuu]& & u^{(1)}\ar[rrr] & & & \ar[rrrd] && \ar[llu]& \\
\ar[urrr] & & & & & & & & & \ar[uuuuuuu]
}
\]

\[\pmb{w}_2: \quad \footnotesize
\xymatrix@C=28pt@R=14pt@M=0pt{
\ar[ddddddddddd]& & & & & & & & & & & \ar[llllddd] \\
& \ar[rrrrd]&&&&&&&&& \ar[ddddddddd]& \\
&& \ar[ddddddd]&&& u^{(0)}\ar[llllluu]\ar@{--}[luu]& & & & & & \\
&&& \ar[rrd]&& u^{(2)}\ar[lllu]&& \ar[ll]& & \ar[lldd]&& \\
&&&& \ar[ddd]& u^{(7)}\ar[rr]&& \ar[rrruuu]& & & & \\
&&&&& u^{(4)}\ar[lu]&& \ar[ll]& \ar[ddd]& & & \\
&&&&& u^{(5)}\ar[rr]&& \ar[ru]&&&& \\
&&&& \ar[ru]& u^{(6)}\ar[lld]&& \ar[ll]& & & & \\
&&& \ar[uuuuu]&& u^{(3)}\ar[rr]&& \ar[rrd]& \ar[lu]& & & \\
&& \ar[rrru]&&& u^{(8)}\ar[lllld]&& \ar[ll]&& \ar[uuuuuu]& & \\
& \ar[uuuuuuuuu]&&&& u^{(1)}\ar[rr]&& \ar[rrrrd]&&& \ar[lllu]& \\
\ar[rrrrru]& & & & & & & & & & & \ar[uuuuuuuuuuu]
}
\]

Observe carefully that if one starts with a common vertex not equal to $u$, then one can always expand it to a maximal common subwalk which contains $u$.
Thus, in order to classify maximal common subwalks, and determine the neighbourhoods for which $u$ appears as an intersecting vertex, we start by comparing $u^{(r)}$ with $u^{(s)}$ for some $r,s\in \{ 0,1,\ldots,4p+1 \}$.
A subwalk containing $u=u^{(i)}$ is in one of the following forms:
\begin{center}
\begin{picture}(420,65)(0,0)

\thicklines

\put(15,5){$i\in \{ 0,4p+1\}$}
\put(55,35){\line(-1,1){15}}
\put(55,35){\circle*{4}}
\put(58,25){$u^{(i)}$}
\put(48,45){$e_{1}$}

\put(135,5){$i\in I_{1}\cup I_{2}$}
\put(140,35){\circle*{4}}
\put(140,35){\line(-1,-1){15}}
\put(140,35){\line(1,0){40}}
\put(180,35){\line(1,-1){15}}
\put(137,40){$u^{(i)}$}
\put(135,21){$\ol{e_{m}}$}
\put(175,22){$f_{1}$}

\put(245,5){$i\in J_{1}\cup J_{2}$}
\put(250,35){\circle*{4}}
\put(250,35){\line(-1,1){15}}
\put(250,35){\line(1,0){40}}
\put(290,35){\line(1,1){15}}
\put(247,23){$u^{(i)}$}
\put(243,45){$e_{1}$}
\put(285,44){$\ol{f_{n}}$}

\put(355,5){$i=2p+1$}
\put(360,35){\circle*{4}}
\put(360,35){\line(-1,-1){15}}
\put(360,35){\line(1,0){40}}
\put(400,35){\line(1,1){15}}
\put(355,21){$\ol{e_{m}}$}
\put(395,44){$\ol{f_{n}}$}
\put(357,40){$u^{(i)}$}

\end{picture}
\end{center}
where $I_{1}=\{ 1,3,\ldots, 2p-1\}$, $I_{2}=\{ 2p+2,2p+4,\ldots,4p\}$, $J_{1}=\{ 2,4,\ldots,2p\}$, and $J_{2}=\{ 2p+3, 2p+5,\ldots,4p-1 \}$.

The following list shows all possible intersecting vertices and maximal common subwalks obtained by extending a coinciding indexed vertex $u^{(r)}=u^{(s)}$.
\begin{enumerate}
\item \underline{$(r,s)\in \{ 0,4p+1 \}\times (I_{1}\cup I_{2})$}:, $u^{(r)}=u^{(s)}$ specifies an intersecting vertex.
\item \underline{$(r,s)\in \{ 0,4p+1\}\times \{2p+1\}$}: $u^{(r)}=u^{(s)}$ specifies an intersecting vertex.
\item \underline{$r,s\in\{0,4p+1\}$}: $u^{(r)}=u^{(s)}$ gives the coinciding endpoints of $\pmb{W}_p$; the induced maximal common subwalk is given by
\begin{align}
(u^{(0)}=u^{(4p+1)},\pmb{e},\ldots,\pmb{s},u^{(2p)}=u^{(2p+1)}) \in \pmb{w}_p\cap \ol{\pmb{w}_p}.\notag
\end{align}

\item \underline{$(r,s)\in \{0,4p+1\}\times J_{1}$}: The induced maximal common subwalk is given by
\begin{align}
(u^{(0)}=u^{(s)},\pmb{e},\ldots,\pmb{e},u^{(2p+1-s)}=u^{(2p+1)},\pmb{d}) \in \pmb{w}_p\cap \pmb{w}_p & \text{for }r=0;\notag \\
(u^{(4p+1)}=u^{(s)},\pmb{e},\ldots,\pmb{e},u^{(2p-s)}=u^{(2p+1)},\pmb{d}) \in \ol{\pmb{w}_p}\cap \pmb{w}_p & \text{for }r=4p+1;\notag
\end{align}

\item \underline{$(r,s)\in (\{0\}\times J_{2}) \cup (J_1\times \{2p+1\})$}:
For $(r,s)\in \{0\}\times J_2$, the induced maximal common subwalk is given by
\begin{align}
(u^{(0)}=u^{(s)},\pmb{e},\ldots,\pmb{s},u^{(s-2p-1)}=u^{(2p+1)}) \in \pmb{w}_p\cap \ol{\pmb{w}_p}.\notag
\end{align}

Note that $\{s-2p-1\mid s\in J_2\} = J_1\setminus\{2p\}$, so this is also the maximal common subwalk induced by $(r,s) \in (J_{1}\setminus \{2p\})\times \{2p+1\}$.
For $(r,s)=(2p,2p+1)$, observe that the other endpoint of the induced maximal common subwalk is given by $u^{(0)}=u^{(4p+1)}$, i.e. the case of (3) above.

\item \underline{$(r,s)\in (\{4p+1\}\times J_2) \cup (J_2\times \{2p+1\})$}:
For $(r,s)\in J_2 \times \{2p+1\}$, the induced maximal common subwalk is given by
\begin{align}
(u^{(r)}=u^{(2p+1)},\ol{\pmb{s}},\ldots,\ol{\pmb{e}},u^{(4p+1)}=u^{(6p+2-r)}) \in \pmb{w}_p\cap \pmb{w}_p.\notag
\end{align}
Note that $\{ 6p+2-r \mid r\in J_2 \} = J_2$, so this is also the same maximal common subwalk induced by $(r,s)\in \{4p+1\}\times J_2$.

\item \underline{$(r,s)\in (I_{1}\cup I_{2})\times \{2p+1\}$}: 
For $r\in I_1$, the induced maximal common subwalk is given by
\begin{align}
(u^{(0)}=u^{(2p+1-r)},\pmb{e},\ldots,\pmb{e},u^{(r)}=u^{(2p+1)},\pmb{d}) \in \pmb{w}_p\cap \pmb{w}_p.\notag
\end{align}
Similarly, if $r\in I_{2}$ instead, then the corresponding maximal common subwalk which is in $\ol{\pmb{w}_p}\cap \pmb{w}_p$ is given by the same sequence.

\item \underline{$(r,s)\in (I_{1}\cup I_{2})\times (J_{1}\cup J_{2})$}:
For such an $(r,s)$, the induced maximal common subwalk is given by
\begin{align}
(u^{(r)}=u^{(s)}, \pmb{d}).\notag
\end{align}
Note that in the case when $l=0$, $\pmb{d}$ is empty, which means that this maximal common ``subwalk" is just an intersecting vertex $u^{(r)}=u^{(s)}$; one checks easily that (NC3) is satisfied.

\item \underline{$(r,s)\in I_{1}\times I_{1}, J_1\times J_1$}:
When both $r,s$ are in $I_{1}$, assume without loss of generality that $r<s$, then the induced maximal common subwalk is given by
\begin{align}
(u^{(0)}=u^{(s-r)},\pmb{e},\ldots,\pmb{e},u^{(r-s+2p+1)}=u^{(2p+1)},\pmb{d}) \in \pmb{w}_p\cap \pmb{w}_p.\notag
\end{align}
Similarly, if $r,s\in J_{1}$, then the induced maximal common subwalks takes the same form.

\item \underline{$(r,s)\in I_{2}\times I_2, J_2\times J_2$}:
When both $r,s$ are in $I_2$, assume without loss of generality that $r<s$, then the induced maximal common subwalk is given by
\begin{align}
(u^{(2p+1)}=u^{(s-r+2p+1)}, \ol{\pmb{s}},\ldots,\ol{\pmb{e}},u^{(r-s+4p+1)}=u^{(4p+1)}) \in \pmb{w}_p\cap \pmb{w}_p.\notag
\end{align}
Similarly, if $r,s\in J_{2}$, then the induced maximal common subwalks takes the same form.

\item \underline{$(r,s)\in (I_1\times I_2)\cup (J_1\times J_2)$}:
If $r+s>4p+1$, then the induced maximal common subwalk takes the same form as (9); otherwise, it takes the same form as (10).

\end{enumerate}
Using the visualisation above, one can see that the sign and non-crossing conditions are satisfied at each of these cases.
Therefore, $\AW(\G)$ is an infinite set.

This finishes the proof of Proposition \ref{type-odd}.

\section{Homological and Algebraic Preliminaries} \label{sec-alg-hom-prelim}
\subsection{Tilting theory}\label{sec-tilting-theory}
We will work in the bounded homotopy category $\Kb(\proj\Lambda)$ in this subsection.
Without loss of generality, each indecomposable complex takes the form $T=(T^{i},d^{i})_{i\in\Z}$ where $d^i$ lies in the Jacobson radical of $\proj{\Lambda}$ for all $i$.

\begin{definition}
Let $T$ be a complex in $\Kb(\proj\Lambda)$.
\begin{enumerate}
\item We say that $T$ is \emph{pretilting} if it satisfies $\Hom_{\Kb(\proj\Lambda)}(T,T[n])=0$ for all non-zero integers $n$.
\item We say that $T$ is \emph{tilting} if it is pretilting and generates $\Kb(\proj\Lambda)$ by taking direct summands, mapping cones, and shifts.
\item A pretilting complex is said to be \emph{partial} if it is a direct summand of some tilting complex.
\end{enumerate}
We denote by $\tilt\Lambda$\label{sym-tilt} the set of isomorphism classes of basic tilting complexes of $\Lambda$.
\end{definition}

For a complex $T$ in $\Kb(\proj\Lambda)$,
the $n$-th term of $T$ is denoted by $T^n$.
A complex $T$ in $\Kb(\proj\Lambda)$ is called 
\begin{itemize}
\item  \emph{stalk} if it is of the form $(0\to T^n\to 0)$ for some $n\in\Z$;
\item \emph{$n$-term} if it is of the form $(0\to T^{-n+1}\to\cdots \to T^{-1}\to T^{0}\to 0)$ for a non-negative integer $n$. Note that this is different from simply requiring $T$ to be concentrated in $n$ consecutive degrees.
\end{itemize}

The set of basic tilting complexes up to isomorphism has a natural partial order.

\begin{definitiontheorem}\cite{AI,A}
Let $T$ and $U$ be tilting complexes of $\Lambda$.
We write $T\geq U$\label{sym-TgeqU} if $\Hom_{\Kb(\proj\Lambda)}(T,U[n])=0$ for any positive integer $n$.
Then $\geq$ induces a partial order structure on $\tilt\Lambda$.
Moreover, the set $\ntilt{n}\Lambda$\label{sym-ntilt} of $n$-term tilting complexes is precisely the interval $\{T\in\tilt\Lambda\mid\Lambda\geq T\geq \Lambda[n-1] \}$ in $\tilt\Lambda$.
\end{definitiontheorem}

We denote by $\ipt\Lambda$\label{sym-ipt} the set of isomorphism classes of indecomposable two-term pretilting complexes of $\Lambda$.
\begin{proposition}\label{prop-noCommonSummand}\label{b}\cite{AH, A}
Let $\Lambda$ be a symmetric algebra and $T$ be a two-term pretilting complex of $\Lambda$.
Then the following hold:
\begin{enumerate}[{\rm (i)}]
\item $T$ satisfies $\add T^0\cap \add T^{-1}=0$.
\item $T$ is partial tilting.  In fact, it is a direct summand of a two-term tilting complex.
\item It is (two-term) tilting if, and only if, the number of isomorphism classes of indecomposable direct summands of $T$ coincides with that of $\Lambda$ (i.e. $|T|=|\Lambda|$).
\end{enumerate}
\end{proposition}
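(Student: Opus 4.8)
The plan is to handle the three assertions in the order (i), (iii), (ii): (i) by a direct computation in $\Kb(\proj\Lambda)$, (iii) as a formal consequence of (ii), and (ii) --- which I expect to be the substantive part --- by combining a Bongartz-type completion valid over any finite dimensional algebra with the self-duality of $\Kb(\proj\Lambda)$ that is available when $\Lambda$ is symmetric.

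For (i), I would first reduce to the case in which the differential $d\colon T^{-1}\to T^0$ of $T$ lies in the radical of $\proj\Lambda$: passing to the minimal representative of $T$ (splitting off contractible summands $\mathrm{id}\colon P\to P$) does not change the conclusion, since what remains is still a two-term pretilting complex of the same homotopy type. A routine book-keeping of chain maps and null-homotopies then gives $\Hom_{\Kb(\proj\Lambda)}(T,T[1])\cong\Hom_\Lambda(T^{-1},T^0)/(\End_\Lambda(T^0)\,d+d\,\End_\Lambda(T^{-1}))$, and as $d$ is radical the subspace being quotiented out sits inside $\operatorname{rad}_\Lambda(T^{-1},T^0)$. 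If $P$ were a common indecomposable summand of $T^0$ and of $T^{-1}$, then the ``identity on $P$'' map $T^{-1}\twoheadrightarrow P\hookrightarrow T^0$ would fail to be radical, hence would give a nonzero element of the above quotient, contradicting $\Hom_{\Kb(\proj\Lambda)}(T,T[1])=0$. Thus $\add T^0\cap\add T^{-1}=0$.

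For (ii), the key point is that for a symmetric algebra $\Kb(\proj\Lambda)$ carries a natural duality $\Hom_{\Kb(\proj\Lambda)}(X,Y)\cong D\Hom_{\Kb(\proj\Lambda)}(Y,X)$, where $D=\Hom_\Bbbk(-,\Bbbk)$ (it is induced by the bimodule isomorphism $\Lambda\cong D\Lambda$), so that for a two-term complex $T$ one gets $\Hom_{\Kb(\proj\Lambda)}(T,T[-1])\cong D\Hom_{\Kb(\proj\Lambda)}(T,T[1])$. Since a two-term complex automatically satisfies $\Hom(T,T[n])=0$ for $|n|\ge2$, this says that among two-term complexes ``presilting'' coincides with ``pretilting'' and ``silting'' coincides with ``tilting''. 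Hence (ii) is equivalent to the Bongartz-type completion statement: every two-term presilting complex is a direct summand of a two-term silting complex. This holds over an arbitrary finite dimensional algebra, and I would either quote it from \cite{AH,A} or extract it from $\tau$-tilting theory \cite{AIR}. Concretely, one chooses a generating set $f_1,\dots,f_k$ of $\Hom_{\Kb(\proj\Lambda)}(T,\Lambda[1])$, forms the triangle $\Lambda\to E\to T^{\oplus k}\xrightarrow{(f_1,\dots,f_k)}\Lambda[1]$, observes that $E$ is again a two-term complex and that $T\oplus E$ generates $\Kb(\proj\Lambda)$, and then verifies $\Hom(T,E[1])=\Hom(E,T[1])=\Hom(E,E[1])=0$; the first and third vanishings are precisely what the universality built into the triangle provides.

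Finally, for (iii): if $T$ is two-term tilting then $|T|=|\Lambda|$, because $T$ induces a derived equivalence between $\Lambda$ and $\End_{\Kb(\proj\Lambda)}(T)$ (Rickard; cf.\ \cite{Ric,AI}), and $|{-}|$ is the rank of $K_0(\Kb(\proj-))$, which is a derived invariant. Conversely, if $T$ is two-term pretilting with $|T|=|\Lambda|$, then by (ii) it is a direct summand of a basic two-term tilting complex $T'$; writing $T'=T\oplus U$ (with $T$ and $U$ having no common indecomposable summand) yields $|\Lambda|=|T'|=|T|+|U|=|\Lambda|+|U|$, so $U=0$ and $T=T'$ is tilting. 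The main obstacle is clearly part (ii): the reduction ``two-term silting implies tilting over a symmetric algebra'' is soft, but constructing the Bongartz complement and checking its self-orthogonality --- in particular $\Hom(E,E[1])=0$ --- is the real content, and is where one must either carry out the universal-extension argument carefully or lean on the cited literature.
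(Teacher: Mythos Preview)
The paper does not supply a proof of this proposition at all; it is stated with the citation \cite{AH,A} and used as a black box.  So there is no ``paper's own proof'' to compare against.

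Your proposal is correct and is essentially the standard argument one finds in the cited literature.  A couple of small comments: in (i) your computation of $\Hom_{\Kb(\proj\Lambda)}(T,T[1])$ as $\Hom_\Lambda(T^{-1},T^0)/(\End_\Lambda(T^0)\,d+d\,\End_\Lambda(T^{-1}))$ is exactly right (the sign from the shift is absorbed into the subspace), and the conclusion follows.  In (ii), the point you flag as ``the real content'' --- namely $\Hom(E,E[1])=0$ --- is in fact easy once you have the other vanishings: applying $\Hom(-,E[1])$ to the triangle $\Lambda\to E\to T^{\oplus k}\to\Lambda[1]$ gives an exact sequence with outer terms $\Hom(T^{\oplus k},E[1])=0$ (already shown) and $\Hom(\Lambda,E[1])=0$ (since $E[1]$ lives in degrees $-2,-1$), so no extra work is needed.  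The symmetry hypothesis enters precisely where you say, to identify presilting with pretilting via $\Hom(X,Y)\cong D\Hom(Y,X)$.  Part (iii) is fine as written.
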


\subsection{Brauer graph algebras and their modules} \label{sec-BGA}
For convenience, we say that a half-edge $e$ is \emph{truncated} if $\sigma(e)=e$ and $\mm(s(e))=1$.
\begin{definition}\label{def-BGA}
Let $\G$ be a Brauer graph whose multiplicity function is denoted by $\mm$.  

If $\G$ is the connected graph (tree) with one edge $E=\{e,\ol{e}\}$ and two vertices (endpoints of $E$), i.e. $\circ - \circ$, with multiplicity $\mm\equiv 1$, then we define the Brauer tree algebra of $\G$ as $\Bbbk[x]/(x^2)$.
Otherwise, we define the Brauer graph algebra $\Lambda_\G$\label{sym-LambdaG} of $\G$ by giving its quiver $Q_\G$\label{sym-QG} and relations as follows.
\begin{itemize}
\item The set of vertices of $Q_\G$ is defined as the set of edges in $\G$.  For an edge $E=\{e,\ol{e}\}$ in $\G$, we denote the trivial path corresponding to $E$ by $1_E$\label{sym-e0}.
\item If $e$ is not truncated, then there is an arrow from $E':=\{\sigma(e),\ol{\sigma(e)}\}$ to $E=\{e,\ol{e}\}$, denoted by $(e|\sigma(e))$\label{sym-a-e-sig-e}.
\end{itemize}

In general, suppose $e':=\sigma^k(e)$ for some $k\leq\val(s(e))$ such that $e$ is not truncated, we define $(e|e')$\label{sym-a-ef} to be the path $(e|\sigma(e))(\sigma(e)|\sigma^2(e))\cdots(\sigma^{k-1}(e)|e')$.
We call this path \emph{short} if $k\lneqq \val(s(e))$.  
The path $(e|e)$ will be called a \emph{Brauer cycle}.
For a truncated half-edge $e$, we define the Brauer cycle $(e|e)$ to be $(\ol{e}|\ol{e})^{\mm(s(\ol{e}))}$. 
For better readability, sometimes we write $[e]$\label{sym-a-ee} instead of $(e|e)$.
We identify $[e]^0$ and $[\ol{e}]^0$ with $1_{\{e,\ol{e}\}}$.
Note that our notation $(e'|e)$ goes in the opposite direction (from right to left) compare to the one in \cite{KS}.

The relations of $\Lambda_\G$ are generated by the following three types of \emph{Brauer relations}.
\begin{itemize}
\item[(Br1)] If both $e$ and $\ol{e}$ are not truncated, then $[e]^{\mm(s(e))}-[\ol{e}]^{\mm(s(\ol{e}))}=0$.
\item[(Br2)] $(\sigma^{-1}(e)|e)(\ol{e}|\sigma(\ol{e}))=0$ for any $e$ if both $(\sigma^{-1}(e)|e)$ and $(\ol{e}|\sigma(\ol{e}))$ are defined in $Q_\G$.
\item[(Br3)] If $\ol{e}$ is truncated, then  $[e]^{\mm(s(e))}(e|\sigma(e))=0$.
\end{itemize}
\end{definition}
\begin{remark}
Recall our convention of taking right modules and identifying maps with arrows of the Ext-quiver.  It says that a short path $(e'|e)$ can be regarded as a map $P_{\{e,\ol{e}\}}\to P_{\{e',\ol{e'}\}}$ given by multiplying with $(e'|e)$ on the left.
Consequently, we call such a map a \emph{short map}.
\end{remark}

It is well-known that every Brauer graph algebra is a symmetric special biserial algebra and vice versa; see \cite{Sch}.
In particular, an indecomposable non-projective module of a Brauer graph algebra falls into either one of the two sub-classes, called string modules and band modules; this is a result of \cite{WW}, who call them representations of the first and second kind respectively.
For reason to be clear later (cf. Lemma \ref{lem-non-zero-hom}), we will not give any more details about band modules, and concentrate only on the string modules.

The usual convention of defining string modules is to use the so-called string combinatorics of special biserial algebras.
It is not necessary to know string combinatorics in order to understand the statements of our result (Theorem \ref{mainbij}) if we define string modules homologically as follows.
Thus, details about string combinatorics will be left entirely in Section \ref{sec-proof}.
The equivalence between the following definition and the one given by string combinatorics is explained in \cite[Sec 3]{WW}.

\begin{definition}\label{def-string-mod}
An indecomposable non-projective $\Lambda_\G$-module $M$ is a \emph{string module} if its minimal projective presentation $P_M=(P_M^{-1}\xto{d_M}P_M^0)$\label{sym-PM}\label{sym-dM} can be written in one of the following forms:
\begin{center}
(1) $\xymatrix@R=3pt@C=50pt{
P_{F_1} \ar[rd]|{d_{1,1}} & \\
 &  P_{E_1}\\
P_{F_2} \ar@{.}[dd]\ar[ru]|{d_{1,2}}\ar[rd]|{d_{2,2}}&\\
 & P_{E_2} \ar@{.}[dd]\\
P_{F_{n-1}}\ar[rd]|{d_{m,n-1}} &  \\
 & P_{E_m} \\
P_{F_n} \ar[ru]|{d_{m,n}} &
}$ (2)$\xymatrix@R=3pt@C=50pt{
P_{F_1} \ar[rd]|{d_{1,1}} & \\
 & P_{E_1}\\
P_{F_2} \ar@{.}[ddd]\ar[ru]|{d_{1,2}}\ar[rd]|{d_{2,2}}&\\
 & P_{E_2} \ar@{.}[d]\\
 & P_{E_{m-1}}\\
P_{F_n}\ar[ru]|{d_{m-1,n}}\ar[rd]|{d_{m,n}} &  \\
 & P_{E_m} 
}$ (3)$\xymatrix@R=3pt@C=50 pt{
 &  P_{E_1}\\
P_{F_1} \ar[rd]|{d_{2,1}} \ar[ru]|{d_{1,1}} & \\
& P_{E_2} \ar@{.}[dd]\\
P_{F_2} \ar@{.}[dd]\ar[ru]|{d_{2,2}}&\\
 & P_{E_{m-1}}\\
P_{F_n}\ar[ru]|{d_{m-1,n}}\ar[rd]|{d_{m,n}} &  \\
 & P_{E_m} }$
\end{center}
with each $d_{i,j}$ given by a left-multiplication of a path from $F_j$ to $E_i$.
If a complex of projective $\Lambda_\G$-modules takes one of the forms above, or is an indecomposable stalk complex concentrated in degree 0 or $-1$, then it is called a \emph{two-term string complex.}
\end{definition}
\begin{remark}\label{rmk-stringmod}
(i) The definition is presented with a ``chosen direction" to keep it short.  In practice (as well as in our forthcoming proofs), one can reorder $E_i$'s and $F_j$'s ``upside-down", i.e. swap $E_i$ with $E_{m-i+1}$ and swap $F_j$ with $F_{n-j+1}$.  Under such reordering, the diagram of (2) is reflected along the horizontal axis in the middle.

(ii) We can write $d_M$ as a bidiagonal matrix $(d_{i,j})_{i,j}$.  Moreover, if the defining path of $d_{i,j}$ is $[e]^k(e|f)$, then it follows from the Brauer relations that the defining path of the non-zero component $d_{i\pm 1,j}$ (resp. $d_{i,j\pm 1}$) is of the form $[e']^{k'}(e'|\ol{f})$ (resp. $[\ol{e}]^{k''}(\ol{e}|f')$).

(iii) We always assume that the minimal projective presentation of a string module takes one of these three forms. 
\end{remark}

The authors of \cite{BM} introduced the notion of homotopy strings and homotopy bands, which they use to parameterise the indecomposable objects in the bounded derived category of a gentle algebra.
One can define an analogue of homotopy strings for a Brauer graph algebra, then two-term string complexes are examples of complex associated to these combinatorial objects.
Since we will only study two-term complexes in this paper, we will drop the adjective ``two-term" for string complexes.

\section{Two-term (pre)tilting complexes via ribbon combinatorics}\label{sec-twotilt}
Let $\G$ be a Brauer graph and $\Lambda:=\Lambda_{\G}$ be the associated Brauer graph algebra.
In this section, we study the relationship between two-term tilting complexes and walks.

Let $T$ be an indecomposable complex in $\Kb(\proj\Lambda)$.
If the homology of $T$ is non-trivial only at two consecutive degrees $n,n+1$ and $T$ is indecomposable, then $T$ is isomorphic to some shifts of a two-term complex given by the minimal projective presentation of an indecomposable $\Lambda$-module $H^{n+1}(T)$.
Dually, $T$ is also isomorphic to some shifts of the minimal injective (co)presentation of an indecomposable $\Lambda$-module $H^n(T)$.
In particular, we will assume that every indecomposable two-term non-stalk complex in $\Kb(\proj\Lambda)$ takes such a form.

\begin{definition}\label{def-maximal-map}
Suppose $T=(T^{-1}\xto{d}T^0)$ is a two-term complex in $\Kb(\proj\Lambda)$ such that $T=P_M$ (hence, $d=d_M=(d_{i,j})_{i,j}$) for a string module $M$. Then we call $d$ (resp. $T$, resp. $M$) a \emph{short string} map (resp. complex, resp. module) if every non-zero $d_{i,j}$ is a short map between indecomposable projective modules.
We denote by $\gcx\Lambda$\label{sym-gcx} the set of indecomposable stalk complexes of projective modules concentrated in degree 0 or $-1$, and two-term short string complexes  $T:=(T^{-1}\xto{d}T^{0})$ which satisfy $\add T^{0}\cap \add T^{-1}=0$.
\end{definition}

Let $w=(e_1,\ldots, e_n; \e)$ be a signed half-walk.
By definition, two consecutive half-edges in $w$ have different signs, 
and so we can define a two-term complex $T_w:=(T^{-1}\xto{d} T^0)$\label{sym-Tw}, where
\begin{align}
T^{-1}:=\bigoplus_{\e(e_{b})=-}P_{\{e_{b},\ol{e_b}\}},\;\; & \;\;
T^{0}:=\bigoplus_{\e(e_a)=+}P_{\{e_{a},\ol{e_a}\}} \notag \\
d=(d_{a,b})_{a,b} \text{ with }& d_{a,b}:=
\begin{cases}
(e_a|\ol{e_{b}})&\text{if }b=a-1;\\
(\ol{e_a}|e_b)&\text{if }b=a+1;\\
0&\textnormal{otherwise.}
\end{cases}\notag
\end{align}
The following properties are almost immediate from the construction.
\begin{lemma}\label{lem-TW-properties}
\begin{itemize}
\item[(1)] $T_{w}=T_{\ol{w}}$ holds.  In particular, for a signed walk $W=\{w,\ol{w}\}$, we can define $T_W:=T_w=T_{\ol{w}}$\label{sym-TW}.
\item[(2)] $T_{W}$ is in $\gcx\Lambda$. 
\end{itemize}
\end{lemma}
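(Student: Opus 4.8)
The statement splits into two independent assertions: first, that $T_w = T_{\ol w}$ for any signed half-walk $w$, and second, that $T_W$ lies in $\gcx\Lambda$. Both should follow by unwinding the definition of $T_w$ given just before the lemma, so the ``proof'' is really a bookkeeping exercise; the only mild subtlety is tracking how the maps $d_{a,b}$ transform under the involution $\ol{\,\cdot\,}$, and checking that the defining paths of the entries of $d$ really are \emph{short} maps (not Brauer cycles) so that $T_W$ qualifies as a short string complex.

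\textbf{Step 1: $T_w = T_{\ol w}$.} Write $w = (e_1,\dots,e_n;\e)$, so that $\ol w = (\ol{e_n},\dots,\ol{e_1})$ with the induced signature $\e_W(\ol{e_i}) = \e_W(e_i)$. Re-index $\ol w$ as $(f_1,\dots,f_n)$ with $f_k = \ol{e_{n+1-k}}$. Since $P_{f_k} = P_{\ol{e_{n+1-k}}} = P_{e_{n+1-k}}$ (same projective attached to the edge $E_{n+1-k}$), the degree-$0$ and degree-$(-1)$ terms of $T_{\ol w}$ coincide with those of $T_w$ as direct sums, via the order-reversing bijection $k \mapsto n+1-k$. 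For the differential, I compare $d_{k,k-1}^{\ol w} = \alpha_{f_k,\ol{f_{k-1}}}$ with the matching entry of $d^w$: under $f_k = \ol{e_{n+1-k}}$ and $\ol{f_{k-1}} = e_{n+2-k}$, this is $\alpha_{\ol{e_{n+1-k}},\,e_{n+2-k}}$, which is exactly $d^w_{n+2-k,\,n+1-k}$ (the ``$b = a+1$'' case with $a = n+2-k$, $b = n+1-k$). The ``$b=a+1$'' entries of $\ol w$ match the ``$b = a-1$'' entries of $w$ symmetrically. Hence $d^{\ol w}$ is obtained from $d^w$ by the simultaneous permutation of rows and columns given by the order-reversal, i.e. $T_{\ol w}$ and $T_w$ are the same complex up to reordering summands; so $T_w = T_{\ol w}$ in $\Kb(\proj\Lambda)$. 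Consequently $T_W := T_w$ is well-defined.

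\textbf{Step 2: $T_W \in \gcx\Lambda$.} I must check the two defining conditions of $\gcx\Lambda$ from Definition \ref{def-maximal-map}: that $T_W$ is a two-term short string complex, and that $\add T^0 \cap \add T^{-1} = 0$. For the latter: if some $P_{e_a}$ with $\e(e_a) = +$ were isomorphic to some $P_{e_b}$ with $\e(e_b) = -$, then $E_a = E_b$, so the half-edge $e_a$ equals $e_b$ or $\ol{e_b}$; but a signed walk has alternating signs along consecutive entries and (being a walk) does not repeat an edge with a consistent sign in a way that would force $\e(e_a) = \e(e_b)$ — more precisely, since $\e_W$ is a well-defined function on the half-edges appearing in $W$ with $\e_W(e) = \e_W(\ol e)$, two summands $P_{e_a}$, $P_{e_b}$ coming from the same edge carry the same sign, so they cannot sit on opposite sides. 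For the ``short string complex'' part: the shape of the bidiagonal matrix $(d_{a,b})$ — nonzero only when $|a-b| = 1$ — matches one of the three forms in Definition \ref{def-string-mod} (which of the three depends on the signs $\e(e_1)$ and $\e(e_n)$ at the two ends), and each nonzero entry $d_{a,b}$ is by construction $\alpha_{e_a,\ol{e_{b}}}$ or $\alpha_{\ol{e_a},e_b}$, a path going once partway around a single Brauer cycle; because $s(e_{b+1}) = s(\ol{e_b})$ in a half-walk and consecutive half-edges are distinct, this path is \emph{proper} (it does not wrap all the way around), hence a short map, not a Brauer cycle. Thus $d$ is a short string map, $T_W = (T^{-1}\xto{d}T^0)$ is a two-term short string complex, and together with the disjointness of $\add T^0$ and $\add T^{-1}$ this places $T_W$ in $\gcx\Lambda$.

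\textbf{Main obstacle.} There is no deep obstacle — the content is entirely definitional. The one place that requires genuine care is verifying in Step 2 that every entry of $d$ is a \emph{short} path rather than a full Brauer cycle, i.e. that the walk structure genuinely prevents $\sigma^{k}$ from closing up; this uses that a half-walk has distinct consecutive half-edges and the defining relation $s(e_{i+1}) = s(\ol{e_i})$, and I would spell it out carefully since the entire notion of $\gcx\Lambda$ hinges on ``short''. A secondary point worth stating explicitly is the case analysis matching the bidiagonal shape of $d$ to the three forms (1), (2), (3) of Definition \ref{def-string-mod} according to the end-signs, invoking Remark \ref{rmk-stringmod}(i) to normalise orientation.
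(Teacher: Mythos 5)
Your proof is correct and follows the same (purely definitional) route as the paper, which disposes of (1) as ``clear by construction'' and of (2) by observing that the entries $d_{a,b}$ are short maps (so $T_W$ is a minimal projective presentation of a short string module) and that reappearing edges in a signed walk carry the same sign (so $\add T^0\cap\add T^{-1}=0$); your extra care about why the paths cannot close up into Brauer cycles — which really rests on the signature forcing $e_a\neq\ol{e_{a-1}}$ — is exactly the right point to spell out. The only blemish is a harmless index slip in Step 1: $\alpha_{\ol{e_{n+1-k}},\,e_{n+2-k}}$ is the entry $d^w_{n+1-k,\,n+2-k}$ (the ``$b=a+1$'' case with $a=n+1-k$), not $d^w_{n+2-k,\,n+1-k}$; the conclusion that $d^{\ol w}$ equals $d^w$ after simultaneously reversing the order of rows and columns is unaffected.
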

\begin{proof}
(1) This is clear by construction. 

(2) This is clear for $w=(e^\pm)$ for any half-edge $e$.  Otherwise, since every $d_{a,b}$ is a short map, the complex $T^{-1}\xto{d}T^0$ is a minimal projective presentation of the (short string) module $H^0(T)$.  
Recall that a signed walk is a walk equipped with a signature.
By the Definition \ref{def-walks} (4) of signature, we must have $\e_W(e_i)=\e_W(\ol{e_i})=\e_W(e_j)=\e_{W}(\ol{e_j})$ for any half-edges $e_i=e_j$ in $W$.
Hence, the follows from the construction of $T_w$ that $\add T^{-1}\cap\add T^0=0$.
\end{proof}

\begin{lemma}\label{lem-SW-biject}
The map $\SW(\G)\to\gcx(\Lambda)$ given by $W\mapsto T_W$ is bijective.
\end{lemma}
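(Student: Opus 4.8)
The plan is to produce an explicit inverse $\Phi\colon\gcx\Lambda\to\SW(\G)$ to the map $W\mapsto T_W$, which already lands in $\gcx\Lambda$ by Lemma~\ref{lem-TW-properties}. On the stalk complexes there is no freedom: the stalk $P_E$ in degree $0$ (resp.\ degree $-1$) is $T_W$ only for the length-one signed walk $W=(E^{+})$ (resp.\ $W=(E^{-})$), so $\Phi$ is forced there; this part also uses that $T_{(e^{+})}=T_{(\ol e^{+})}$ because $P_e=P_{\ol e}$, as observed right after Lemma~\ref{lem-TW-properties}.

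For a short string complex $T=(T^{-1}\xto{d}T^{0})$, I would read the signed half-walk directly off the bidiagonal differential $d=(d_{i,j})$. By Definition~\ref{def-maximal-map}, $T=P_M$ for a string module $M$, so by Definition~\ref{def-string-mod} and Remark~\ref{rmk-stringmod} the indecomposable summands of $T^{0}\oplus T^{-1}$ line up into a single zigzag $Q_1,Q_2,\dots,Q_\ell$ with consecutive $Q_a,Q_{a+1}$ in opposite homological degrees and joined by exactly one nonzero component of $d$, which is a short map. Such a short map is, up to a nonzero scalar, of the form $\alpha_{x,y}$ for a uniquely determined pair of half-edges $x,y$ around a common vertex, with $\{x,\ol x\}$ and $\{y,\ol y\}$ the edges indexing the two summands; it enters the degree-$0$ summand through $x$ and leaves the degree-$(-1)$ summand through $y$. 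Setting $\e$ to be $+$ on the degree-$0$ summands and $-$ on the degree-$(-1)$ ones, one can then pick half-edges $e_1,\dots,e_\ell$ with $P_{e_a}=Q_a$ so that the component of $d$ joining $Q_a,Q_{a+1}$ is $\alpha_{e_{a+1},\ol{e_a}}$ when $Q_{a+1}$ lies in degree $0$ and $\alpha_{\ol{e_a},e_{a+1}}$ when it lies in degree $-1$. Here Remark~\ref{rmk-stringmod}(ii)---the constraints the Brauer relations impose on neighbouring components of a bidiagonal differential---is what guarantees that this choice exists and is unique up to reversing the entire zigzag; in particular $s(e_{a+1})=s(\ol{e_a})$ for all $a$, so $w:=(e_1^{\e(e_1)},\dots,e_\ell^{\e(e_\ell)})$ is a signed half-walk, and I set $\Phi(T):=W=\{w,\ol w\}$. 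The only ambiguity, reversing the zigzag, is precisely the passage from $w$ to $\ol w$, so $W$ is well defined; one also invokes the uniqueness up to isomorphism of minimal projective presentations so that $\Phi$ depends only on the isomorphism class of $T$.

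It then remains to check $T_{\Phi(T)}=T$ and $\Phi(T_W)=W$, which is bookkeeping: the construction of $T_w$ reproduces exactly the summands $Q_a=P_{e_a}$ together with the components $\alpha_{e_{a+1},\ol{e_a}}$, $\alpha_{\ol{e_a},e_{a+1}}$ recorded above, and conversely, by Lemma~\ref{lem-TW-properties}(2), $T_w$ is the minimal projective presentation of $H^{0}(T_w)$ with differential of exactly that bidiagonal shape, so running $\Phi$ returns $w$ up to reversal, i.e.\ returns $W$. The main obstacle is the well-definedness of $\Phi$ on short string complexes: one must verify that the half-\emph{edges} $e_a$, and not merely the edges $\{e_a,\ol{e_a}\}$, are forced by $T$ and glue into a genuine half-walk, and that the scalar and automorphism ambiguity in recording $d$ from $T$ affects neither which half-edges occur nor how they are linked. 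This is exactly the point at which Remark~\ref{rmk-stringmod}(ii) does the real work.
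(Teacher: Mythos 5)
Your proposal is correct and follows essentially the same route as the paper: both construct the inverse explicitly by reading the half-edges off the short (bidiagonal) differential, using Remark~\ref{rmk-stringmod}(ii) to see that the choice of half-edge representatives is forced once one component is fixed, identifying the residual ambiguity with the involution $w\mapsto\ol w$, and using $\add T^0\cap\add T^{-1}=0$ to place the signature. The only difference is notational (you interleave the summands into one zigzag $Q_1,\dots,Q_\ell$ where the paper keeps separate indices $e_i$, $f_j$ for the two degrees).
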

\begin{proof}
We prove this by finding the inverse of the map.  We define first a map from $\gcx(\Lambda)\to \SW(\G)$ given by $T\mapsto W_T=\{w_T,\ol{w_T}\}$.  Note that it is sufficient to define just one half-walk $w_T$ along with signatures on the half-edges in $w_T$.

If $T$ is a stalk complex $P_E[s]$ with $s\in\{0,1\}$, then take $w_T=(e)$ with signature being the sign of $(-1)^s$.
If $T=(\oplus_{j=1}^n P_{F_j} \xto{d} \oplus_{i=1}^m P_{E_i})$ is not a stalk complex, then it is a minimal projective presentation of a short string module $M=H^0(T)$.
It follows from Remark \ref{rmk-stringmod}(ii) that we can choose the half-edge representatives $e_1,f_1$ of $E_1,F_1$ respectively, so that $d_{1,1}=(e_1|\ol{f_1})$ when $M$ is of the form (1) or (2) in Definition \ref{def-string-mod}, or $d=(\ol{e_1}|f_1)$ when $M$ is of the form (3) in Definition \ref{def-string-mod}.
Moreover, these determine canonical representatives $e_i,f_j$ so that $d_{1,2}=(\ol{e_1}|f_2)$, $d_{2,2}=(e_2|\ol{f_2})$, $\ldots$, (resp. $d_{2,1}=(e_2|\ol{f_1})$, $d_{2,2}=(\ol{e_2}|f_2)$, $\ldots$) so on and so forth.
We can now write down a half-walk $(f_1,e_1,f_2,\ldots)$ (resp. $(e_1,f_1,e_2,\ldots)$).
Observe that this half-walk is uniquely defined up to applying the involution $\ol{\,\cdot\,}$.
Since $\add T^0\cap \add T^{-1}=0$, one can define a signature $\e$ on $w_T$ by assigning $\e(e_i)=+$ and $\epsilon(f_j)=-$.

Since a short map $(e|\ol{f})$ corresponds uniquely to an ordered pair $(e,\ol{f})$ of distinct half-edges which are incident to the same vertex, the assignment above gives a well-defined injective map.  Lemma \ref{lem-TW-properties} (2) implies that $W\mapsto T_W$ is the inverse of this map.
\end{proof}

The following says that the bijection given above can be improved to give a combinatorial model which describes the indecomposable two-term pretilting complexes.
\begin{lemma}
Every non-stalk indecomposable two-term pretilting complex $T:=(T^{-1}\xto{d}T^{0})$ is a short string complex.
In particular, $\ipt\Lambda$ is a subset of $\gcx\Lambda$.
\end{lemma}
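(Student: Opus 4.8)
The goal is to show that every non-stalk indecomposable two-term pretilting complex $T=(T^{-1}\xto{d}T^0)$ is a short string complex; since indecomposable two-term pretilting complexes have $\add T^0\cap\add T^{-1}=0$ by Proposition \ref{prop-noCommonSummand}(i), membership in $\gcx\Lambda$ then follows immediately once we know $T$ is a short string complex. The plan is to first reduce to understanding the possible shapes of $d$, and then to rule out everything except the three string shapes of Definition \ref{def-string-mod} with every entry a short map.

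First I would recall that $\Lambda$ is symmetric special biserial, so every indecomposable non-projective module is a string or band module. I would argue that the band modules (and their shifts/presentations) are excluded: a two-term complex whose homology is concentrated in two degrees is (a shift of) the minimal projective presentation of $H^0(T)$, and one checks — this is where Lemma \ref{lem-non-zero-hom} is the intended tool, exactly as flagged in the introduction — that a band module $M$ satisfies $\Hom(P_M,P_M[1])\neq 0$, hence $P_M$ is not pretilting. So $H^0(T)$ must be a string module, which forces $d=d_M$ to have the bidiagonal shape of one of the three forms in Definition \ref{def-string-mod}, with each nonzero $d_{i,j}$ a left-multiplication by \emph{some} path $\alpha_{e,f}$ (a priori possibly long, i.e. involving a power of a Brauer cycle).

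The substantive step is then to show each such path is in fact \emph{short}. Suppose some entry $d_{i,j}$ is given by a path of the form $[e]^k(e|f)$ with $k\ge 1$ (including the extreme case where it is a full Brauer cycle or larger). Using Remark \ref{rmk-stringmod}(ii), the neighbouring nonzero entries in the bidiagonal matrix are then forced to be of the form $[e']^{k'}(e'|\ol f)$ and $[\ol e]^{k''}(\ol e|f')$; chasing how a power of a Brauer cycle appears as a factor of $d_{i,j}$, I would produce an explicit nonzero degree-one self-extension of $T$ — concretely, a map $T^0\to T^{-1}[1]$, i.e. a map $P_{E_i}\to P_{F_j}$ compatible with the differentials and not null-homotopic — built from the "complementary" power of the same Brauer cycle and the relevant $(-|-)$ paths, using the Brauer relations (Br1)--(Br3) to check it commutes with $d$ and is not a boundary. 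This contradicts $\Hom_{\Kb(\proj\Lambda)}(T,T[1])=0$. Hence every $d_{i,j}$ is short, and $T$ is a short string complex.

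The main obstacle is precisely this last step: verifying that a non-short entry in the differential genuinely produces a nonzero self-extension, and doing the homotopy bookkeeping carefully in each of the three configurations of Definition \ref{def-string-mod} (and at the two ends, where truncated half-edges and the relations (Br3) enter). I expect the band-module exclusion to be quick given Lemma \ref{lem-non-zero-hom}, and the reduction to bidiagonal shape to be essentially a citation of the string-module structure, so the weight of the argument is in the short-versus-long analysis of the paths defining $d$.
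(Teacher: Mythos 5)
Your overall strategy coincides with the paper's: reduce to a string-shaped differential, then derive a contradiction with pretilting-ness from any non-short entry by exhibiting a nonzero morphism from $T$ to a shift of itself. The preliminary reductions you sketch (band modules excluded because $\Omega^2M\cong M$ together with Lemma \ref{lem-non-zero-hom}; the bidiagonal shape coming from the string-module structure of $H^0(T)$) are exactly the standing conventions the paper relies on, so that part is fine and arguably spelled out more carefully than in the paper itself.

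The key step, however, is both left unexecuted and, as described, inconsistent. A map $P_{E_i}\to P_{F_j}$ is a component $T^0\to T^{-1}$ and therefore represents a class in $\Hom_{\Kb(\proj\Lambda)}(T,T[-1])$, not a ``degree-one self-extension'': a class in $\Hom(T,T[1])$ is represented by a map $T^{-1}\to (T[1])^{-1}=T^0$, i.e. by a map $P_{F_j}\to P_{E_i}$ going in the \emph{same} direction as $d$. Your complementary-Brauer-cycle candidate naturally lives in $\Hom(T,T[-1])$ — which also vanishes for a pretilting complex, so the contradiction is still available — but there the chain-map condition is not automatic: you must verify both $\beta d=0$ and $d\beta=0$, and the exponent has to be chosen as $\mm(v)-k$ (one more than the power whose composite with $d_{a,b}$ is the socle, which is nonzero). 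This can all be pushed through with (Br1)--(Br3), but it is precisely the bookkeeping you flag as the obstacle. The paper sidesteps it entirely: writing $d_{a,b}=[e_a]^k(e_a|\ol{f_b})$ with $k\geq 1$ (possible since $E_a\neq F_b$ by Proposition \ref{prop-noCommonSummand}), it takes $\alpha\colon T\to T[1]$ to be the single component $(e_a|\ol{f_b})\colon P_{F_b}\to P_{E_a}$. The chain-map condition is then vacuous because $(T[1])^0=0$, and $\alpha$ cannot be null-homotopic, since that would force $h\cdot[e_a]^k(e_a|\ol{f_b})=(e_a|\ol{f_b})$ for some $h\in\End(P_{E_a})$, impossible for $k\geq 1$. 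If you correct the direction and replace your candidate map by the short map itself, your argument becomes the paper's.
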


\begin{proof}
Suppose $T^{-1}=\bigoplus_{j=1}^n P_{F_j}$ and $T^0=\bigoplus_{i=1}^m P_{E_i}$.  
Since $T$ is pretilting,
it follows that $\add T^0\cap \add T^{-1}=0$ by Proposition \ref{prop-noCommonSummand}.  In particular, we have $E_i\neq F_j$ for all $i,j$.

Suppose on the contrary that $d_{a,b}:P_{F_b}\to P_{E_a}$ is not short.  As $E_a\neq F_b$, we have a short map $(e_a\,\vline\,\ol{f_b})$.  Let $\alpha:T\to T[1]$ be a map of complexes given by mapping $P_{F_b}$ in $T^{-1}$ to $P_{E_a}$ in $T^0=(T[1])^{-1}$ via $(e_a\,\vline\,\ol{f_b})$.  Non-shortness of $d_{a,b}$ means that we can write it as $[e_a]^k(e_a\,\vline\,\ol{f_b})$ for some $k\geq 1$.  It is easy to see that $\alpha$ is not null-homotopy, or one will have $h\cdot[e_a]^k(e_a\,\vline\,\ol{f_b})=(e_a\,\vline\,\ol{f_b})$ for some $h\in\Hom(P_{\{e_a,\ol{e_a}\}},P_{\{e_a,\ol{e_a}\}})\cong [e_a]^0\Lambda_\G[e_a]^0$, which is not possible.  We now have a non-zero map $\alpha\in\Hom_{\Kb(\proj\Lambda_\G)}(T,T[1])$, contradicting the pretilting-ness of $T$.
\end{proof}

We define one more notion before stating the first main result.
\begin{definition}[Admissible set of signed walks]
A set $\W$ of signed walks is \emph{admissible} if for any pair of (not necessarily distinct) walks $W$ and $W'$ in $\W$, they are non-crossing and satisfy the sign condition.
In particular, admissibility of $W$ is equivalent to that of $\W=\{W\}$.
An admissible set is called \emph{complete} if any admissible set containing $\W$ is $\W$ itself.
Denote by $\CW(\G)$\label{sym-CWG} the set of all complete admissible sets of signed walks.
\end{definition}

For a set $\W$ of signed walks, the map $W\mapsto T_W$ induces a map $\W\mapsto T_\W$, where $T_\W$ is the multiplicity-free (possibly infinite) direct sum of complexes $T_W$ over all $W\in \W$.

\begin{theorem}\label{mainbij}
Let $W,W'$ be signed walks and $T_W, T_{W'}$ be their corresponding two-term complexes (via Lemma \ref{lem-SW-biject}).
Then $T_W\oplus T_{W'}$ is pretilting if and only if $\{W,W'\}$ is admissible.
In particular, 
\begin{itemize}
\item $W\mapsto T_W$ defines a bijection $\AW(\G)\to\ipt\Lambda_\G$, and 
\item $\W\mapsto T_\W$ defines a bijection $\CW(\G)\to\ttilt\Lambda_\G$.
\end{itemize}
\end{theorem}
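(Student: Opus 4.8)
The plan is to derive everything from a single homological input: a combinatorial criterion for when $\Hom_{\Kb(\proj\Lambda_\G)}(T_W,T_{W'}[1])$ vanishes. I first record the reduction of pretilting-ness to this group. Since $\Lambda_\G$ is symmetric, $\Hom_{\Kb(\proj\Lambda_\G)}(-,-)$ has symmetric $\Bbbk$-dimensions on $\Kb(\proj\Lambda_\G)$; combined with the shift isomorphism and the fact that two-term complexes admit no $\Hom$ in degrees $\pm n$ for $n\geq 2$, this shows that $T_W\oplus T_{W'}$ is pretilting if and only if all four groups $\Hom_{\Kb(\proj\Lambda_\G)}(T_a,T_b[1])$ with $a,b\in\{W,W'\}$ vanish. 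Taking $a=b$ recovers the self-orthogonality of $T_W$ and of $T_{W'}$, while the remaining two groups correspond to the two orderings of the pair $\{W,W'\}$.

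The core step is then to prove the combinatorial identification: $\Hom_{\Kb(\proj\Lambda_\G)}(T_W,T_{W'}[1])=0$ if and only if $W$ and $W'$ satisfy the sign condition and are non-crossing at every maximal common subwalk and every intersecting vertex. Writing $T_W=(T_W^{-1}\xrightarrow{d_W}T_W^0)$ and $T_{W'}=(T_{W'}^{-1}\xrightarrow{d_{W'}}T_{W'}^0)$, a morphism $T_W\to T_{W'}[1]$ in $\Kb(\proj\Lambda_\G)$ is just a homomorphism $\phi\colon T_W^{-1}\to T_{W'}^0$, and $\phi$ is null-homotopic exactly when it lies in the $\Bbbk$-span of the images of $d_{W'}\circ(-)$ and $(-)\circ d_W$; so the group in question is an explicit quotient of $\Hom_{\Lambda_\G}(T_W^{-1},T_{W'}^0)$. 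Lemma~\ref{lem-non-zero-hom} (the $\tau$-tilting input of \cite{AIR}) identifies non-vanishing of this quotient with the presence of a distinguished nonzero homomorphism between the string modules $H^0(T_W)$ and $H^0(T_{W'})$, and, since every component of $d_W$ and $d_{W'}$ is a short map, such a homomorphism forces a continuous overlap of the half-walks underlying $W$ and $W'$. One then argues both directions. For ``only if'', from any violation of the sign condition, of (NC1)/(NC2) at a maximal common subwalk, or of (NC3) at an intersecting vertex, one writes down a morphism $T_W\to T_{W'}[1]$ supported at the overlapping region and checks, via the short-map description of homotopies, that it is not null-homotopic. For ``if'', from a non-null-homotopic morphism one locates the indecomposable summands of $T_W^{-1}$ and $T_{W'}^0$ between which it is nonzero and reconstructs, from the short maps, a maximal common subwalk or an intersecting vertex at which one of the conditions fails; here the conventions attaching the virtual edges $e_0,e_{m+1}$ and accounting for virtual edges in cyclic orderings are exactly what make the ``the overlap reaches an endpoint'' subcases uniform with the interior ones. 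As the displayed combinatorial condition is symmetric in $W$ and $W'$, it is equivalent to $\Hom_{\Kb(\proj\Lambda_\G)}(T_{W'},T_W[1])=0$ as well, so, together with the first paragraph, $T_W\oplus T_{W'}$ is pretilting if and only if $\{W,W'\}$ is admissible.

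Granting the core step, the two bijections follow formally. By Lemma~\ref{lem-SW-biject}, $W\mapsto T_W$ is injective with image $\gcx\Lambda_\G$, and each $T_W$ is indecomposable, being an indecomposable projective stalk or the minimal projective presentation of a string module; moreover, by the lemma recording $\ipt\Lambda_\G\subseteq\gcx\Lambda_\G$, every indecomposable two-term pretilting complex equals $T_W$ for some $W\in\SW(\G)$. Applying the core step with $W=W'$, such $T_W$ is pretilting, equivalently lies in $\ipt\Lambda_\G$, if and only if $W$ is admissible; this gives $\AW(\G)\xrightarrow{\ \sim\ }\ipt\Lambda_\G$. For the second bijection, let $\W$ be a complete admissible set. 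The core step applied to each pair of walks in $\W$ shows that every finite sub-sum of $\bigoplus_{W\in\W}T_W$ is two-term pretilting, hence, by Proposition~\ref{prop-noCommonSummand}, partial tilting with at most $|\Lambda_\G|$ indecomposable summands; therefore $|\W|\leq|\Lambda_\G|<\infty$ and $T_\W$ itself is a genuine, multiplicity-free, two-term pretilting complex. If $T_\W$ were a proper direct summand of a two-term pretilting complex, the extra indecomposable summand would be $T_{W_0}$ for some $W_0\in\AW(\G)\setminus\W$ with $\W\cup\{W_0\}$ admissible (again by the core step), contradicting completeness; hence $T_\W$ is a maximal two-term pretilting complex, so $|T_\W|=|\Lambda_\G|$ and $T_\W$ is two-term tilting by Proposition~\ref{prop-noCommonSummand}. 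Conversely, decomposing any two-term tilting complex into indecomposables and invoking the first bijection produces an admissible set of walks which is complete, since enlarging it would yield a two-term pretilting complex with more than $|\Lambda_\G|$ summands. These assignments are mutually inverse, giving $\CW(\G)\xrightarrow{\ \sim\ }\ttilt\Lambda_\G$.

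The main obstacle is the core step, and within it the ``if'' direction: reading a genuinely violated non-crossing or sign condition off an arbitrary non-null-homotopic morphism is delicate, because one morphism may overlap the two walks along several regions simultaneously, and one must isolate a region that truly obstructs admissibility rather than one that gets absorbed into a homotopy. This case analysis, together with the virtual-edge bookkeeping that keeps the endpoint cases aligned with the generic ones, is what occupies Section~\ref{sec-proof}.
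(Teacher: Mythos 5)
Your overall architecture is the same as the paper's: reduce pretilting-ness of $T_W\oplus T_{W'}$ to the vanishing of the four degree-one Hom-spaces, establish a combinatorial criterion for $\Hom_{\Kb(\proj\Lambda_\G)}(T_W,T_{W'}[1])=0$, and then deduce the two bijections formally from Lemma \ref{lem-SW-biject} and Proposition \ref{prop-noCommonSummand}. Your first and third paragraphs are correct and essentially match the paper's proof of the theorem. The problem is that your second paragraph -- the ``core step'' -- is asserted rather than proved, and that step is where all the mathematical content of Section \ref{sec-proof} lives. Saying that from a violated condition ``one writes down a morphism supported at the overlapping region and checks it is not null-homotopic,'' and that from a non-null-homotopic morphism ``one reconstructs a maximal common subwalk or an intersecting vertex at which one of the conditions fails,'' is a statement of the goal, not an argument. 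What is missing is the actual dictionary: the paper proves it by splitting nonzero maps $M_{T}\to N_{T'}$ into those factoring through a uniserial module (Lemma \ref{lem-L-uniserial}, whose four cases (U2)(i)--(iv) account for failures of (NC1), (NC3) and the sign condition) and those factoring through a non-uniserial string module (Lemma \ref{lem-L-caseC}, whose condition (L2) accounts for failures of (NC2)), with Lemmas \ref{lem-M-struc} and \ref{lem-N-struc} supplying the translation between cyclic suborderings and tops/socles of $M$ and $N$, and Crawley-Boevey's basis theorem for homomorphisms between string modules guaranteeing that every nonzero map is detected this way. Without some version of this case analysis your equivalence is unsupported.

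One further point is not merely cosmetic: you describe the output of Lemma \ref{lem-non-zero-hom} as a homomorphism ``between the string modules $H^0(T_W)$ and $H^0(T_{W'})$.'' The lemma actually identifies $\Hom(T_{W'},T_W[1])\neq 0$ with $\Hom_\Lambda(H^0(T_W),H^{-1}(T_{W'}))\neq 0$, i.e.\ one compares the cokernel of one differential with the \emph{kernel} (equivalently $\Omega^2$ of the cokernel) of the other. This asymmetry is exactly why the $+$-signed half-edges index the top of $M_T$ while the $-$-signed half-edges index the socle of $N_{T'}$, and it is what makes the signature enter the non-crossing conditions at all; analysing $\Hom(H^0(T_W),H^0(T_{W'}))$ instead would give the wrong criterion. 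So even as a plan, the core step needs to be stated with the correct pair of cohomologies before the combinatorial analysis can be set up.
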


We give a few easy consequences of the theorem before proving it.

\begin{proposition}\label{2tilt}
Let $\G$ be a Brauer graph.
\begin{enumerate}[(1)]
\item If $\G$ has $n$ edges and $\W$ is a complete admissible set of signed walks of $\G$, then $|\W|=n$.

\item Each edge in $\G$ appears in at least one signed walk in a complete admissible set.

\item If $\G'$ is another Brauer graph with the same underlying ribbon graph as $\G$, then there is an isomorphism between the partially ordered sets $\ttilt{\Lambda_\G}$ and $\ttilt{\Lambda_{\G'}}$.
\end{enumerate}
\end{proposition}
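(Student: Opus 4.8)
The plan for parts (1) and (2) is to read them off from Theorem~\ref{mainbij}. Let $\W\in\CW(\G)$ be a complete admissible set, so that $T_\W=\bigoplus_{W\in\W}T_W$ is a basic two-term tilting complex of $\Lambda_\G$. Each member $W$ of $\W$ is itself admissible, hence lies in $\AW(\G)$, so $T_W$ is indecomposable; moreover $W\mapsto T_W$ is injective by Theorem~\ref{mainbij}, whence $|T_\W|=|\W|$. Since $T_\W$ is tilting, Proposition~\ref{prop-noCommonSummand}(iii) gives $|T_\W|=|\Lambda_\G|$, which is the number of vertices of $Q_\G$, that is, the number $n$ of edges of $\G$; this is (1). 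For (2), note from the construction of $T_W$ that an indecomposable projective $P_E$ is a direct summand of $T_W^0\oplus T_W^{-1}$ if and only if the edge $E$ lies in $W$. Writing $X:=\bigoplus_{W\in\W}(T_W^0\oplus T_W^{-1})$, an edge $E$ therefore appears in some member of $\W$ precisely when $P_E\in\add X$. The full subcategory of $\Kb(\proj\Lambda_\G)$ consisting of objects all of whose terms lie in $\add X$ contains $T_\W$ and is closed under shifts, finite direct sums, direct summands, and mapping cones; hence it contains the thick subcategory generated by $T_\W$, which is all of $\Kb(\proj\Lambda_\G)$ as $T_\W$ is tilting. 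In particular $P_E[0]$ belongs to it for every edge $E$, forcing $P_E\in\add X$; this is (2).

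For (3), the essential point is that the combinatorial side of Theorem~\ref{mainbij} depends only on the underlying ribbon graph: signed (half-)walks, the sign condition, the non-crossing conditions, admissibility, and completeness are all defined purely in terms of the datum $(V,H,s,\ol{\,\cdot\,},\sigma)$, with no reference to the multiplicity function $\mm$. Thus if $\G$ and $\G'$ have the same underlying ribbon graph then $\AW(\G)=\AW(\G')$ and $\CW(\G)=\CW(\G')$, and composing the two bijections of Theorem~\ref{mainbij} yields a bijection $\Phi\colon\ttilt\Lambda_\G\to\ttilt\Lambda_{\G'}$ which carries $T_\W$, formed over $\G$, to $T_\W$, formed over $\G'$. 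It then remains to check that $\Phi$ is an isomorphism of posets, i.e. that for $\W,\W'\in\CW(\G)=\CW(\G')$ the relation $T_\W\ge T_{\W'}$ holds over $\G$ exactly when it holds over $\G'$. Since all complexes here are two-term, $T\ge U$ is equivalent to $\Hom_{\Kb(\proj\Lambda)}(T,U[1])=0$ (the higher groups vanish for degree reasons), and this holds for $T=T_\W$, $U=T_{\W'}$ if and only if $\Hom_{\Kb(\proj\Lambda)}(T_W,T_{W'}[1])=0$ for every $W\in\W$ and $W'\in\W'$. So the whole of (3) reduces to the claim that, for any two signed walks $W,W'$, whether $\Hom_{\Kb(\proj\Lambda_\G)}(T_W,T_{W'}[1])$ vanishes is independent of $\mm$.

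To establish this claim I would revisit the Hom computations in the proof of Theorem~\ref{mainbij}. From the standard identification
\[
\Hom_{\Kb(\proj\Lambda_\G)}(T_W,T_{W'}[1])\ \cong\ \frac{\Hom_{\Lambda_\G}(T_W^{-1},T_{W'}^{0})}{d_{W'}\,\Hom_{\Lambda_\G}(T_W^{-1},T_{W'}^{-1})+\Hom_{\Lambda_\G}(T_W^{0},T_{W'}^{0})\,d_W},
\]
with $d_W,d_{W'}$ the (short) differentials, one sees that even though each Hom-space between indecomposable projectives grows with $\mm$, the classes that can survive in this subquotient are represented by short paths recording the ways $W$ and $W'$ ``overlap'', and the collection of such surviving classes can be read off from the cyclic orderings and the signatures alone. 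Put differently, the argument establishing ``$T_W\oplus T_{W'}$ is pretilting $\iff\{W,W'\}$ is non-crossing'' in fact refines to a one-directional, multiplicity-free criterion for the vanishing of $\Hom(T_W,T_{W'}[1])$, and once this refinement is in hand (3) follows at once. The main obstacle is exactly this refinement: the Hom-spaces between projectives are decidedly not multiplicity-independent, so one has to check carefully --- presumably by reworking the bookkeeping in the proof of Theorem~\ref{mainbij} --- that the vanishing of the relevant subquotient nonetheless is, i.e. that the ``directed non-crossing'' data detecting it never uses $\mm$.
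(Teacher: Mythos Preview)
Your arguments for (1) and (2) are correct.  For (2) you use a thick-subcategory argument in place of the paper's Grothendieck-group argument; both are standard and equally short.

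For (3), your plan is the right one, but you have overstated the remaining difficulty.  The ``one-directional, multiplicity-free criterion'' you say would need to be extracted by ``reworking the bookkeeping'' is already in the paper as Proposition~\ref{lem-hom-to-comb}(1): for $T,T'\in\gcx\Lambda_\G$ one has $\Hom_{\Kb(\proj\Lambda_\G)}(T',T[1])=0$ if and only if neither condition (U2) nor condition (L2) holds for the associated signed walks $W_T,W_{T'}$.  Conditions (U2) and (L2) are phrased entirely in terms of half-edges, the cyclic orderings $\sigma$, signatures, and maximal common subwalks, so they depend only on the underlying ribbon graph and not on $\mm$.  Thus the vanishing of $\Hom(T_W,T_{W'}[1])$ is multiplicity-independent for each pair $(W,W')$, and your reduction immediately yields that $\Phi$ is an order isomorphism.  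This is exactly how the paper completes the proof at the end of Section~\ref{sec-proof}: it cites Proposition~\ref{lem-hom-to-comb}(1) and observes that it is independent of multiplicity.  So there is no obstacle---you just need to invoke that proposition rather than redo the Hom-space analysis from scratch.
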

\begin{proof}
(1): By Theorem \ref{mainbij}, $T_\W$ is a tilting complex, which implies $|T_\W|=|\Lambda_\G|=n$.

(2): This follows from the fact that a tilting complex induces a basis of the Grothendieck group $K:=K_0(\Kb(\proj\Lambda_\G))$, and 
$K$ is a free abelian group with canonical basis given by the isoclasses of projective indecomposable $\Lambda_\G$-modules, which is in bijection with edges of $\G$.

(3): The set-wise bijection is immediate from Theorem \ref{mainbij}.  The isomorphism on the partial order structure will be shown at the end of Section \ref{sec-proof}.
\end{proof}

\section{Proof of Theorem \ref{mainbij}}\label{sec-proof}
In this section, we give a proof of Theorem \ref{mainbij}.
Let $T:=(T^{-1}\xto{d}T^{0})$ be a two-term complex in $\Kb(\proj\Lambda)$.
We define two modules $M_{T}$\label{sym-MT} and $N_{T}$\label{sym-NT} as follows:
\begin{align}
M_{T}:= H^{0}(T),\ N_{T}:=H^{-1}(T)=
\begin{cases}
\Omega^2 M_{T}, &\textnormal{if $T^{0}\neq 0$};\\
T^{-1}, &\textnormal{if $T^{0}=0$}.
\end{cases}\notag
\end{align}

The following lemma is the central trick of our proof. 
\begin{lemma}\label{lem-non-zero-hom}
Let $\Lambda$ be a symmetric algebra.
Let $T$ and $T'$ be indecomposable two-term complexes in $\Kb(\proj\Lambda)$.
Then $\Hom_{\Kb(\proj\Lambda)}(T',T[1])=0$ if and only if $\Hom_{\Lambda}(M_{T},N_{T'})=0$.
\end{lemma}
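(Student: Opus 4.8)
The plan is to identify $\Hom_{\Kb(\proj\Lambda)}(T',T[1])$ with a quotient of $\Hom_\Lambda(T'^{-1},T^0)$, and then reinterpret that quotient homologically as $\Hom_\Lambda(M_T,N_{T'})$. The starting point is the standard description of morphisms in the homotopy category for two-term complexes: a chain map $T'\to T[1]$ is precisely an element $f\in\Hom_\Lambda(T'^{-1},T^0)$ (since $T[1]$ is concentrated in degree $-1$, with $(T[1])^{-1}=T^0$ and $(T[1])^0=0$, and $T'^0$ maps to zero), subject to no further chain-map condition coming from $T$'s differential shifted, and two such chain maps are homotopic exactly when $f$ factors as $d\circ h$ for some $h\in\Hom_\Lambda(T'^{-1},T^{-1})$ plus the piece factoring through $d'$. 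So $\Hom_{\Kb(\proj\Lambda)}(T',T[1])\cong \Hom_\Lambda(T'^{-1},T^0)\big/\big(d_*\Hom_\Lambda(T'^{-1},T^{-1}) + (d')^*\Hom_\Lambda(T'^0,T^0)\big)$. I would write this out carefully, being mindful of the first-quadrant vs. $T^0=0$ or $T'^0=0$ degenerate cases, where the statement should still hold with $N_{T'}=T'^{-1}$.

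Next I would bring in the homological interpretation of each factor. Apply $\Hom_\Lambda(-,T^0)$ to the presentation $T'^{-1}\xrightarrow{d'}T'^0\to M_{T'}\to 0$ to see that $\Hom_\Lambda(T'^{-1},T^0)/(d')^*\Hom_\Lambda(T'^0,T^0)$ computes something like a cokernel; and then one uses that $T$ is a (minimal) projective presentation $P_{M_T}^{-1}\xrightarrow{d}P_{M_T}^0$ of $M_T$, so that $N_{T'}=H^{-1}(T')=\Omega^2 M_{T'}$ (when $T'^0\neq 0$) fits into the exact sequence $0\to N_{T'}\to T'^{-1}\xrightarrow{d'} T'^0$. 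The key computational input is that $\Lambda$ is symmetric, hence self-injective: this lets one move projective covers around freely, and in particular ensures that $\Hom_\Lambda$ into a projective of the cokernel term behaves well. The cleanest route is probably: since $T=P_{M_T}$ is a projective presentation, $\Hom_{\Kb(\proj\Lambda)}(T',T[1])$ is naturally identified with $\operatorname{Ext}^1_\Lambda(M_{T'},M_T)$-type data, or more precisely with $\overline{\Hom}_\Lambda(\Omega M_{T'},\Omega M_T)$ stable-Hom; then one rewrites this, using self-injectivity and the syzygy-cosyzygy duality, as $\Hom_\Lambda(M_T,\Omega^2 M_{T'})=\Hom_\Lambda(M_T,N_{T'})$. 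I would make this precise using that for $\Lambda$ self-injective, $\Omega$ is an autoequivalence of $\underline{\mod}\Lambda$ and there is a functorial isomorphism $\overline{\Hom}_\Lambda(X,Y)\cong\overline{\Hom}_\Lambda(\Omega X,\Omega Y)$, combined with projectivity of $T^0$ and $T'^0$ to pass between stable and ordinary Hom.

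The main obstacle, I expect, is bookkeeping the two factoring subspaces simultaneously and not double-counting: quotienting $\Hom_\Lambda(T'^{-1},T^0)$ by both $d_*\Hom(T'^{-1},T^{-1})$ and $(d')^*\Hom(T'^0,T^0)$, and recognizing the result as an honest Hom-group $\Hom_\Lambda(M_T,N_{T'})$ rather than some subquotient. The trick is that, because $T=P_{M_T}$ is a \emph{minimal} projective presentation and $\Lambda$ is symmetric, applying $\Hom_\Lambda(T'^{-1},-)$ to $P_{M_T}^{-1}\xrightarrow{d}P_{M_T}^0\to M_T\to 0$ and then cutting down by the image of $\Hom(T'^0,-)$ corresponds exactly to replacing $T'^{-1}$ by $\Omega^2 M_{T'}=N_{T'}$ on the source side; minimality guarantees no spurious homotopies beyond those recorded, so the vanishing of the quotient is equivalent to the vanishing of $\Hom_\Lambda(M_T,N_{T'})$. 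I would handle the degenerate cases ($T$ or $T'$ a stalk complex in degree $0$ or $-1$) separately at the end, checking each against the convention $N_{T'}=T'^{-1}$ when $T'^0=0$ and noting that when $T^0=0$ the group $\Hom_{\Kb}(T',T[1])$ and $\Hom_\Lambda(M_T,N_{T'})=\Hom_\Lambda(0,N_{T'})$ are both zero.
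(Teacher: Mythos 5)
Your overall strategy---computing $\Hom_{\Kb(\proj\Lambda)}(T',T[1])$ as the quotient of $\Hom_\Lambda(T'^{-1},T^0)$ by $d_*\Hom_\Lambda(T'^{-1},T^{-1})+(d')^*\Hom_\Lambda(T'^0,T^0)$ and then reinterpreting that quotient---is viable, and it is genuinely different from the paper's proof, which simply quotes \cite[Lemma 3.4]{AIR} for the case $T'^0\neq 0$ and only argues the stalk case $T'^0=0$ by hand. However, as written your argument has a real gap: you never correctly supply the step that exchanges the roles of source and target. Everything your computation naturally produces is a Hom-space \emph{out of} $T'^{-1}$ or $N_{T'}$ and \emph{into} $T^0$ or $M_T$, whereas the statement concerns $\Hom_\Lambda(M_T,N_{T'})$, with $M_T$ as the source; these two spaces have different dimensions in general. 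The bridges you propose do not do this job: the identification of $\Hom_{\Kb(\proj\Lambda)}(T',T[1])$ with $\overline{\Hom}_\Lambda(\Omega M_{T'},\Omega M_T)$ is false (for self-injective $\Lambda$ that space is $\overline{\Hom}_\Lambda(M_{T'},M_T)$, which contains the identity whenever $T=T'$ is a non-stalk pretilting complex, while the left-hand side then vanishes), and ``syzygy--cosyzygy duality'' or the fact that $\Omega$ is a stable autoequivalence only shifts syzygies---it never swaps the two arguments of $\Hom$.

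The missing ingredient is the symmetric-algebra duality $\Hom_\Lambda(X,P)\cong D\Hom_\Lambda(P,X)$, natural in $X$ and in the projective $P$, where $D=\Hom_\Bbbk(-,\Bbbk)$. With it your computation closes up: since $T^0$ and $T^{-1}$ are also injective, restriction of maps to $N_{T'}=\ker(d')\subseteq T'^{-1}$ identifies your quotient with $\Hom_\Lambda(N_{T'},T^0)/d_*\Hom_\Lambda(N_{T'},T^{-1})$, and applying the duality to $P=T^{-1},T^0$ turns this cokernel of $d_*$ into $D$ of the kernel of $d^*\colon\Hom_\Lambda(T^0,N_{T'})\to\Hom_\Lambda(T^{-1},N_{T'})$, which equals $D\Hom_\Lambda(M_T,N_{T'})$ by left-exactness of $\Hom_\Lambda(-,N_{T'})$ applied to $T^{-1}\to T^0\to M_T\to 0$. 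Note in particular that you cannot instead pass directly to $\Hom_\Lambda(N_{T'},M_T)$: the functor $\Hom_\Lambda(N_{T'},-)$ is not right exact, so ``replacing $T'^{-1}$ by $N_{T'}$ on the source side'' does not by itself yield an honest Hom-group, let alone one with the arguments in the required order.
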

\begin{proof}
Assume that $T'^{0}\neq 0$. Then, by \cite[Lemma 3.4]{AIR}, 
we have that $\Hom_{\Lambda}(M_{T},\Omega^2 M_{T'})=0$ if and only if $\Hom_{\Kb(\proj\Lambda)}(T',T[1])=0$.
Assume that $T'^{0}=0$. Then we have 
\begin{align}
\Hom_{\Lambda}(M_{T}, T'^{-1})=0 
&\Leftrightarrow \Hom_{\Lambda}(T'^{-1}, M_{T})=0\notag\\
&\Leftrightarrow \Hom_{\Kb(\proj\Lambda)}(T',T[1])=0.\notag
\end{align}
Hence, the assertion follows.
\end{proof}

This lemma gives the reason why we are not interested in the band modules.  Indeed, a band module $M$ satisfies $\Omega^2(M)\cong M$, so $\mathrm{id}_M\in \Hom_{\Lambda_\G}(M,\Omega^2 M)=\Hom_{\Lambda_\G}(M,M)$.  In particular, an indecomposable two-term pretilting complex is never isomorphic to the minimal projective presentation of a band module.

Let $T=(T^{-1}\xto{d} T^{0})$ and $T'=(T'^{-1}\xto{d'} T'^{0})$ be complexes in $\gcx\Lambda$.
Let $W:=W_{T}$ and $W':=W_{T'}$ be signed walks with signatures $\e$ and $\e'$ respectively, 
corresponding to $T$ and $T'$ under the bijection in Lemma \ref{lem-SW-biject}.
In order to prove Theorem \ref{mainbij}, we need to understand the interaction between the combinatorics of the pair $W,W'$ and the zeroness of $\Hom_\Lambda(M_T,N_{T'})$.
For this purpose, we recall the combinatorics used to study string modules over special biserial algebras.

\subsection{String combinatorics}\label{subsec-string-com}
Fix a Brauer graph $\G$, and let $A$\label{sym-A} be the algebra $\Lambda_{\G}/\soc(\Lambda_{\G})$.
Then $A$ is given by the bounded path algebra $\Bbbk Q/I$ where $Q=Q_{\G}$ (cf. Definition \ref{def-BGA}) and $I$ is the relational ideal generated by the Brauer relation (Br2) of $\Lambda_{\G}$ along with $[e]^{\mm(s(e))}=0$ for all $e\in H$.

For an arrow $\alpha\in Q_1$, we denote its head by $\hd(\alpha)$\label{sym-head}, and its tail by $\tail(\alpha)$\label{sym-tail}.
Since our convention is to think of arrows as maps, an arrow $\alpha$ will be drawn as $(\hd(\alpha)\xleftarrow{\alpha}\tail(\alpha))$, opposite to the direction used in \cite{BR,Erd}.  
Our convention for head and tails of trivial path is the usual one, i.e. $\hd(1_E)=\tail(1_E)=E$.

We denote by $\alpha^{-1}$\label{sym-inverse} the (formal) \emph{inverse} of $\alpha$, and by $Q_1^{-1}$ the set of formal inverses.
We also set $\hd(\alpha^{-1}):=\tail(\alpha)$, $\tail(\alpha^{-1})=\hd(\alpha)$, $(\alpha^{-1})^{-1}:=\alpha$, and $1_E^{-1}=1_E$.
The set $\mathcal{A}$\label{sym-alphabet-set} of \emph{alphabets} associated to $A$ consists of elements $1_E, \alpha,\alpha^{-1}$ over all $E\in Q_0$ and all $\alpha\in Q_1$.
An alphabet is \emph{directed} (resp. \emph{inverse}) if it is a trivial path or is in $Q_1$ (resp. $Q_1^{-1}$).
We also call $1_E$ the \emph{trivial alphabet at $E$} for any $E\in Q_0$.

A \emph{word} $\word$\label{sym-word} is a sequence $\alpha_1\alpha_2\cdots \alpha_l$ of alphabets $\alpha_i\in \mathcal{A}$ so that $\tail(\alpha_{i})=\hd(\alpha_{i+1})$ for $i\in\{1,2,\ldots,l-1\}$.  
The head and tail of a word are defined by $\hd(\word):=\hd(\alpha_1)$, $\tail(\word):=\tail(\alpha_l)$.
A \emph{subword} of $\word$ is just a continuous subsequence of $\word$.
The \emph{inverse} of a word is $\word^{-1}:=\alpha_l^{-1}\alpha_{l-1}^{-1}\cdots \alpha_1^{-1}$.
For example, if $e,f$ are half-edges with $s(e)=s(f)$, then the path $(e|f)=(e|\sigma(e))(\sigma(e)|\sigma^2(e))\cdots (\sigma^{-1}(f)|f)$ can be regarded as a word with $\hd(e|f)=E$ and $\tail(e|f)=F$.

If $\word=\alpha_1\alpha_2\cdots \alpha_k$ and $\word'=\alpha_1'\alpha_2'\cdots \alpha_l'$ are words with $\tail(\word)=\hd(\word')$, then $\word\word'$ is the \emph{concatenation} of $\word$ and $\word'$ given by $\alpha_1\alpha_2\cdots \alpha_k\alpha_1'\alpha_2'\cdots \alpha_l'$.
Consider the set $\mathcal{S}$ given by words of the form $\word=\alpha_1\alpha_2\cdots\alpha_k$ such that $\alpha_i\neq \alpha_{i+1}^{-1}$ for all $i\in\{1,\ldots,l-1\}$, and no subword or its inverse belongs to the relational ideal $I$.
We can define an equivalence relation $\sim$ on $\mathcal{S}$ generated by $\word\sim\word$, $\word^{-1}\sim\word$, and $\word 1_{E}\word'\sim\word\word'$ for $E=\hd(\word')=\tail(\word)$.
A \emph{string} $\word=\alpha_1\alpha_2\cdots\alpha_l$ (of $A$) is a representative in an equivalence class of $\mathcal{S}$.
It is said to be \emph{directed} (resp. \emph{inverse}) if all  $\alpha_i$'s are directed (resp. inverse).
We say that $\word$ is a \emph{substring} of $\word'$ if there exist (possibly trivial) strings $\mathsf{u},\mathsf{v}$ such that the concatenation $\mathsf{u}\word\mathsf{v}$ is equivalent to $\word'$.

\medskip

For a trivial string $\word=1_E$, the associated string module, denoted by $M(\word)$, is the simple module $S_E$ corresponding to $E\in Q_0$.
If $\word=\alpha_1\alpha_2\cdots \alpha_l$ is a non-trivial string, then there is a quiver $Q_\word$\label{sym-Qword} whose underlying graph is a line with vertex set $\{0,1,\ldots,n\}$, and arrows $(i\from i+1)$ if $\alpha_{i+1}\in Q_1$; $(i\to i+1)$ otherwise.
There is a representation $V$ of $Q_\word$ given by putting a 1-dimensional $\Bbbk$-vector space at each vertex of $Q_{\word}$ and the identity map on each arrow.
By construction, there is a canonical morphism of quivers $\phi:Q_{\word}\to Q$ which respects the relations of $A$, i.e. the image of a non-zero path in $Q_{\word}$ does not belong to the relational ideal $I$ of $A$.
In particular, we have a functor $\mod{\Bbbk Q_{\word}}\to \mod{A}$ which sends the indecomposable representation $V$ to an indecomposable $A$-module $M(\word)$.
We call $M(\word)$ the \emph{string module $M(\word)$ associated to $\word$}.
Note that $\word\sim \word'$ is equivalent to $M(\word)= M(\word')$.

By the Drozd-Kirichenko rejection lemma \cite{DK}, the canonical (fully faithful) embedding $\mod{A}\to \mod{\Lambda_{\G}}$ induces a bijection between the set of isomorphism classes of indecomposable $A$-modules to the set of isomorphism classes of indecomposable non-projective $\Lambda_{\G}$-modules.
Hence, $M(\word)$ can be regarded as an indecomposable (non-projective) $\Lambda_{\G}$-module naturally for any string $\word$ of $A$.

\begin{example}\label{eg-hook}
(1)  For each $e\in H$, we define a directed string 
\[
\eta_e := \begin{cases}
[e]^{\mm(s(e))-1}(e|\sigma^{-1}(e)), & \text{if $e$ is not truncated;}\\
1_{\{e,\ol{e}\}}, & \text{if $e$ is truncated.}
\end{cases}
\]\label{sym-hook}
The associated string module $M(\eta_e)$ is isomorphic to what Roggenkamp calls maximal (and co-maximal) uniserial non-projective module \cite{Ro}.
Its minimal projective presentation is given by $P_{\{\sigma(\ol{e}),\ol{\sigma(\ol{e})}\}}\xto{(\ol{e}\,|\,\sigma(\ol{e}))\cdot-} P_{\{e,\ol{e}\}}$, which is associated to the signed half-walk $(e^+,\sigma(\ol{e})^-)$.

(2)  For an edge $E=\{e,\ol{e}\}$ of $\G$ (i.e. a vertex in $Q_0$), define two strings
\begin{align}
\word_E := \eta_e\eta_{\ol{e}}^{-1} & \text{ and }\word^E := \eta_e^{-1}\eta_{\ol{e}}.\notag 
\end{align}
Then $M(\word_E)\cong \mathsf{rad}P_E$ and $M(\word^E)\cong P_E/\soc P_E$ as $\Lambda_\G$-modules respectively.
\end{example}

As we have mentioned previously, the minimal projective presentation of a string module (in the sense above) takes the form of Definition \ref{def-string-mod}; for a detailed explanation, see \cite[Section 3]{WW}.

\medskip

Let $T,T'$ be the complexes associated to half-walks $w,w'$ as before.
We are now going to write down the strings $\word, \word'$ so that 
\begin{align}
M(\word) & \cong \begin{cases}
M_T, & \text{if }M_T\notin \proj\Lambda;\\
M_T/\soc M_T, & \text{otherwise,} 
\end{cases} \notag \\
\text{and }M(\word') & \cong \begin{cases}
N_{T'}, & \text{if }N_{T'}\notin \proj\Lambda;\\
\mathsf{rad}N_{T'}, & \text{otherwise.} 
\end{cases}\notag
\end{align}
In particular, we have $\Hom_{\Lambda_{\G}}(M_T,N_{T'})\cong \Hom_A(M(\word),M(\word'))$.
For convenience, we call $\word,\word'$ the \emph{strings associated to $w,w'$} respectively.

To avoid being too repetitive, we change the convention on the indices of the half-edges in a half-walk temporarily as follows.
Let $w$ be a signed half-walk defined by
\begin{align}\label{eq-w-alt}
w:=( (e_0^+,)\; e_1^-, e_2^+, \ldots, e_m^- \;(, e_{m+1}^+)).
\end{align}
The bracket terms let us consider the three different possible half-walks (i.e. starting and ending in the a positively-signed half-edge; starting and ending in a negatively-singed half-edge; starting and ending in half-edges with different signs) in one setting by removing one, or both, of $e_0$ and $e_{m+1}$.
In particular, in the case when neither $e_0$ nor $e_{m+1}$ is removed, the virtual half-edges attached to the endpoints of $w$ are enumerated by $\ol{e_{-1}}$ and $e_{m+2}$; similarly in the case when one of $e_0$ and $e_{m+1}$ is removed.

When $w=(e_0^+)$, $M_T$ is just the indecomposable projective module $P=P_{\{e_0,\ol{e_0}\}}$, and we take $\word = \word^{\{e_0,\ol{e_0}\}}=\eta_{e_0}^{-1}\eta_{\ol{e_0}}$ (which gives $M(\word)\cong P/\soc P$).
When $w=(e_1^-)$, $M_T$ is zero, and so we take $\word$ to be an empty string.
In all other cases (i.e. $M_T$ is non-projective), we define $\word=\word_1\word_2\word_3$, where $\word_i$'s are given as follows.
\begin{align}
\word_1 & = \begin{cases}
1_{\{e_2,\ol{e_2}\}}, & \text{if $e_0$ is removed and $\sigma^{-1}(\ol{e_1})=e_2$;} \\
(e_2|\sigma^{-1}(\ol{e_1}))^{-1}, & \text{if $e_0$ is removed and $\sigma^{-1}(\ol{e_1})\neq e_2$;} \\
\eta_{e_0}^{-1}(\ol{e_0}|e_1)(e_2|\ol{e_1})^{-1}, & \text{if $e_0$ is not removed.}
\end{cases} \notag \\
\word_2 & = (\ol{e_2}|e_3) (e_4|\ol{e_3})^{-1} \cdots (e_{k-1}|\ol{e_{k-2}})^{-1}. \notag \\
\word_3 & = \begin{cases}
1_{\{e_{m-1},\ol{e_{m-1}}\}}, & \text{if $e_{m+1}$ is removed and $\sigma^{-1}(e_m)=\ol{e_{m-1}}$;} \\
(\ol{e_{m-1}}|\sigma^{-1}(\ol{e_m})), & \text{if $e_{m+1}$ is removed and $\sigma^{-1}(e_m)\neq \ol{e_{m-1}}$;} \\
(\ol{e_{m-1}}|e_m)(e_{m+1}|\ol{e_m})^{-1}\eta_{\ol{e_{m+1}}}, & \text{if $e_{m+1}$ is not removed.} \notag 
\end{cases}
\end{align}

Using \cite[Section 3]{WW}, one can write down the minimal projective presentation of $M(\word)$ and see that it coincides with $T_w$, i.e. we have $M(\word)\cong M_T$.

\medskip 

Adopting similar conventions, we take
\begin{align}\label{eq-w'-alt}
w':=((e_0'^{-},)\; e_1'^+, e_2'^-, \ldots, e_n'^+\; (, e_{n+1}'^-)).
\end{align}
When $w=(e_0'{}^-)$, $N_{T'}$ is just the indecomposable projective module $P=P_{\{e_0,\ol{e_0}\}}$, and we take $\word' = \eta_{\sigma(e_0)}^{-1}\eta_{\sigma(\ol{e_0})}$ (which gives $M(\word)\cong \mathsf{rad} P$).
When $w=(e_1'{}^+)$, $N_{T'}$ is zero, and so we take $\word$ to be an empty string.
In all other cases (i.e. $N_{T'}$ is non-projective), we define $\word'=\word'_1\word'_2\word'_3$, where each $\word'_i$ is defined as follows.
\begin{align}
\word'_1 & = \begin{cases}
1_{\{e_2',\ol{e_2'}\}}, & \text{if $e_0$ is removed and $\sigma(\ol{e_1'})=e_2'$;} \\
(\sigma(\ol{e_1'}|e_2')), & \text{if $e_0$ is removed and $\sigma(\ol{e_1'})\neq e_2'$;} \\
\eta_{\sigma(e_0')}(e_1'|\ol{e_0'})^{-1}(\ol{e_1'}|e_2'), & \text{if $e_0'$ is not removed.}
\end{cases} \notag \\
\word'_2 & = (e_3'|\ol{e_2'})^{-1}(\ol{e_3'}|e_4')  \cdots (\ol{e_{n-2}'}|e_{n-1}'). \notag \\
\word'_3 & = \begin{cases}
1_{\{e_{n-1}',\ol{e_{n-1}'}\}}, & \text{if $e_{n+1}'$ is removed and $\sigma(e_n')=\ol{e_{n-1}'}$;} \\
(\sigma(e_n')|\ol{e_{n-1}'})^{-1}, & \text{if $e_{n+1}'$ is removed and $\sigma(e_n')\neq \ol{e_{n-1}'}$;} \\
(e_n'|\ol{e_{n-1}'})^{-1}(\ol{e_n'}|e_{n+1}')\eta_{\sigma(\ol{e_{n+1}'})}^{-1}, & \text{if $e_{n+1}'$ is not removed.} \notag 
\end{cases}
\end{align}
One can write down the minimal injective copresentation of $M(\word')$ and see that it coincides with $T'$.
Alternatively, one can check that $M(\word')\cong N_{T'}$ in the following way.
Write down the string defining $H^0(T')$ using the formulae in the previous part, then one can obtain the string associated to the Auslander-Reiten translate of $H^0(T')$ using standard tricks in string combinatorics; see, for example, \cite[II.6]{Erd}.
The resulting string is then equivalent to $\word'$, because $\Lambda_\G$ being symmetric implies that the Auslander-Reiten translate of $H^0(T')$ is isomorphic to $\Omega^2(H^0(T'))\cong H^{-1}(T')$.

\begin{example}\label{eg-loewy0}
Consider the ribbon graph $\G$ and signed half-walk \[
w_1=(1^+,2^-,3^+,\ol{6^-},\ol{5^+},\ol{4^-},\ol{3^+},\ol{2^-},\ol{1^+},4^-)=(e_1^+,e_2^-,\ldots, e_{10}^-)
\]
in Example \ref{eg-walk1}.
Let $\word=\word_1\word_2\word_3$ be the string constructed using the above algorithm so that $M(\word)\cong M_{T}:=H^0(T_{w_1})$.
We get that 
\[
\word_1 = \eta_1^{-1}(\ol{1}|2)(3|\ol{2})^{-1}\;\;, \;\; \word_2 = (\ol{3}|\ol{6})(\ol{5}|6)^{-1}(5|\ol{4})(\ol{3}|4)^{-1}(3|\ol{2})(\ol{1}|2)^{-1}\;\;,\;\; \word_3 = (1|\ol{3}).
\]
Note that the exact form of $\eta_1$ depends on the choice of multiplicity one equips on $\G$.
Similarly, for the string $\word'=\word_1'\word_2'\word_3'$ associated to $N_{T'}:=H^{-1}(T_{w_1})$, we have
\[
\word_1' = 1_{2,\ol{2}}\;\;,\;\; \word_2'= (3|\ol{2})^{-1}(\ol{3}|\ol{6})(\ol{5}|6)^{-1}(5|\ol{4})(\ol{3}|4)^{-1}(3|\ol{2})\;\;,\;\;  \word_3'=(\ol{1}|2)^{-1}(1|4)\eta_{5}^{-1} .
\]
\end{example}

\subsection{Analysing the Hom-space}
As a consequence of the main result in \cite{CB}, the Hom-space between two string modules can be  described easily.
Instead of stating it in its original form, we present this result in the language of string combinatorics.
\begin{theorem}{\rm \cite{CB}}\label{thm-CB}
For any strings $\word,\word'$ of $A$, the Hom-space $\Hom_A(M(\word),M(\word'))$ admits a $\Bbbk$-basis $\{f_{\mathsf{u}}\}_{\mathsf{u}}$ indexed by strings $\mathsf{u}$ which satisfy the following conditions.
\begin{itemize}
\item[(i)] $\mathsf{u}$ is a substring of both $\word$ and $\word'$.
\item[(ii)] For any non-trivial alphabet $\alpha$, $\alpha \mathsf{u}$ (resp. $\mathsf{u}\alpha$) is a substring of $\word$ implies that $\alpha$ is inverse (resp. directed).
\item[(iii)] For any non-trivial alphabet $\alpha$, $\alpha \mathsf{u}$ (resp. $\mathsf{u}\alpha$) is a substring of $\word'$ implies that $\alpha$ is directed (resp. inverse).
\end{itemize}
\end{theorem}
Note that the string module $M(\mathsf{u})$ is the image of $f_{\mathsf{u}}$.
In particular, the condition (ii) (resp. (iii)) is equivalent to saying that $M(\mathsf{u})$ is a quotient of $M(\word)$ (resp. a submodule of $M(\word')$).

\begin{lemma}\label{lem-M-struc}
Let $w$ be a signed half-walk $( (e_0^+,)\; e_1^-, e_2^+, \ldots, e_m^- \;(, e_{m+1}^+))$, where the convention of the bracketed terms are as in (\ref{eq-w-alt}).
Let $\word$ be the string associated to $w$.
Suppose $\mathsf{u}=\mathsf{u}_1\mathsf{u}_2\cdots \mathsf{u}_k$ is a subword of $\word$ with even $k\geq 2$ so that $\mathsf{u}_i$ is directed (resp. inverse) for all even (resp. odd) $i$.
\begin{enumerate}
\item[(1)] Assume that both $[e_0]$ and $([e_{m+1}])^{-1}$ are not subwords of $\mathsf{u}$.
Then $M(\mathsf{u})$ is a quotient of $M(\word)$ if, and only if, there exists an integer $a\geq 0$ such that the following conditions are satisfied. 
\begin{itemize}
\item Both $e_{2a}, e_{2a+k-2}$ are not virtual.

\item $\mathsf{u}_1$ is $1_{\{e_{2a},\ol{e_{2a}}\}}$, or $(e_{2a}|f)^{-1}$ for some $f\in H$ with cyclic subordering $(e_{2a},f,\ol{e_{2a-1}})_{s(e_{2a})}$.

\item $\mathsf{u}_k$ is $1_{\{e_{2a+k-2},\ol{e_{2a+k-2}}\}}$, or $(\ol{e_{2a+k-2}}|g)$ for some $g\in H$ with cyclic subordering $(\ol{e_{2a+k-2}},g,e_{2a+k-1})_{s(\ol{e_{2a+k-2}})}$.
\end{itemize}
Moreover, in such a case, we have $\mathsf{u}_i=(\ol{e_{2a+i-2}}|e_{2a+i-1})$ for all even $i\in \{2,4,\ldots,k-2\}$ and $\mathsf{u}_i=(e_{2a+i-1}|\ol{e_{2a+i-2}})^{-1}$ for all odd $i\in\{3,5,\ldots, k-1\}$.

\item[(2)] If $M(\mathsf{u})$ is a quotient of $M(\word)$, then there exists a subword of $\mathsf{u}$ which satisfies the conditions in (1).
\end{enumerate}
\end{lemma}
\begin{proof}
Apply the criteria of Theorem \ref{thm-CB} (ii) to $\mathsf{u}$ and $\word$, then it is clear that $\mathsf{u}_i$ is given as stated for all $i\in\{2,3,\ldots, k-1\}$, whereas $\mathsf{u}_1$ is given by the concatenation of $[e_{2a}]^s$ with $s\geq 0$ and the string $(e_{2a}|f)^{-1}$ stated in (1).
Moreover, we have 
\[
s\geq 1 \Leftrightarrow  \mm(s(e_{2a}))>1 \text{ and }a = \begin{cases}
2, &\text{if $e_0$ is removed;}\\ 0, &\text{otherwise.}
\end{cases}
\]
In such a case, removing $[e_{2a}]^s$ from $\mathsf{u}_1$ gives us a new substring which still satisfies Theorem \ref{thm-CB} (ii); hence, defines a quotient of $M(\word)$.
The claims now follow from applying similar reasoning for $\mathsf{u}_k$.
\end{proof}

Dually, one can deduce the following result from Theorem \ref{thm-CB} (iii).

\begin{lemma}\label{lem-N-struc}
Let $w$ be a signed half-walk $((e_0'^{-},)\; e_1'^+, e_2'^-, \ldots, e_n'^+\; (, e_{n+1}'^-))$, where the convention of the bracketed terms are as in (\ref{eq-w'-alt}).
Let $\word'$ be the string associated to $w'$.
Suppose $\mathsf{u}'=\mathsf{u}'_1\mathsf{u}'_2\cdots \mathsf{u}'_l$ is a subword of $\word'$ with even $l\geq 2$ so that $\mathsf{u}_i$ is inverse (resp. directed) for all even (resp. odd) $i$.
\begin{enumerate}
\item[(1)] Assume that both $[e_0']$ and $([e_{n+1}'])^{-1}$ are not subwords of $\mathsf{u}$.
Then $M(\mathsf{u}')$ is a submodule of $M(\word')$ if, and only if, there exists an integer $b\geq 0$ such that the following conditions are satisfied. 
\begin{itemize}
\item Both $e_{2b}', e_{2b+l}'$ are not virtual.

\item $\mathsf{u}'_1$ is $1_{\{e_{2b}',\ol{e_{2b}'}\}}$, or $(f'|e_{2b}')$ for some $f'\in H\setminus\{ e_{2b}', \ol{e_{2b-1}'}\}$ with cyclic subordering $(e_{2b}',\ol{e_{2b-1}'},f')_{s(e_{2b}')}$.

\item $\mathsf{u}'_l$ is $1_{\{e_{2b+l-2}',\ol{e_{2b+l-2}'}\}}$, or $(g'|\ol{e_{2b+l-2}'})^{-1}$ for some  $g'\in H \setminus \{\ol{e_{2b+l-2}'},e_{2b+l-1}'\}$ with cyclic subordering $(\ol{e_{2b+l-2}'},e_{2b+l-1}',g')_{s(\ol{e_{2b+l-2}'})}$.
\end{itemize}
Moreover, in such a case, we have $\mathsf{u}'_i=(e_{2b+i-1}'|\ol{e_{2b+i-2}'})^{-1}$ for all even $i\in\{2,4,\ldots, l-2\}$ and $\mathsf{u}'_i=(\ol{e_{2b+i-2}'}|e_{2b+i-1})$ for all odd $i\in\{3,5,\ldots, l-1\}$.

\item[(2)] If $M(\mathsf{u}')$ is a submodule of $M(\word')$, then there exists a subword of $\mathsf{u}'$ which satisfies the conditions in (1).
\end{enumerate}
\end{lemma}

\begin{lemma}\label{lem-L-uniserial}
Consider the $\Lambda_{\G}$-modules $M_T:=H^0(T_W)$ and $N_{T'}:=H^{-1}(T_{W'})$, where $W,W'$ are signed walks on $\G$.
The following are equivalent:
\begin{itemize}
\item[(U1)] There is a non-zero map $\alpha:M_T\to N_{T'}$ which factors through a uniserial module.

\item[(U2)] There are $w=(e_1,\ldots,e_m;\e)\in W$ and $w'=(e'_1,\ldots,e'_n;\e')\in W'$, $i\in\{1,2,\ldots,m\}$, and $j\in\{1,2,\ldots,n\}$ such that $\e(e_i)=+$, $\e'(e_j')=-$, $s(e_i)=s(e_j')$, and one of the following conditions holds:
\begin{itemize}
\item[(i)] $e_i=e_j'$.
\item[(ii)] $(e_i,e_j',\ol{e_{i-1}}=\ol{e_{j-1}'})_v$ is a cyclic subordering around $v$.
\item[(iii)] $(e_i,e_j',\ol{e_{j-1}'},\ol{e_{i-1}})_v$ is a cyclic subordering around $v$.
\item[(iv)] $(e_i,e_j',\ol{e_{i-1}},\ol{e_{j-1}'})_v$ is a cyclic subordering around $v$.
\end{itemize}
\end{itemize}
\end{lemma}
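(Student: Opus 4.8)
The plan is to reduce (U1) to a statement about locating a single module on both sides: a non-zero $\alpha\colon M_T\to N_{T'}$ factors through a uniserial module if and only if there is a uniserial $\Lambda$-module $U$ which is simultaneously a quotient of $M_T$ and a submodule of $N_{T'}$. Indeed, if $\alpha$ factors as $M_T\xrightarrow{p}V\xrightarrow{q}N_{T'}$ with $V$ uniserial and $\alpha\neq 0$, then $\operatorname{im}(p)\leq V$ is uniserial and $\operatorname{im}(\alpha)=q(\operatorname{im}(p))$ is a uniserial quotient of $\operatorname{im}(p)$, hence a uniserial module that is both a quotient of $M_T$ (via $\alpha$) and a submodule of $N_{T'}$; conversely any such $U$ gives a non-zero composite $M_T\twoheadrightarrow U\hookrightarrow N_{T'}$. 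So the whole lemma becomes bookkeeping about which uniserial modules appear on both sides.

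The second step is to feed in the structural description from Subsection \ref{subsec-ribbon-loewy}. Since $T_W$ is the minimal projective presentation of $M_T$, we have $\top(M_T)=\bigoplus_{\e(e_a)=+}S_{E_a}$, so any uniserial quotient $U$ of $M_T$ has top $S_{E_i}$ for some $i$ with $\e(e_i)=+$; by Lemma \ref{lem-M-struc}, for $g\neq e_i$ with $s(g)=s(e_i)$ there is such a $U$ whose composition series contains $S_{\{g\}}$ exactly when $(e_i,g,\ol{e_{i-1}})_{s(e_i)}$ is a cyclic subordering (allowing virtual edges), and $U$ is then rooted at $e_i$ and descends along $\sigma$, reaching at the latest the edge immediately before $\ol{e_{i-1}}$ (or, when $i=1$ so that $\ol{e_{i-1}}$ is virtual, possibly becoming the full hook $H_{e_i}$). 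Dually, by Lemma \ref{lem-N-struc}, every uniserial submodule $U'$ of $N_{T'}$ has socle $S_{E'_j}$ with $\e'(e'_j)=-$, and for $g'\neq e'_j$ there is such a $U'$ passing through $S_{\{g'\}}$ exactly when $(e'_j,\ol{e'_{j-1}},g')_{s(e'_j)}$ is a cyclic subordering. Since a uniserial $\Lambda_\G$-module is rigid — determined by its root half-edge, the chosen arm, and its length — asking that some $U$ be realised on both sides is a matching condition on these data.

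For (U2)$\Rightarrow$(U1): case (i) ($e_i=e'_j$) is immediate, taking $U=S_{E_i}=S_{E'_j}$, a summand of both $\top M_T$ and $\soc N_{T'}$. In cases (ii), (iii), (iv), writing $v=s(e_i)=s(e'_j)$, a short cyclic-rotation argument shows the displayed cyclic subordering contains both $(e_i,e'_j,\ol{e_{i-1}})_v$ and $(e'_j,\ol{e'_{j-1}},e_i)_v$; hence Lemma \ref{lem-M-struc} (with $g=e'_j$) yields a uniserial quotient of $M_T$ with top $S_{E_i}$ passing through $S_{E'_j}$, and Lemma \ref{lem-N-struc} (with $g'=e_i$) yields a uniserial submodule of $N_{T'}$ with socle $S_{E'_j}$ passing through $S_{E_i}$. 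One then truncates both to a common $U$ by reading off, from the relative cyclic position of $\ol{e_{i-1}}$ and $\ol{e'_{j-1}}$ around $v$, the unique length making the top ($S_{E_i}$), the socle ($S_{E'_j}$), and all intermediate composition factors agree — the three positions $\ol{e_{i-1}}=\ol{e'_{j-1}}$, $(\ldots,\ol{e'_{j-1}},\ol{e_{i-1}})$, $(\ldots,\ol{e_{i-1}},\ol{e'_{j-1}})$ being exactly (ii), (iii), (iv). Conversely, given $U$ as in the reduced form of (U1), read off $\top U=S_{E_i}$ (forcing $\e(e_i)=+$) and $\soc U=S_{E'_j}$ (forcing $\e'(e'_j)=-$); if $U$ is simple we are in case (i), and otherwise all composition factors of $U$ sit at one vertex $v$, so after possibly replacing $w$ by $\ol w$ and/or $w'$ by $\ol{w'}$ (which leaves the signatures, hence the signs, unchanged) we may take $v=s(e_i)=s(e'_j)$; Lemmas \ref{lem-M-struc} and \ref{lem-N-struc} (applied with $g=e'_j$ and $g'=e_i$) then force $(e_i,e'_j,\ol{e_{i-1}})_v$ and $(e'_j,\ol{e'_{j-1}},e_i)_v$ to be cyclic suborderings, and amalgamating these two constraints on $e_i,e'_j,\ol{e_{i-1}},\ol{e'_{j-1}}$ around $v$ leaves exactly the three possibilities (ii)–(iv).

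The routine parts are the first-paragraph reduction and the cyclic-rotation identities. I expect the main obstacle to be the amalgamation/alignment argument in the last two paragraphs: matching the uniserial quotient of $M_T$ with the uniserial submodule of $N_{T'}$ requires careful handling of the boundary indices $i\in\{1,m\}$ and $j\in\{1,n\}$ (where $\ol{e_{i-1}}$, $e_{m+1}$, $\ol{e'_{j-1}}$, $e'_{n+1}$ are virtual edges and $U$ may be a full hook module $H_{e_i}$ rather than a short uniserial), the cases where $e_i$ or $e'_j$ is truncated (so the relevant hook degenerates to a simple module), the wrap-around behaviour when $\mm(v)>1$, and — for the converse — verifying that (i)–(iv) are genuinely exhaustive, i.e. that no further configuration of the four half-edges around $v$ produces a common uniserial module.
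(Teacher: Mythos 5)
Your argument is correct and follows essentially the same route as the paper: pass to the image of $\alpha$ to reduce (U1) to the existence of a common uniserial quotient/submodule, then apply Lemma \ref{lem-M-struc} with $g=e_j'$ and Lemma \ref{lem-N-struc} with $g'=e_i$ and combine the two resulting cyclic suborderings into cases (i)--(iv). The only cosmetic difference is your extra ``truncation to a common $U$'' step in (U2)$\Rightarrow$(U1), which is unnecessary since the two lemmas already produce the same uniserial module with top $S_{E_i}$ and socle $S_{E_j'}$.
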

\begin{proof}
Let $\mathsf{u},\mathsf{u}'$ be subwords of $\word,\word'$ respectively so that the uniserial image of the non-zero map $\alpha$ is $M(\mathsf{u})\cong M(\mathsf{u}')$.
Then we have $\mathsf{u}'=\mathsf{u}^\delta$ for some $\delta\in\{+1,-1\}$.

By Lemma \ref{lem-M-struc}, there is some $i\in\{1,2,\ldots, m\}$ with $\e(e_i)=+$ so that $\mathsf{u}$ is one of the following.
\begin{itemize}
\item[(M1)] $1_{\{e_i,\ol{e_i}\}}$.
\item[(M2)] $(e_i|f)^{-1}$ for some $f\in H\setminus\{e_i,\ol{e_{i-1}}\}$ with cyclic subodering $(e_i,f,\ol{e_{i-1}})_{s(e_i)}$.
\item[(M3)] $(\ol{e_i}|g)$ for some $g\in H\setminus\{\ol{e_i},e_{i+1}\}$ with cyclic subordering $(\ol{e_i},g,e_{i+1})_{s(\ol{e_i})}$.
\end{itemize}

Dually, by Lemma \ref{lem-N-struc}, there is some $j\in\{1,2,\ldots, n\}$ with $\e'(e_j')=-$ so that $\mathsf{u'}$ is one of the following.
\begin{itemize}
\item[(N1)] $1_{\{e_j',\ol{e_j'}\}}$.
\item[(N2)] $(f'|e_j')$ for some $f'\in H\setminus\{e_j',\ol{e_{j-1}'}\}$ with cyclic subordering $(e_j',\ol{e_{j-1}'},f')_{s(e_j')}$.
\item[(N3)] $(g'|\ol{e_j'})^{-1}$ for some $g'\in H\setminus\{\ol{e_j'}, e_{j+1}'\}$ with cyclic subordering $(\ol{e_j'}, e_{j+1}', g')_{s(\ol{e_j'})}$.
\end{itemize}

If $\mathsf{u}^{-1}=\mathsf{u}'$, then $(\mathsf{u},\mathsf{u}')$ is in exactly one of the following forms.
\begin{itemize}
\item \underline{((M1),(N1))}: We get that $\mathsf{u}=1_E$ with $E=\{e_i,\ol{e_i}\}=\{e_j,\ol{e_j'}\}$ (i.e. case (i)).

\item \underline{((M2),(N2))}:
Now we have $\mathsf{u}^{-1}=(e_i|e_j')$ with $e_j'\neq e_i$, and the cyclic subordering around $v=s(e_i)=s(e_j')$ satisfies one of the cases (ii), (iii), (iv).

\item \underline{((M3),(N3))}: We go through the whole argument from the beginning again after replacing $w,w'$ by $\ol{w},\ol{w'}$ respectively.  In this new setting, $(\mathsf{u},\mathsf{u}')$ is will be in form of ((M2),(N2)).
\end{itemize}

If $\mathsf{u}=\mathsf{u}'$, then we can replace $w$ by $\ol{w}$ and apply the arguments used for $\mathsf{u}^{-1}=\mathsf{u}'$.

Conversely, given one of the conditions in (i) to (iv), we can construct $\mathsf{u},\mathsf{u}'$ in the form described above, then it follows from Lemma \ref{lem-M-struc} and Lemma \ref{lem-N-struc} that there exists a non-zero map $f_{\mathsf{u}}: M_T\to N_{T'}$.
\end{proof}

\begin{example}\label{eg-loewy1}
Consider again the ribbon graph $\G$ and signed half-walk $w_1$ of Example \ref{eg-loewy0}.
Let $\G_0$ be a Brauer graph given by equipping $\G$ with constant multiplicity $\mm\equiv 1$, and $T=T'$ be the string complex associated to $w_1$.
Recall that we have strings
\begin{align}
\word&= (1|\ol{6})^{-1}(\ol{1}|2)(3|\ol{2})^{-1}\underline{(\ol{3}|\ol{6})}(\ol{5}|6)^{-1}(5|\ol{4})(\ol{3}|4)^{-1}(3|\ol{2})(\ol{1}|2)^{-1}(1|\ol{3}), \notag \\
\word'&= (3|\ol{2})^{-1}(\ol{3}|\ol{6})(\ol{5}|6)^{-1}(5|\ol{4})(\ol{3}|4)^{-1}(3|\ol{2})(\ol{1}|2)^{-1}\underline{(1|4)}(5|\ol{4})^{-1}, \notag 
\end{align}
so that $M_T\cong M(\word)$ and $N_{T'}\cong M(\word')$.
The underlined part of $\word$ (resp. $\word'$) is given by $(\ol{3}|4)(4|\ol{6})$ (resp. $(1|\ol{3})(\ol{3}|4)$).
Therefore, $(\ol{3}|4)$ satisfies the conditions of Theorem \ref{thm-CB}, and $M\big((\ol{3}|4)\big)$ defines a quotient of $M_T$ as well as a submodule of $N_{T'}$.
By (the proof of) Lemma \ref{lem-L-uniserial}, this corresponds to the cyclic subordering $(\ol{e_3},e_{10},e_4,\ol{e_9})_v=(\ol{3},4,\ol{6},1)_v$ around $v$, where (U2)(iv) is satisfied (taking $w=\ol{w_1}, w'=w_1, i=8, j=10$).
\end{example}

\begin{lemma}\label{lem-L-caseC}
The following are equivalent:
\begin{itemize}
\item[(L1)] There is a non-zero map $\alpha:M_T\to N_{T'}$ which factors through a non-uniserial module.
\item[(L2)] For some $w=(e_1,\ldots,e_m;\e)\in W$,  $w'=(e_1',\ldots,e_n';\e')\in W'$, $i\in\{1,\ldots,m\}$, and $j\in\{1,\ldots,n\}$, there is a signed half-walk $z\in w\cap w'$ given by $z=(t_1,\ldots,t_\ell)$ which satisfies 
\begin{itemize}
\item[\textbullet] $t_k=e_{i+k-1}=e_{j+k-1}'$ for all $k\in\{1,\ldots,\ell\}$, 
\item[\textbullet] $\e(t_k)=\e'(t_k)$ for all $k\in\{1,\ldots,\ell\}$, and
\item[\textbullet] the neighbourhood cyclic orderings are
$(\ol{e_{j-1}'},\ol{e_{i-1}},t_1)_{s(t_1)}$ and $(\ol{t_\ell},e_{j+\ell}',e_{i+\ell})_{s(\ol{t_\ell})}$.
\end{itemize}
\end{itemize}
The condition (L2) means that the two walks can be visualised locally as the solid lines in the pictures below:
\begin{align}\label{dgm-NC}
\xymatrix@R=20pt@C=20pt{
&\ar@{--}[rd]^{\ol{e_{j-1}'}}&&&&&\\
W:&&u\ar@{-}[r]^{t_1}&\ar@{.}[r]&\ar@{-}[r]^{\ol{t_\ell}}&v\ar@{--}[rd]_{e_{j+\ell}'}\ar@{-}[ru]^{e_{i+\ell}}&\\
&\ar@{-}[ru]_{\ol{e_{i-1}}}&&&&&
}\hspace{5mm}\xymatrix@R=20pt@C=20pt{
&\ar@{-}[rd]^{\ol{e_{j-1}'}}&&&&&\\
W':&&u\ar@{-}[r]^{t_1}&\ar@{.}[r]&\ar@{-}[r]^{\ol{t_\ell}}&v\ar@{-}[rd]_{e_{j+\ell}'}\ar@{--}[ru]^{e_{i+\ell}}&\\
&\ar@{--}[ru]_{\ol{e_{i-1}}}&&&&&
}
\end{align}
where $u=s(t_1)$, $v=s(\ol{t_\ell})$.
\end{lemma}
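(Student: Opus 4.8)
The plan is to establish Lemma \ref{lem-L-caseC} as the non-uniserial companion of Lemma \ref{lem-L-uniserial}, using the same diagrammatic principle. Recall, from the discussion preceding Lemma \ref{lem-L-uniserial}, that $\Hom_\Lambda(M_T,N_{T'})\neq 0$ precisely when some string module $L$ is simultaneously a quotient of $M_T$ and a submodule of $N_{T'}$, and that a nonzero map $\alpha\colon M_T\to N_{T'}$ factors through a non-uniserial module exactly when such an $L$ can be chosen non-uniserial. So I must show that a non-uniserial common quotient-of-$M_T$/submodule-of-$N_{T'}$ exists if and only if (L2) holds.

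First I would pin down the shape of such an $L$. Being non-uniserial, its decorated Loewy diagram (Section \ref{subsec-ribbon-loewy}) is a genuine zigzag, so it has at least one turn: an interior peak with two descending branches, or an interior valley with two ascending branches. Since $L$ is a quotient of $M_T$, the simples in $\top L$ occur in $\top M_T$, which by the Case 1--3 diagrams consists exactly of the $S_{E_a}$ with $\e(e_a)=+$ (decorated label $e_a=\ol{e_a}$); dually, since $L$ is a submodule of $N_{T'}$, the simples in $\soc L$ occur among the $S_{E'_b}$ with $\e'(e'_b)=-$. Matching the uniserial segments between consecutive turns of $L$ against the uniserial segments in the decorated diagrams of $M_T$ and of $N_{T'}$ forces $w$ and $w'$ to agree along a contiguous block of half-edges; taking it maximal produces a maximal common subwalk $z=(t_1,\dots,t_\ell)\in w\cap w'$ with $t_k=e_{i+k-1}=e'_{j+k-1}$ for all $k$, and the peak--valley alternation along this block must coincide whether read off from $w$ or from $w'$, forcing $\e(t_k)=\e'(t_k)$. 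This is the content of the first three bullets of (L2). The interior of the zigzag of $L$ over $z$ is then automatic, since the connecting uniserial pieces are the short ones determined by consecutive half-edges of $z$ and, as $z\subset w$ and $z\subset w'$ with matching signs, these pieces appear simultaneously in $M_T$ and in $N_{T'}$.

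What remains is the analysis at the two ends of $z$, at $u=s(t_1)$ and $v=s(\ol{t_\ell})$. On the $M_T$-side, ``$L$ is a quotient of $M_T$'' imposes, via Lemma \ref{lem-M-struc} applied at $e_i=t_1$ with $g$ the first half-edge of the branch of $L$ leaving $z$ at $u$, a cyclic subordering around $u$ involving $t_1$ and $\ol{e_{i-1}}$, and dually a condition around $v$ involving $\ol{t_\ell}$ and $e_{i+\ell}$. On the $N_{T'}$-side, ``$L$ is a submodule of $N_{T'}$'' imposes, via Lemma \ref{lem-N-struc}, the matching conditions involving $\ol{e'_{j-1}}$ at $u$ and $e'_{j+\ell}$ at $v$. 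I would then check that these combine at $u$ into precisely $(\ol{e'_{j-1}},\ol{e_{i-1}},t_1)_u$ and at $v$ into $(\ol{t_\ell},e'_{j+\ell},e_{i+\ell})_v$; the relative order is forced by the fact that $z$ continues through $E_i$ via $\ol{t_1}$ (the next half-edge $t_2=e_{i+1}$ being attached along $\ol{t_1}$), whereas $E_{i-1}$ and $E'_{j-1}$ are both attached at $u$ on the $t_1$-side, and symmetrically at $v$. This gives (L1)$\Rightarrow$(L2). For the converse, I would reverse the construction: given $z$ with matching signatures and the two prescribed neighbourhood cyclic orderings, take $L$ to be the short string module whose decorated Loewy diagram is the zigzag over $z$ with ends dictated by these orderings; Lemmas \ref{lem-M-struc} and \ref{lem-N-struc} then certify that $L$ is respectively a quotient of $M_T$ and a submodule of $N_{T'}$, and $L$ is non-uniserial because the prescribed orderings bend the zigzag at both $u$ and $v$. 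The composite $M_T\twoheadrightarrow L\hookrightarrow N_{T'}$ is then the required nonzero map factoring through a non-uniserial module.

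The main obstacle I anticipate is the end-point bookkeeping. One must treat the parity of $\ell$ (whether $t_1$ and $t_\ell$ carry equal or opposite signs, hence whether each of $u,v$ is a peak-end or a valley-end of the zigzag) and the degenerate sub-cases where $z$ abuts an endpoint of $w$ or of $w'$, in which the honest half-edges $e_{i-1},e_{i+\ell},e'_{j-1},e'_{j+\ell}$ have to be read as the corresponding virtual edges $e_0,e_{m+1},e'_0,e'_{n+1}$ --- exactly the situation the virtual-edge convention was set up to absorb. One must also invoke the maximality of $z$ as a common subwalk to rule out the zigzag of $L$ running past $z$ along branches that coincide only accidentally, and be careful with the half-edge-versus-side conventions when translating between the decorated Loewy diagrams and cyclic orderings around $u$ and $v$. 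Once these cases are dispatched, Lemma \ref{lem-L-caseC} follows; together with Lemma \ref{lem-L-uniserial} it provides the complete combinatorial criterion for $\Hom_\Lambda(M_T,N_{T'})=0$, hence (via Lemma \ref{lem-non-zero-hom}) for $\Hom_{\Kb(\proj\Lambda)}(T',T[1])=0$, which is the last ingredient required for Theorem \ref{mainbij}.
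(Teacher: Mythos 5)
Your proposal is correct and follows essentially the same route as the paper: identify the image $L$ as a non-uniserial zigzag whose interior forces a maximal common subwalk $z$ with matching signatures, and then use Lemmas \ref{lem-M-struc} and \ref{lem-N-struc} at the two endpoints $u$ and $v$ to extract (resp.\ certify) the prescribed neighbourhood cyclic orderings, with the converse obtained by reversing the construction. The case analysis you flag (the sign pattern of $(\e(t_1),\e(t_\ell))$ giving the three possible shapes of $L$, the virtual-edge degenerations, and maximality of $z$) is exactly how the paper's proof organises the details.
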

\begin{proof}
Similar to the proof of Lemma \ref{lem-L-uniserial}, we take $\mathsf{u}$ and $\mathsf{u}'$ to be the respective subwords of $\word,\word'$ defining the image of $\alpha$.

By Lemma \ref{lem-M-struc}, there are $i\in \{ 1,2,\ldots, m \}$ with $\e(e_{i})=+$ and $k\geq 1$ such that 
\begin{align}
\mathsf{u}= \mathsf{s} (\ol{e_i}|e_{i+1})\cdots (e_{i+k}|\ol{e_{i+k-1}})^{-1} \mathsf{t}, \notag
\end{align}
where $\mathsf{s}=1_{\{e_i,\ol{e_i}\}}$ or $(e_i|f)^{-1}$ with cyclic subordering $(e_{i}, f, \ol{e_{i-1}})_{s(e_{i})}$, and $\mathsf{t}=1_{\{e_{i+k},\ol{e_{i+k}}\}}$ or $(\ol{e_{i+k}}|g)$ with cyclic subordering $(\ol{e_{i+k}}, g, e_{i+k+1})_{s(\ol{e_{i+k}})}$.

Likewise, it follows from Lemma \ref{lem-N-struc} that there are $j\in \{1,2,\ldots, n\}$ with $\e(e_{j}')=-$ and $l\geq 1$ such that 
\begin{align}
\mathsf{u}'= \mathsf{s}' (e_{j+1}'|\ol{e_j'})^{-1}\cdots (\ol{e_{j+l-1}'}|e_{j+l}') \mathsf{t}', \notag
\end{align}
where $\mathsf{s}'=1_{\{e_j',\ol{e_j'}\}}$ or $(f'|e_j')$ with cyclic subordering $(e_{j}', \ol{e_{j-1}'}, f')_{s(e_{j}')}$, and $\mathsf{t}'=1_{\{e_{j+l}',\ol{e_{j+l}'}\}}$ or $(g'|\ol{e_{j+l}'})^{-1}$ with cyclic subordering $(\ol{e_{j+l}'}, e_{j+l+1}', g')_{s(\ol{e_{j+l}'})}$.

Without loss of generality, we can assume $\mathsf{u}=\mathsf{u}'$; otherwise, we replace $\mathsf{u}$ by the string associated to $\ol{w}$.
Combining the two conditions deduced from Lemma \ref{lem-M-struc} and Lemma \ref{lem-N-struc} above, we get that 
\begin{align}
\begin{array}{rlcl}
\text{either }& \mathsf{s}=1_{\{ e_{i}, \ol{e_{i}}\}}, & \text{which is equivalent to }& \mathsf{s}'=(f'|e_j')=(\ol{e_i}|e_{i+1}), \\
\text{or }& \mathsf{s}=(e_i|f)^{-1}=(e_{j+1}'|\ol{e_j'})^{-1}, & \text{which is equivalent to }& \mathsf{s}'=1_{\{e_j',\ol{e_j'}\}}. \\
\end{array}\notag 
\end{align}
Therefore, if $\mathsf{s}=1_{\{e_i,\ol{e_i}\}}$, then we have $f'=\ol{e_{i}}$, $t_{1}:=e_{i+1}=e_{j}'$, and cyclic subordering $(\ol{e_{j-1}'}, \ol{e_{i}}, t_{1})_{s(t_{1})}$.
Note that this is the cyclic subordering $(\ol{e_{j-1}'}, \ol{e_{i-1}}, t_{1})_{s(t_{1})}$ stated in (L2) if we shift the index $i$.

On the other hand, if $\mathsf{s}=(e_i|f)^{-1}$, then we have $f=\ol{e_{j}'}$, $t_{1}:=e_{i}=e_{j+1}'$ and cyclic subordering $(\ol{e_{j}'}, \ol{e_{i-1}}, t_{1})_{s(t_{1})}$.
Shifting the index $j$ yields the required cyclic subordering around $s(t_1)$. 

Similarly, the equivalences
\begin{align}
\begin{array}{rlcl}
& \mathsf{t}=1_{\{e_{i+k},\ol{e_{i+k}}\}} & \Leftrightarrow& \mathsf{t}'=(g'|\ol{e_{j+l}'})^{-1} = (e_{i+k}|\ol{e_{i+k-1}})^{-1}, \\
\text{and }& \mathsf{t}=(\ol{e_{i+k}}|g)=(\ol{e_{j+l-1}'}|e_{j+l}') & \Leftrightarrow& \mathsf{t}'=1_{\{e_{j+l}',\ol{e_{j+l}'}\}},
\end{array} \notag 
\end{align}
yield the required cyclic subordering $(\ol{t_{\ell}}, e_{j+\ell}, e_{i+\ell})_{s(\ol{t_{\ell}})}$ for some $\ell \ge 1$.
Since $t_{2}, \ldots, t_{\ell-1}$ are uniquely determined by $t_{1}$ and $t_{\ell}$, we have the desired signed half-walk $z=(t_{1}, t_{2}, \ldots, t_{\ell})$.

Conversely, given (L2) we can construct $\mathsf{u}, \mathsf{u}'$ in the form described above.
Hence, the claim follows from Lemma \ref{lem-M-struc} and Lemma \ref{lem-N-struc}.
\end{proof}

\begin{example}\label{eg-loewy2}
Consider again the Brauer graph and the walk $w_1$ used in Example \ref{eg-loewy0}, \ref{eg-loewy1}.
There is a maximal common subwalk given by $z_2=(4^-)\in w_1\cap \ol{w_1}$ which looks locally in $\G$ as follows (cf. figures in Example \ref{eg-walk1}):
\[
\xymatrix@R=5pt{
\ar@{--}[rdddd]^<{\vir_+(\ol{4})}& & & &\ar@{-}[ldd]_<{\ol{3}}\\
& & & &\\
& \ar@{-}[rr]^{\ol{4} \qquad 4} & & &\\
& *+[Fo]{u} & & *+[Fo]{v}& \\
& \ar@{-}[rr] & & &\\
& & & & \\
\ar@{-}[ruuuu]^<{5}& & & & \ar@{-}[luu]_<{1}
}
\]
Here we use a dashed line to represent the virtual edge $\vir_+(\overline{4})$ as in Example \ref{eg-walk1}.
Taking $w=\ol{w_1}$, $w'=w_1$, and $z=z_2$, we see that the condition (L2) holds.
The common substring of $\word,\word'$ which corresponds to this crossing is $(5|\ol{4})(\ol{3}|4)^{-1}$.
For clarity, we underlined the appearance of this substring in $\word,\word'$ as follows:
\begin{align}
\word&= (1|\ol{6})^{-1}(\ol{1}|2)(3|\ol{2})^{-1}(\ol{3}|\ol{6})(\ol{5}|6)^{-1}\underline{(5|\ol{4})(\ol{3}|4)^{-1}}(3|\ol{2})(\ol{1}|2)^{-1}(1|\ol{3}), \notag \\
\word'&= (3|\ol{2})^{-1}(\ol{3}|\ol{6})(\ol{5}|6)^{-1}(5|\ol{4})(\ol{3}|4)^{-1}(3|\ol{2})(\ol{1}|2)^{-1}(1|\ol{3})\underline{(\ol{3}|4)(5|\ol{4})^{-1}}. \notag 
\end{align}

For the other maximal subwalk given by $z_1=(1^+,2^-,3^+)\in w_1\cap\ol{w_1}$, we can visualise the local structure around the endpoints as follows:
\[
\xymatrix@R=5pt@C=30pt{
\ar@{-}[rdddd]^<{4}& & & & &\ar@{-}[ldd]_<{\ol{3}}\\
& & & & &\\
& \ar@{-}[r]^{\ol{3}} & \ar@{.}[r]& & \ar@{-}[l]_{1} &\\
& *+[Fo]{v} & & & *+[Fo]{v}& \\
& \ar@{-}[r] & \ar@{.}[r]& &\ar@{-}[l]&\\
& & & & \\
\ar@{-}[ruuuu]^<{\ol{6}}& & & & & \ar@{--}[luu]_<{\vir_-(1)}
}
\]
This indicates that $z_1$ satisfies the condition (L2).
The corresponding common substring of $\word,\word'$ is given by $(1|4)^{-1}(\ol{1}|2)(3|\ol{2})^{-1}(\ol{3}|4)$, which is underlined as follows:
\begin{align}
\word&= (4|\ol{6})^{-1}\underline{(1|4)^{-1}(\ol{1}|2)(3|\ol{2})^{-1}(\ol{3}|4)}(4|\ol{6})(\ol{5}|6)^{-1}(5|\ol{4})(\ol{3}|4)^{-1}(3|\ol{2})(\ol{1}|2)^{-1}(1|\ol{3}), \notag \\
\word'&= (3|\ol{2})^{-1}(\ol{3}|\ol{6})(\ol{5}|6)^{-1}(5|\ol{4})\underline{(\ol{3}|4)^{-1}(3|\ol{2})(\ol{1}|2)^{-1}(1|4)}(5|\ol{4})^{-1}. \notag 
\end{align}
\end{example}

The following proposition is the final piece needed to prove Theorem \ref{mainbij}.
\begin{proposition}\label{lem-hom-to-comb}
\begin{itemize}
\item[(1)] Retaining the notations used so far, $\Hom_{\Kb(\proj\Lambda_\G)}(T',T[1])=0$ if and only if neither (U2) nor (L2) holds for $W=W_{T}$ and $W'=W_{T'}$.
\item[(2)] For two (not necessarily distinct) complexes $U,V\in \gcx\Lambda_\G$, the hom-spaces $\Hom_{\Kb(\proj\Lambda_\G)}(U,V[1])$ and $\Hom_{\Kb(\proj\Lambda_\G)}(V,U[1])$ are simultaneously zero if, and only if $W_{U},W_{V}$ are non-crossing and satisfy the sign condition.
\end{itemize}
\end{proposition}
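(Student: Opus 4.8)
The plan is to prove part (1) first, and then deduce part (2) as a symmetrisation of part (1).

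\textbf{Proof of (1).} The starting point is Lemma~\ref{lem-non-zero-hom}, which translates the vanishing $\Hom_{\Kb(\proj\Lambda_\G)}(T',T[1])=0$ into the vanishing of $\Hom_{\Lambda_\G}(M_T,N_{T'})$, where $M_T=H^0(T)$ and $N_{T'}=H^{-1}(T')$. So the task reduces to: $\Hom_{\Lambda_\G}(M_T,N_{T'})\neq 0$ if and only if (U2) or (L2) holds. For the ``only if'' direction, suppose $\alpha\colon M_T\to N_{T'}$ is a non-zero map. Since both modules are string modules (or, in the degenerate case where $T^0=0$ or $T'^{-1}=0$, one of $M_T,N_{T'}$ is zero and there is nothing to prove), the image $L$ of $\alpha$ is again a string module which is simultaneously a quotient of $M_T$ and a submodule of $N_{T'}$ — this is the standard fact about homomorphisms between string modules recorded just before Lemma~\ref{lem-L-uniserial} (with reference \cite{CB}). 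Now split into two cases according to whether $L$ is uniserial or not. If $L$ is uniserial, then a non-zero map factoring through $L$ is precisely the situation (U1) of Lemma~\ref{lem-L-uniserial}, hence (U2) holds. If $L$ is non-uniserial, then we are in situation (L1) of Lemma~\ref{lem-L-caseC}, hence (L2) holds. Conversely, if (U2) holds, Lemma~\ref{lem-L-uniserial} gives a non-zero map $M_T\to N_{T'}$ factoring through a uniserial module; if (L2) holds, Lemma~\ref{lem-L-caseC} gives a non-zero map $M_T\to N_{T'}$. In either case $\Hom_{\Lambda_\G}(M_T,N_{T'})\neq 0$, so by Lemma~\ref{lem-non-zero-hom} the hom-space $\Hom_{\Kb(\proj\Lambda_\G)}(T',T[1])$ is non-zero. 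This proves (1); essentially, (1) is just the assembly of the three lemmas (Lemmas~\ref{lem-non-zero-hom}, \ref{lem-L-uniserial}, \ref{lem-L-caseC}) into a single clean statement.

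\textbf{Proof of (2).} Let $W_U$ and $W_V$ be the signed walks corresponding to $U$ and $V$ under the bijection of Lemma~\ref{lem-SW-biject}. By part (1) applied with $(T,T')=(U,V)$ and then with $(T,T')=(V,U)$, both $\Hom_{\Kb(\proj\Lambda_\G)}(V,U[1])$ and $\Hom_{\Kb(\proj\Lambda_\G)}(U,V[1])$ vanish if and only if none of (U2) and (L2) holds for \emph{either} ordering of the pair $(W_U,W_V)$. So it remains to check that this symmetric non-existence of (U2)/(L2)-configurations is equivalent to $W_U,W_V$ being non-crossing (at all maximal common subwalks and all intersecting vertices) and satisfying the sign condition. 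The heart of the matter is the following dictionary, which is read off from the shapes of the cyclic suborderings in (U2) and (L2):
\begin{itemize}
\item (U2)(i), for \emph{some} ordering, with $e_i=e_j'$, together with $\e(e_i)=+$ and $\e'(e_j')=-$, is exactly the failure of the sign condition (either the self sign condition, when the edge $E=E'$ appears in both walks with opposite signs, or the pairwise sign condition at a coinciding endpoint).
\item (U2)(ii)--(iv), for some ordering, describe precisely an intersecting vertex $v$ with neighbourhoods $\{a,b\}$ in one walk and $\{c,d\}$ in the other, arranged so that one of the two ``crossing'' signed cyclic orderings forbidden by (NC3) occurs; allowing the roles of $W_U$ and $W_V$ to be swapped recovers both forbidden patterns $(a^+,b^-,c^+,d^-)_v$ and $(a^+,b^-,c^-,d^+)_v$, while the two patterns permitted by (NC3) are exactly those for which no (U2) configuration can be formed.
\item (L2), for some ordering, describes precisely a maximal common subwalk $Z$ given by $z=(t_1,\dots,t_\ell)$ at which the neighbourhood cyclic orderings violate (NC2) (with (NC1) automatically encoded in the condition $\e(t_k)=\e'(t_k)$); the two orderings of $(W_U,W_V)$ correspond to the two ``bad'' arrangements, and the two arrangements allowed by (NC2) are exactly those for which no (L2) configuration arises.
\end{itemize}
Granting this dictionary, the equivalence in (2) is immediate: both hom-spaces vanish $\iff$ no (U2) or (L2) configuration in either ordering $\iff$ no violation of the sign condition, no crossing at an intersecting vertex, and no violation of (NC1)/(NC2) at a maximal common subwalk $\iff$ $W_U,W_V$ are non-crossing and satisfy the sign condition.

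\textbf{Main obstacle.} The conceptual content is light once Lemmas~\ref{lem-L-uniserial} and \ref{lem-L-caseC} are in hand, so part (1) should be short. The delicate part of (2) is bookkeeping: one must verify that the \emph{list} of cyclic-subordering patterns appearing in (U2)(ii)--(iv) and in (L2), taken over \emph{both} orderings of the pair and over all choices of half-walk representatives $w\in W$, $w'\in W'$, matches \emph{exactly} the complement of the patterns allowed by (NC2) and (NC3) — no more and no less. In particular one has to be careful about the ``virtuality'' side conditions: (NC3) only constrains the situation when at most one of $a,b,c,d$ is virtual, and correspondingly Lemmas~\ref{lem-M-struc} and \ref{lem-N-struc} (which drive the (U2) case) have hypotheses forcing the relevant half-edges to be genuine; one should check these exclusions line up, and that the cases excluded from (NC2)/(NC3) (coinciding endpoints of both walks) are precisely the cases that fall under the sign condition instead. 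I would organise this as a short case analysis keyed to the three forms (\ref{eq-LZ}), (\ref{eq-LM}), (\ref{eq-LW}) for (L2) and the three sub-cases (ii),(iii),(iv) for (U2), in each case drawing the local picture and comparing with the pictures in the definitions of (NC2) and (NC3).
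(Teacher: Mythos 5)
Your part (1) is exactly the paper's proof: it just assembles Lemmas \ref{lem-non-zero-hom}, \ref{lem-L-uniserial} and \ref{lem-L-caseC}. Your strategy for (2) — apply (1) to both orderings of the pair and match the (U2)/(L2) configurations against failures of the non-crossing and sign conditions — is also the paper's. The problem is that the dictionary you actually write down is wrong in the entries involving (NC1), and this is not a deferrable bookkeeping detail: as stated, the equivalence in (2) does not close up.

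Concretely, (U2)(ii) is not an (NC3) violation. The hypothesis $\ol{e_{i-1}}=\ol{e_{j-1}'}$ forces this half-edge to be genuine (the virtual edges $\ol{e_0}$ and $\ol{e_0'}$ cannot coincide when $\e(e_i)=+\neq-=\e'(e_j')$), so $e_{i-1}=e_{j-1}'$ is a half-edge lying in both walks with opposite signs; the vertex is then not an intersecting vertex with respect to these neighbourhoods (they share $\ol{e_{i-1}}$), and what fails is (NC1) at the maximal common subwalk containing $e_{i-1}$. Similarly, (U2)(i) at an interior coincidence $e_i=e_j'$ is a failure of (NC1), not of the sign condition; and conversely a sign-condition failure with $e_1\neq e_1'$ produces (U2)(iii) via the virtual edges $\ol{e_0}=\vir_{-\e(e_1)}(e_1)$, $\ol{e_0'}=\vir_{-\e'(e_1')}(e_1')$ — a case deliberately excluded from (NC3) because two of the four half-edges are virtual — not (U2)(i). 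The correct table, which is what the paper records, is: (NC1) fails $\leftrightarrow$ (U2)(i) or (ii); (NC2) fails $\leftrightarrow$ (L2); (NC3) fails $\leftrightarrow$ (U2)(iii) or (iv); sign condition fails $\leftrightarrow$ (U2)(i) or (iii), always for one of the two orderings of the pair. With your dictionary, an interior failure of (NC1) either goes undetected (your (L2) bullet requires $\e(t_k)=\e'(t_k)$, i.e.\ it requires (NC1) to hold rather than encoding its failure) or, if one stretches your first bullet to cover it, gets mislabelled as a sign-condition failure, which it need not be; either way the stated equivalence of (2) fails for such pairs. Once the table is corrected, the rest of your argument goes through.
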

Note that the right-hand side of (2) is not equivalent to saying $\{W_U,W_V\}$ is admissible, as it does not require $W_U$ and $W_V$ to be admissible.
\begin{proof}
(1) This follows from combining Lemma \ref{lem-non-zero-hom}, \ref{lem-L-uniserial}, and \ref{lem-L-caseC}.

(2) First, we use the following table to clarify the relation between the non-crossing and signs conditions, and the conditions in (U2), (L2).
In each row of the table, the condition in the first entry fails to hold for $\{W_U, W_V\}$ implies that one of the conditions in the second entry holds for $(W,W')=(W_U,W_V)$ or $(W,W')=(W_V, W_U)$.
For example, if $\{W_U,W_V\}$ fails (NC1), then at least one of the pairs $(W_U,W_V)$ or $(W_V,W_U)$ satisfies one of the conditions (U2)(i) or (U2)(ii).
On the other hand, if a condition appears on the right-hand side of the table, then one of the corresponding left-hand side conditions will fail.
For example, say (U2)(i) holds for $(W_U,W_V)$, then $\{W_U,W_V\}$ fails (NC1) or the sign condition.
\begin{center}
\begin{tabular}{c|ll}
$\begin{array}{c} \text{Condition which fails}\\ \text{for }\{W_U,W_V\} \end{array}$  &  &  Condition which holds for $(W_U,W_V)$ or $(W_V,W_U)$ \\
\hline 
(NC1) & & (U2)(i) or (ii) \\
(NC2) & & (L2) \\
(NC3) & & (U2)(iii) or (iv) \\
sign condition & & (U2)(i) or (iii)
\end{tabular}
\end{center}
Now the statement follows by applying (1) to $(T,T')=(U,V)$ and to $(T,T')=(V,U)$ simultaneously.
\end{proof}

\begin{proof}[Proof of Theorem \ref{mainbij}]
Suppose $U,V$ are (possibly the same) complexes in $\gcx\Lambda_\G$.
By definition, $U\oplus V$ being pretilting means that the four hom-spaces $\Hom(U,U[1])$, $\Hom(U,V[1])$, $\Hom(V,V[1])$, $\Hom(V,U[1])$ are all zero. By Proposition \ref{lem-hom-to-comb}, this is equivalent to $\{W_U,W_V\}$ being an admissible set.
This implies that a (basic) two-term pretilting complex corresponds to precisely an admissible set of signed walks.  The first bijection in the statement of the theorem is just the special case when $U=V$.
More generally, we get that $T_\W$ is pretilting for any admissible set $\W$ of signed walks.

Since any two-term pretilting complex $T$ is partial, if $T\oplus U$ is a two-term pretilting complex with $T$ tilting, then $U\in \add T$.  
Translating this to the combinatorial side, we obtain that $T_\W$ is tilting precisely when $\W$ is complete.
\end{proof}

\begin{proof}[Proof of Proposition \ref{2tilt}]
Note that the set $\AW(\G)$, and hence $\CW(\G)$, does not depend on the multiplicity of $\G$; i.e. we have natural bijections $\AW(\G)\to\AW(\G')$ and $\CW(\G)\to\CW(\G')$.
These yield bijections $\psi:\ipt\Lambda_\G\to\ipt\Lambda_{\G'}$ and
(by abusing notation) $\psi:\ttilt\Lambda_\G\to\ttilt\Lambda_{\G'}$ by Theorem \ref{mainbij}.

We are left to show that $\psi$ preserves the partial order.
Let $T,T'\in\ttilt\Lambda_\G$ with $T\geq T'$.
This means that $\Hom(X,X'[1])=0$ for all indecomposable summands $X$ and $X'$ of $T$ and $T'$ respectively.
Let $W_X$ be the signed walk corresponding to an indecomposable summand $X$ of $T$.
Recall from the construction (cf. Lemma \ref{lem-SW-biject}) that $W_X$ is independent of the multiplicities on the vertices.
Hence, the indecomposable summand $\psi(X)$  of $\psi(T)$ (resp. $\psi(X')$ of $\psi(T')$) corresponds also to $W_X$ (resp. $W_{X'}$).
Therefore, as Proposition \ref{lem-hom-to-comb} (1) is independent of multiplicity, we have $\Hom(\psi(X), \psi(X')[1])=0$; hence, $\psi(T)\geq \psi(T')$, i.e. $\psi$ is order-preserving.
\end{proof}

\section{Tilting-discrete Brauer graph algebras}\label{sec-application}
\subsection{Preliminaries}\label{sec-application-prelim}
We first recall some results about tilting mutation theory from \cite{AI,A,AIR}. 
Throughout this subsection, $\Lambda$ is assumed to be a finite dimensional \emph{symmetric} algebra.  Most of the facts stated here have analogues for general finite dimensional algebras by replacing the word ``tilting" with ``silting"; see \emph{loc. cit.} for the details.

Let $\mathcal{A}$ be a full additive subcategory of $\CC=\Kb(\proj\Lambda)$ or $\mod\Lambda$.
A map $f:X\to Y$ in $\CC$ is \emph{left minimal} if all maps $g:Y\to Y$ with $gf=f$ are isomorphisms.
A map $f:U\to X$ in $\CC$ is called a \emph{left $\mathcal{A}$-approximation} of $U$
if $X$ belongs to $\mathcal{A}$ and $\Hom_{\CC}(f,C)$ is surjective for any $C$ in $\mathcal{A}$.
We say that $\mathcal{A}$ is \emph{covariantly finite} in $\CC$ if 
for all $U$ in $\CC$, there exists a left $\mathcal{A}$-approximation.
Dually, we define right minimality, right $\mathcal{A}$-approximations, and contravariantly finite subcategories in $\CC$.
A full subcategory is called \emph{functorially finite} if it is covariantly and contravariantly finite in $\CC$. 

\begin{definitiontheorem}\cite{AI}
Let $T$ be a basic tilting complex in $\Kb(\proj\Lambda)$ with a decomposition $T=X\oplus M$.
A \emph{left tilting mutation} $\mu_X^-(T)$\label{sym-muXT} of $T$ with respect to $X$ is a (basic) tilting complex given by $Y\oplus M$, where $Y$ is the (well-defined) object fitting into the following triangle:
\[\xymatrix{
X \ar[r]^f & M' \ar[r] & Y \ar[r] & X[1]
}\]
where $f$ is a minimal left $\add M$-approximation of $X$.
Dually, one also has another tilting complex given by \emph{right tilting mutation} $\mu_X^+(T)$ of $T$ with respect to $X$.

A \emph{tilting mutation} means a left or right tilting mutation; it is called \emph{irreducible} if $X$ is indecomposable.
\end{definitiontheorem}

\begin{definition}\cite{AI}
The \emph{tilting quiver} of $\Lambda$, denoted by $\T_\Lambda$\label{sym-TLambda}, is defined as follows:
\begin{itemize}
\item The set of vertices is $\tilt\Lambda$.
\item Draw an arrow $T\to U$ if $U$ is an irreducible left mutation of $T$.
\end{itemize}  Note that this is precisely the Hasse quiver of the poset $(\tilt\Lambda,\geq)$.
A connected component of $\T_\Lambda$ is said to be \emph{canonical} if it contains $\Lambda$.
\end{definition}

\begin{definition}
A symmetric algebra $\Lambda$ is said to be \emph{tilting-connected} if
the tilting quiver of $\Lambda$ is connected.
We say that $\Lambda$ is \emph{tilting-discrete} if for any positive integer $\ell$,
there exist only finitely many tilting complexes $T$ in $\tilt\Lambda$ satisfying $\Lambda\geq T\geq\Lambda[\ell]$.
\end{definition}
It was shown in \cite[Section 3]{A} that if $\Lambda$ is tilting-discrete, then it is tilting-connected.

Two classes of tilting-discrete symmetric algebras are 
found in the second author's previous works.  Namely, the local  algebras in \cite{AI} and the representation-finite algebras in \cite{A}.

We refer to \cite{AI} for more general examples of tilting/silting-connected algebras.
On the other hand, in a joint work of the second author with Grant and Iyama \cite{AGI}, we know that there exist non-tilting-connected symmetric algebras.
In any case, it is not easy to answer the following question.

\begin{question}
When is a symmetric algebra tilting-connected, or even tilting-discrete?
\end{question}

The next subsection is devoted to answering the tilting-discrete part of the question for Brauer graph algebras.

We first look at some properties when the set $\ttilt\Lambda$ is finite, or equivalently (by Proposition \ref{prop-noCommonSummand} (ii)), when the set $\ipt\Lambda$ is finite.

\begin{proposition}\cite{A}\label{prop-locTiltDiscreteMutate}
If $\ttilt\Lambda$ is a finite set,
then any two-term tilting complex can be obtained from $\Lambda$ by iterated irreducible left tilting mutation.
\end{proposition}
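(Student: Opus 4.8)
The plan is to reduce the claim to a known consequence of tilting mutation theory from \cite{A} applied to a ``two-term truncation'' of the situation. Since $\ttilt\Lambda$ is finite, Proposition \ref{prop-noCommonSummand}(ii) tells us $\ipt\Lambda$ is finite as well, so every two-term pretilting complex is a summand of a two-term \emph{tilting} complex, and there are only finitely many of the latter. The key observation is that the interval $[\Lambda[1],\Lambda]=\{T\in\tilt\Lambda\mid \Lambda\geq T\geq\Lambda[1]\}$ in the poset $(\tilt\Lambda,\geq)$ is, by the Definition-Theorem of \cite{AI,A} quoted above, precisely $\ttilt\Lambda=\ntilt{2}\Lambda$; so this interval is finite.

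First I would recall the general fact (from \cite[Section 3]{A}, see also the discussion around Proposition \ref{prop-locTiltDiscreteMutate}) that whenever a closed interval $[\Lambda[\ell],\Lambda]$ in $\tilt\Lambda$ is a finite set, every element of that interval is reachable from $\Lambda$ by a finite sequence of irreducible left tilting mutations all of whose intermediate terms again lie in the interval. The mechanism is the standard one for Hasse quivers of finite posets with a maximum: given $T$ in the interval with $T\neq\Lambda$, one shows there is some indecomposable summand $X$ of $T$ such that the irreducible left mutation $\mu_X^-(T)$ is strictly larger than $T$ in the partial order and still lies in the interval; iterating, and using finiteness to guarantee termination, produces a path up to $\Lambda$, which reversed is a path of irreducible left mutations from $\Lambda$ down to $T$. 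The only thing to check for our statement is that this abstract argument applies with $\ell=1$, i.e. that an irreducible left mutation of a two-term tilting complex, performed so as to move downward toward $\Lambda[1]$ — equivalently upward toward $\Lambda$ when read in reverse — stays two-term; but this is automatic, since the interval $[\Lambda[1],\Lambda]$ is convex in $(\tilt\Lambda,\geq)$ and mutation within a Hasse-quiver path keeps us inside the interval $\ntilt 2\Lambda$.

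The one genuinely substantive point, and the step I expect to be the main obstacle, is establishing that the Hasse-quiver path obtained can be taken to consist of \emph{irreducible} (rather than merely arbitrary) left mutations, and that such a path exists at all — i.e. that any two distinct comparable two-term tilting complexes are connected by an arrow, or a chain of arrows, in the Hasse quiver restricted to $\ntilt 2\Lambda$. For this I would invoke the theory in \cite{AI,A}: in a finite interval of $(\tilt\Lambda,\geq)$, covers are realised by irreducible mutations, because a minimal left $\add M$-approximation decomposes and one can ``mutate one summand at a time''; more precisely, any left mutation factors as a composite of irreducible ones through intermediate tilting complexes still lying in the interval. Feeding this into the previous paragraph completes the argument. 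In short: $\ttilt\Lambda$ finite $\Rightarrow$ the interval $\ntilt 2\Lambda=[\Lambda[1],\Lambda]$ is a finite poset with maximum $\Lambda$ whose covering relations are irreducible left mutations $\Rightarrow$ every vertex is joined to $\Lambda$ by a directed path, which is the assertion.
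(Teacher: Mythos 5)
The paper offers no proof of this proposition: it is imported from \cite{A} (it is a special case of the results in \cite[Section 3]{A}, whose engine is a key lemma of \cite{AI}). So you are reconstructing the cited argument, and your reconstruction has the right overall shape — finiteness of the interval $\ttilt\Lambda=\{T\in\tilt\Lambda\mid\Lambda\geq T\geq\Lambda[1]\}$ together with the fact that comparable tilting complexes are linked by irreducible mutations — but one step is stated backwards in a way that, read literally, breaks the induction.

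You assert that for $T\neq\Lambda$ in the interval there is an indecomposable summand $X$ with $\mu_X^-(T)$ \emph{strictly larger} than $T$. That is impossible: a left mutation always satisfies $T>\mu_X^-(T)$, which is exactly why the arrows of the Hasse quiver $\T_\Lambda$ point from $T$ to its irreducible \emph{left} mutations. The ascent from $T$ to $\Lambda$ would have to use irreducible \emph{right} mutations, after which your reversal trick (right mutations read backwards are left mutations) is legitimate. The cleaner route — and the one underlying \cite{A} — dispenses with the reversal entirely: the key lemma of \cite{AI} states that for tilting complexes $P>Q$ there exists an irreducible left mutation $P'$ of $P$ with $P>P'\geq Q$. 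Applying this with $P=\Lambda$ and $Q=T$ and iterating yields a strictly decreasing chain $\Lambda>T_1>T_2>\cdots\geq T$; each $T_i$ lies in $[\Lambda[1],\Lambda]$ automatically, being squeezed between $\Lambda$ and $T\geq\Lambda[1]$, so finiteness of $\ttilt\Lambda$ forces the chain to terminate, and it can only terminate at $T$. This also repairs a second soft spot in your version: an ascending chain in a finite poset terminates only at \emph{some} maximal element, and you would still owe an argument that this maximal element is $\Lambda$ rather than something merely maximal among the complexes you can reach; the descending formulation keeps $T$ as a lower bound at every step and so never loses its target.
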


The following is a special case of \cite[Theorem 2.4]{AM}.

\begin{proposition}\cite{AM}
\label{tilting-discrete}
Let $\Lambda$ be a symmetric algebra.
If for any tilting complex $P$ in the canonical connected component of $\T_\Lambda$, the set 
$\ttilt\End_{\Kb(\proj\Lambda)}(P)$ is finite,
then $\Lambda$ is tilting-discrete.
In particular, it is tilting-connected.
\end{proposition}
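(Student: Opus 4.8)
The plan is to establish, by induction on $\ell$, that $\{T\in\tilt\Lambda\mid\Lambda\geq T\geq\Lambda[\ell]\}$ is finite for every positive integer $\ell$; this is exactly the definition of tilting-discreteness, and the ``in particular'' clause then follows from \cite[Section 3]{A}. Two ingredients will be used repeatedly. The first is reduction along a tilting complex (see \cite{AI}): for a basic tilting complex $P$ of a symmetric algebra $\Lambda$ with $\Gamma:=\End_{\Kb(\proj\Lambda)}(P)$, there is a triangle equivalence $\Kb(\proj\Lambda)\xto{\sim}\Kb(\proj\Gamma)$ sending $P\mapsto\Gamma$ and $P[1]\mapsto\Gamma[1]$; it therefore induces a poset isomorphism $\{T\mid P\geq T\geq P[\ell]\}\cong\{S\mid\Gamma\geq S\geq\Gamma[\ell]\}$ and maps the canonical connected component of $\T_\Lambda$ onto that of $\T_\Gamma$. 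In particular $\Gamma$ is again symmetric, and, because the equivalence is compatible with forming endomorphism algebras, $\Gamma$ again satisfies the hypothesis of the proposition. The second ingredient is a two-term reduction lemma (essentially \cite{A}): for every $\ell\geq 2$ and every $T$ with $\Lambda\geq T\geq\Lambda[\ell]$, there exists a two-term tilting complex $U$ of $\Lambda$ (that is, $\Lambda\geq U\geq\Lambda[1]$) with $U\geq T\geq U[\ell-1]$.

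Granting these, the induction runs as follows. For $\ell=1$ the set in question is $\ttilt\Lambda=\ttilt\End_{\Kb(\proj\Lambda)}(\Lambda)$, which is finite by the hypothesis applied to $P=\Lambda$. For the inductive step, fix $\ell\geq 2$ and assume the claim at $\ell-1$ for all symmetric algebras satisfying the hypothesis. The two-term reduction lemma gives
\[
\{T\in\tilt\Lambda\mid\Lambda\geq T\geq\Lambda[\ell]\}\ \subseteq\ \bigcup_{U}\ \{T\in\tilt\Lambda\mid U\geq T\geq U[\ell-1]\},
\]
the union taken over the two-term tilting complexes $U$ of $\Lambda$. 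By the case $\ell=1$ there are only finitely many such $U$, and by Proposition \ref{prop-locTiltDiscreteMutate} each is obtained from $\Lambda$ by iterated irreducible left tilting mutation, hence lies in the canonical connected component of $\T_\Lambda$. For each such $U$, put $\Gamma:=\End_{\Kb(\proj\Lambda)}(U)$; by the first ingredient $\Gamma$ is symmetric and satisfies the hypothesis, and the reduction along $U$ identifies $\{T\mid U\geq T\geq U[\ell-1]\}$ with $\{S\in\tilt\Gamma\mid\Gamma\geq S\geq\Gamma[\ell-1]\}$, which is finite by the inductive hypothesis for $\Gamma$. Thus $\{T\mid\Lambda\geq T\geq\Lambda[\ell]\}$ lies in a finite union of finite sets and is finite, completing the induction.

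The main obstacle is the two-term reduction lemma: for a prescribed $T$ with $\Lambda\geq T\geq\Lambda[\ell]$, one must produce a two-term tilting complex $U$ satisfying both $U\geq T$ and $T\geq U[\ell-1]$. Here $U$ genuinely depends on $T$ --- a brutal truncation of the complex $T$ is not tilting in general --- so the construction must go through the tilting/silting reduction machinery of \cite{AI,A}. A secondary, more bookkeeping point is to check carefully that the hypothesis transfers along the equivalence induced by $U$: one needs that this equivalence carries the canonical component of $\T_\Lambda$ onto that of $\T_\Gamma$ and matches up endomorphism algebras, so that finiteness of $\ttilt\End_{\Kb(\proj\Lambda)}(P)$ for all $P$ in the canonical component of $\T_\Lambda$ really does imply the analogous statement for $\Gamma$. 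The remaining steps --- the base case, the assembly of the finite union, and passing from tilting-discreteness to tilting-connectedness via \cite[Section 3]{A} --- are routine given the results recalled in the excerpt.
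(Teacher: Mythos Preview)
The paper does not provide its own proof of this proposition; it is imported verbatim as a special case of \cite[Theorem~2.4]{AM}, so there is nothing in the paper to compare your argument against directly.

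That said, your sketch follows the strategy used in \cite{A,AM}: induct on $\ell$, cover the interval $[\Lambda[\ell],\Lambda]$ by intervals of smaller length based at two-term tilting complexes in the canonical component, and transfer each such interval to the endomorphism algebra via the derived equivalence. You are also right that the passage of the hypothesis along the equivalence is routine, and that Proposition~\ref{prop-locTiltDiscreteMutate} places the relevant $U$'s in the canonical component.

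One genuine caution about your ``two-term reduction lemma'' as you formulate it: the inequality $T\geq U[\ell-1]$ does not follow from $T\geq\Lambda[\ell]$ together with $\Lambda\geq U\geq\Lambda[1]$, since $U[\ell-1]\geq\Lambda[\ell]$ goes the wrong way. In \cite{A,AM} the reduction is organised somewhat differently: the key input is that whenever $\ttilt\End_{\Kb(\proj\Lambda)}(P)$ is finite and $P>T$, there exists an irreducible left mutation $P'$ of $P$ with $P'\geq T$. Iterating this (each $P_i$ being in the canonical component, so the hypothesis applies again) produces the descent needed for the induction. Your sketch would go through once the reduction lemma is replaced by, or justified via, this irreducible-mutation step; as written it is close in spirit but the stated interval containment $U\geq T\geq U[\ell-1]$ needs an argument you have not supplied.
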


\subsection{Conditions for tilting-discreteness}\label{sec-tilt-discrete}
The aim of this subsection is to show the following theorem.

\begin{theorem}\label{TD of type odd}
Let $\G$ be a Brauer graph.
Then the following are equivalent:
\begin{enumerate}[{\rm (i)}]
\item $\Lambda_\G$ is tilting-discrete;
\item $\ttilt\Lambda_\G$ is a finite set;
\item $\G$ contains at most one odd cycle and no even cycle.
\end{enumerate}
\end{theorem}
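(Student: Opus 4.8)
The plan is to prove the cyclic chain (i)$\Rightarrow$(ii)$\Rightarrow$(iii)$\Rightarrow$(i), with the last implication carrying essentially all of the content. The implication (i)$\Rightarrow$(ii) is immediate: specialising the definition of tilting-discreteness to $\ell=1$ says exactly that $\ttilt\Lambda_\G=\{T\in\tilt\Lambda_\G\mid\Lambda_\G\geq T\geq\Lambda_\G[1]\}$ is finite.

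For (ii)$\Leftrightarrow$(iii), I would first observe that $\ttilt\Lambda_\G$ is finite if and only if $\ipt\Lambda_\G$ is finite: every basic two-term tilting complex has exactly $|\Lambda_\G|$ indecomposable summands, and by Proposition \ref{prop-noCommonSummand}(ii) every indecomposable two-term pretilting complex arises as a summand of one of them, so the two sets are finite simultaneously. By the first bijection of Theorem \ref{mainbij} we have $\ipt\Lambda_\G\cong\AW(\G)$, and $\AW(\G)$ is visibly a function of the underlying ribbon graph of $\G$ alone. Hence $\ttilt\Lambda_\G$ is finite if and only if $\AW(\G)$ is, and by Proposition \ref{type-odd} the latter holds precisely when $\G$ has at most one odd cycle and no even cycle, i.e. exactly (iii).

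The heart of the proof is (iii)$\Rightarrow$(i), which I would obtain from Proposition \ref{tilting-discrete}: it suffices to show that $\ttilt\End_{\Kb(\proj\Lambda_\G)}(P)$ is finite for every tilting complex $P$ in the canonical connected component of the tilting quiver $\T_{\Lambda_\G}$. Any such $P$ is obtained from $\Lambda_\G$ by a finite chain of irreducible tilting mutations, so, inducting on the length of the chain, it is enough to establish: (a) the endomorphism algebra of an irreducible tilting mutation of a Brauer graph algebra $\Lambda_{\G'}$ is again a Brauer graph algebra $\Lambda_{\G''}$, with $\G''$ a flip of $\G'$ in the sense of \cite{A2} --- this is Proposition \ref{prop-flip der equiv} together with the computation of Appendix \ref{sec-app-endo}, which identifies these endomorphism algebras as symmetric special biserial, hence Brauer graph, algebras; and (b) a flip sends a Brauer graph with at most one odd cycle and no even cycle to another such Brauer graph. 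Granting (a) and (b), each $\End(P)$ in the canonical component is $\Lambda_{\G''}$ for some $\G''$ satisfying (iii); then $\ttilt\End(P)$ is finite by the equivalence (ii)$\Leftrightarrow$(iii) applied to $\G''$, and Proposition \ref{tilting-discrete} delivers tilting-discreteness of $\Lambda_\G$. (When $\G$ is itself a tree one may shortcut this step, since then $\Lambda_\G$ is representation-finite and hence tilting-discrete by \cite{A}.)

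The hard part will be (b): showing that the combinatorial condition ``at most one odd cycle and no even cycle'' is preserved by a flip. I would prove this by unwinding the explicit definition of the flip and checking, case by case according to whether the flipped edge is a loop, lies on the unique cycle, or lies off it, that the flip leaves the first Betti number of the graph unchanged and modifies the length of the cycle (when there is one) only by an even amount, so that its parity is preserved. Morally this cannot fail, since a flip produces a derived-equivalent algebra and such numerical data are derived invariants; but it is a purely combinatorial verification that the argument genuinely needs. Once (b) is in hand, the remaining pieces --- Theorem \ref{mainbij}, Proposition \ref{type-odd}, Proposition \ref{prop-flip der equiv}, Appendix \ref{sec-app-endo}, and Proposition \ref{tilting-discrete} --- fit together to complete the proof.
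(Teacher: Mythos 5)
Your proposal is correct and follows essentially the same route as the paper: (i)$\Rightarrow$(ii) from the definition, (ii)$\Leftrightarrow$(iii) via Theorem \ref{mainbij} and Proposition \ref{type-odd}, and (iii)$\Rightarrow$(i) via Proposition \ref{tilting-discrete} combined with Proposition \ref{prop-flip der equiv} and the flip-invariance of the cycle condition. Your step (b), including the parity argument that a flip changes the cycle length only by $0$ or $\pm 2$, is exactly the paper's Lemma \ref{flip and type odd}.
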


First of all, we start by recalling the Brauer graph mutation theory from \cite{Kau,A2}.
\begin{definition}[Flips of Brauer graph] \label{def-flip}
Let $\G=(V,H,s,\ol{\,\cdot\,},\sigma;\mm)$ be a Brauer graph and $E$ be an edge of $\G$.  If $\G$ has more than one edge, then the \emph{left flip of $\G$ at $E$} is a Brauer graph $\mu^-_E(\G)$\label{sym-muEG} given by $(V,H,s',\ol{\,\cdot\,},\sigma';\mm)$, where $s'$ and $\sigma'$ are defined as follows.
\[
\begin{array}{c|l|c|c}
\text{half-edge $h$} & \multicolumn{1}{|c|}{\text{condition}} & s'(h) & \sigma'(h) \\ [0.2em] \hline
\hline
e \in E & \sigma^{-1}(e)\notin E;\;\; \sigma(e)\neq \ol{e}    & s\left(\ol{\sigma^{-1}(e)}\right) & \ol{\sigma^{-1}(e)} \rule{0pt}{1.5em} \\ [0.6em]
 & \sigma^{-1}(e)\notin E;\;\; \sigma(e)=\ol{e} & s\left(\ol{\sigma^{-1}(e)}\right) & \ol{e}   \\[0.6em]
 & \sigma^{-1}(e) = e     & s(e)   & e        \\[0.6em]
 & \sigma^{-1}(e) =\ol{e} & s\left(\ol{\sigma^{-1}(\ol{e})}\right)=s'(\ol{e})   & \ol{\sigma^{-1}(\ol{e})}                     \\[0.6em]
\hline
f\notin E & f=\sigma^{-1}\left(\ol{\sigma^{-1}(e)}\right)& s(f) & e \rule{0pt}{1.5em} \\ [0.6em]
          & f=\sigma^{-1}(e);\;\; \sigma(e)=\ol{f}        & s(f) & e \\ [0.6em]
          & f=\sigma^{-1}(e);\;\; \sigma(e)=\ol{e};\;\;\; \sigma(\ol{e})=\ol{f}     & s(f) & e \\ [0.6em]
          & f=\sigma^{-1}(e);\;\; \sigma(e)=\ol{e};\;\;\; \sigma(\ol{e})\neq\ol{f}  & s(f) & \sigma^2(e) \\ [0.6em]
          & f=\sigma^{-1}(e);\;\; \sigma(e)\neq\ol{e},\ol{f}     & s(f) & \sigma(e)  \\ [0.6em]
          & \text{all other cases}          & s(f) & \sigma(f) \\ [0.6em]
\end{array}
\]
If $\G$ has one edge, then the left flip of $\G$ is defined to be itself.

The \emph{opposite (Brauer) graph} of $\G$ is the Brauer graph $\G^\op = (V,H,s,\ol{\,\cdot\,},\sigma^{-1})$\label{sym-Gop}.  The \emph{right flip of $\G$ at $E$} is the Brauer graph $\mu_E^+(\G):=\mu_E^-(\G^\op)^\op$.
\end{definition}

The simplest way to present a graphical presentation of the left flip is given below.
If $E$ is not a loop, i.e. $u,v$ in the graphs below are distinct vertices:
\[
\xymatrix@C=25pt@R=15pt{
a \ar@{-}[dr]_{}="au"& & & & c \ar@{-}[dl]^{}="cu" & a \ar@{-}[dr]_{}="au2"& & & & c \ar@{-}[dl]^{}="cu2"\\
  & u \ar@{-}[rr]^{E} & &  v&  \ar[r]^{\mu_E^-}&   & u  & & v&  \\
b \ar@{-}[ur]^{}="bu"& & & & d \ar@{-}[ul]_{}="du" & b \ar@{-}[uurrrr]^{E}\ar@{-}[ur]^{}="bu2"& & & & d \ar@{-}[ul]_{}="du2"
\ar@{.}@/_/ "au";"bu"
\ar@{.}@/^/ "cu";"du"
\ar@{.}@/_/ "au2";"bu2"
\ar@{.}@/^/ "cu2";"du2"
}
\]
Here we allow some (or all) of $a,b,v$ to be the same vertex; similarly for $c,d,u$.

If $E$ is a loop, then we have one of the following two cases.
\begin{center}
\begin{picture}(250,120)(0,-10)
\put(0,7.5){{$v$}}
\put(-50,7.5){{$a$}}
\put(50,7.5){{$b$}}
\put(-43,10){\line(1,0){42}}
\put(7,10){\line(1,0){40}}
\put(-52,13){\line(-1,1){20}}
\put(-52,7){\line(-1,-1){20}}
\put(56,13){\line(1,1){20}}
\put(56,7){\line(1,-1){20}}
\qbezier(0,15)(-18,30)(-20,10)
\qbezier(-20,10)(-18,-10)(0,5)
\put(-20,25){{$E$}}

\put(90,10){\vector(1,0){50}}
\put(110,20){{$\mu_E^-$}}

\put(220,7.5){{$v$}}
\put(170,7.5){{$a$}}
\put(270,7.5){{$b$}}
\put(177,10){\line(1,0){42}}
\put(227,10){\line(1,0){40}}
\put(168,13){\line(-1,1){20}}
\put(168,7){\line(-1,-1){20}}
\put(276,13){\line(1,1){20}}
\put(276,7){\line(1,-1){20}}
\qbezier(265,15)(210,30)(200,10)
\qbezier(200,10)(190,-10)(175,5)
\put(210,25){{$E$}}

\put(0,57.5){{$u$}}
\put(50,57.5){{$v$}}
\put(7,60){\line(1,0){40}}
\qbezier(0,65)(-20,85)(3,90)
\qbezier(3,90)(26,85)(6,65)
\put(-23,75){{$E$}}

\put(90,60){\vector(1,0){50}}
\put(110,70){{$\mu_E^-$}}

\put(170,57.5){{$u$}}
\put(220,57.5){{$v$}}
\put(177,60){\line(1,0){40}}
\qbezier(220,65)(200,85)(223,90)
\qbezier(223,90)(246,85)(226,65)
\put(240,75){{$E$}}
\end{picture}
\end{center}
Here $a,b$ can be the same vertex. cf. \cite[Section 5]{A2}
\begin{remark}
\begin{enumerate}[(1)]
\item We will abuse the notation $\mu_E^\pm(\G)$ to mean ``$\mu_E^-(\G)$, and respectively $\mu_E^+(\G)$," for ease of stating results.  Similar abuses of notations will also be adopted for mutations of tilting complexes.
\item Intuitively, if one draws $\G$ on a piece of paper and places a mirror perpendicular to the paper next to $\G$, then $\G^\op$ is the reflection of $\G$ in the mirror.
\item The opposite ring $(\Lambda_\G)^\op$ of $\Lambda_\G$ is isomorphic to $\Lambda_{\G^\op}$.
\item One can also define the right flip explicitly, then the left flip will be given by $\mu_E^-(\G)=\mu_E^+(\G^\op)^\op$.
\item The left/right flip was found by Kauer, \cite[Lemma 2.7-2.10]{Kau2} details all possible situations.
This operation is termed as Kauer move in \cite{MS}; the terminology ``flip" was adopted in \cite{A} to align with the flip of triangulations in cluster mutation theory. 
Indeed, if the underlying ribbon graph of $\G$ is a triangulation of a Riemann surface, then $\mu_E^\pm(\G)$ is precisely the flip of the triangulation at the arc $E$.
\end{enumerate}
\end{remark}

\begin{proposition}{\rm \cite{Kau2}}\label{prop-flip der equiv}
Let $\G$ be a Brauer graph and $E$ be an edge of $\G$.  Then the endomorphism ring of the tilting complex $\mu_{P_E}^\pm(\Lambda_\G)$ is isomorphic to $\Lambda_{\mu_E^\pm(\G)}$.
\end{proposition}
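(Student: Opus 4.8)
The plan is to compute the tilting complex $\mu_{P_E}^-(\Lambda_\G)$ explicitly as a two-term complex, identify it with $T_\W$ for a suitable complete admissible set $\W$, and then verify that the resulting endomorphism algebra has quiver and relations matching $\Lambda_{\mu_E^-(\G)}$. First I would recall that $\mu_{P_E}^-(\Lambda_\G)$ is obtained from $\Lambda_\G = \bigoplus_{E'} P_{E'}$ by replacing the summand $P_E$ with the cone $Y$ of a minimal left $\add(\bigoplus_{E'\neq E}P_{E'})$-approximation $f\colon P_E\to M'$. Since $\Lambda_\G$ is symmetric, this cone is a two-term complex concentrated in degrees $-1$ and $0$, and by Theorem \ref{mainbij} it corresponds to an admissible walk $W_E$. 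A direct inspection of $Q_\G$ shows that the minimal left approximation $f$ is given by the arrow(s) out of $P_E$ emanating from the two cyclic orderings around the endpoints of $E$; concretely, if $E$ is not a loop with endpoints $u,v$, then $Y = (P_E \to P_{\sigma^{-1}(e)} \oplus P_{\sigma^{-1}(\ol e)})$ (with the obvious degenerations when $e$ or $\ol e$ is truncated or $\val$ is small), and the associated walk $W_E$ is the short walk $(\sigma^{-1}(\ol e)^{-}, e^{+}, \sigma^{-1}(e)^{-})$ up to the usual virtual-edge conventions. Comparing with Definition \ref{def-flip}, one checks case by case that this is precisely the edge of $\mu_E^-(\G)$ replacing $E$; so the complete admissible set for $\mu_{P_E}^-(\Lambda_\G)$ is $\W = \{W_E\}\cup\{W_{E'}\colon E'\neq E\}$ where $W_{E'}$ is the stalk walk of $E'$.

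Next I would translate the endomorphism ring computation into the ribbon combinatorics of $\G$. The vertices of the quiver of $\End(\mu_{P_E}^-(\Lambda_\G))$ are in bijection with the edges of $\mu_E^-(\G)$ (this is immediate from $|\mu_{P_E}^-(\Lambda_\G)| = |\Lambda_\G|$ and the bijection of Proposition \ref{2tilt}). For the arrows and relations, I would use the standard fact that $\Hom$-spaces and compositions between the indecomposable summands $T_W$, $T_{W'}$ are governed by common subwalks: a basis element of $\Hom_{\Kb}(T_W, T_{W'})$ in degree zero corresponds to a maximal common subwalk along which the signatures agree appropriately, and compositions of such maps correspond to concatenation of common subwalks. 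This is the degree-zero analogue of the analysis carried out in Section \ref{sec-proof} for $\Hom(T',T[1])$ — the decorated Loewy diagram technique of Subsection \ref{subsec-ribbon-loewy} identifies $\Hom(M_{T_W},M_{T_{W'}})$ with maps through uniserial and non-uniserial string modules, which in turn read off directly from cyclic suborderings around the vertices of $\G$. Performing this for the particular set $\W$ above — where all but one walk is a trivial stalk — is a finite and local computation: the only vertices of $\mu_E^-(\G)$ whose incident arrows change are those lying in the small neighbourhood of $E$ modified by the flip, and one matches the arrows $\alpha_{g,\sigma'(g)}$ and the three Brauer relations (Br1)–(Br3) of $\Lambda_{\mu_E^-(\G)}$ against the computed $\End$-algebra one cyclic ordering at a time.

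Finally, the right-flip statement follows from the left-flip statement by the duality $\Lambda_\G^{\op}\cong\Lambda_{\G^{\op}}$ and the compatibility $\mu_{P_E}^+(\Lambda_\G) = (\mu_{P_E}^-(\Lambda_\G^{\op}))^{\op}$ together with $\mu_E^+(\G)=\mu_E^-(\G^{\op})^{\op}$; applying the algebra isomorphism $\op$ to the identification proved in the non-opposite case yields the claim. I expect the main obstacle to be the bookkeeping in the case analysis of Definition \ref{def-flip}: there are many degenerate configurations (when $E$ is a loop, when $\sigma^{-1}(e)\in E$, when $\sigma(e)=\ol e$, when half-edges are truncated or valencies are $1$ or $2$), and for each one must both compute the correct minimal approximation $f$ (hence the correct walk $W_E$ with its virtual-edge decorations) and then verify that the degree-zero $\Hom$-combinatorics reproduce exactly the arrows and Brauer relations of $\Lambda_{\mu_E^-(\G)}$. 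Organizing these degenerations so that they are handled uniformly — ideally by phrasing everything through the virtual-edge-augmented cyclic orderings of \eqref{eq-cyc-ord-vir}, which already absorb the truncated and boundary cases — is the delicate part; the homological input itself is routine given Theorem \ref{mainbij} and the machinery of Section \ref{sec-proof}.
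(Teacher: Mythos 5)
Your overall strategy --- compute the mutated complex explicitly, recognise it via Theorem \ref{mainbij}, and then read off the endomorphism algebra from walk combinatorics --- founders on the middle step. The assertion you lean on, that a basis of $\Hom_{\Kb(\proj\Lambda_\G)}(T_W,T_{W'})$ is indexed by maximal common subwalks and that composition corresponds to concatenation of subwalks, is not a ``standard fact'' and is not available anywhere in this paper: all of Section \ref{sec-proof} computes only the degree-one spaces $\Hom(T',T[1])$, via the reduction of Lemma \ref{lem-non-zero-hom} to $\Hom_\Lambda(M_T,N_{T'})$, and that reduction has no degree-zero analogue. Indeed the introduction states explicitly that the computation of the endomorphism algebras of two-term tilting complexes is deferred to a sequel. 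Computing $\Hom_{\Kb}(T_W,T_{W'})$ requires controlling null-homotopies of chain maps, which is precisely the delicate part; and even granting a combinatorial basis, you would still have to check that the Brauer relations (Br1)--(Br3) of $\Lambda_{\mu_E^-(\G)}$ hold in the endomorphism ring \emph{and} generate all relations (e.g.\ via a dimension count), which your sketch does not address. (A small additional slip: with the paper's convention $T^0=\bigoplus_{\e=+}P$, the degree $-1$ term $P_E$ of the cone forces $\e(e)=-$, so the walk of the new summand in the generic non-loop case is $(\ol{\sigma^{-1}(e)}^{+},e^{-},\sigma^{-1}(\ol{e})^{+})$, not the signed walk you wrote.)

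The paper's actual proof takes a different and more economical route. It cites \cite{A2} for the computation of the quiver of $\End(\mu_{P_E}^{\pm}(\Lambda_\G))$ and for the identification of the resulting Brauer graph with $\mu_E^{\pm}(\G)$, and observes (Remark \ref{rmk-der-eq}) that the only broken step in that argument --- it relied on a false claim from \cite{Pog} --- is the verification that the endomorphism algebra is special biserial. Appendix \ref{sec-app-endo} patches exactly this: it lists the finitely many new irreducible maps between the summands of the mutated complex as explicit chain maps and verifies (SB2) and (SB2') by exhibiting explicit homotopies (or ruling them out), case by case according to the local shape of $E$ in $\G$; symmetry of the endomorphism ring comes from Rickard's theorem, and the right-flip case follows by the opposite-algebra duality you correctly describe. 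If you want a self-contained proof along your lines, you would first have to develop the degree-zero Hom combinatorics for string complexes; as it stands, your argument assumes the hardest part.
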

\begin{remark}\label{rmk-der-eq}
An English translation for part of the proof by Kauer can be found in \cite{Kau}.
For the ease of reference, we attach the complete proof in the Appendix \ref{sec-app-endo}.
Another explicit calculation can be found in \cite{Dem}, which proves the analogue of this proposition for a more generalised class of algebras.
Other proofs were attempted in \cite{An2,A2}, but these proofs rely on a claim from \cite{Pog}, namely, that any algebra stably equivalent to a self-injective special biserial algebra is also special biserial.
However, the proof of this claim is incorrect \cite[Appendix]{AIP}; the validity of the claim is still unknown at the time of writing.
In the case when the underlying field is not of characteristic 2, we were informed by Alexandra Zvonareva that these proofs can be fixed by \cite{AZ2}, where they prove that the class of Brauer graph algebras is closed under derived equivalences.
\end{remark}

\begin{lemma}\label{flip and type odd}
Let $\G$ be a Brauer graph and $E$ an edge of $\G$.
If $\G$ contains $c$ odd cycles with $c\in\{0,1\}$, and no even cycle, then so does $\mu_E^\pm(\G)$.
\end{lemma}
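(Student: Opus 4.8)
The plan is to read off the effect of a flip on the \emph{underlying graph} of $\G$ (the Brauer structure being irrelevant here) and to control just two quantities: the first Betti number $\beta(\G)$ --- the number of edges minus the number of vertices, plus one --- and one parity count. The right flip reduces to the left one: since $\mu_E^+(\G)=\mu_E^-(\G^\op)^\op$ and neither $(-)^\op$ nor passing from $\G$ to $\G^\op$ alters the underlying graph, the cycles of $\mu_E^+(\G)$ are exactly those of $\mu_E^-(\G^\op)$, and $\G^\op$ satisfies the hypothesis whenever $\G$ does; so it suffices to treat $\mu_E^-$.

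I would first record two elementary facts. \emph{(a)} For a connected graph, ``at most one odd cycle and no even cycle'' is equivalent to ``$\beta\le 1$, and if $\beta=1$ the unique cycle is odd'': a connected graph has at most one simple cycle exactly when $\beta\le 1$, and when $\beta\ge 2$ two distinct fundamental cycles with respect to a spanning tree are either one of them even --- an even cycle --- or both odd, in which case their symmetric difference is a non-empty edge-disjoint union of cycles of even total size, so it contains either no odd cycle (hence an even one) or at least two odd cycles; the hypothesis fails in every case. \emph{(b)} Reading the table in Definition \ref{def-flip} (or the pictures beneath it), as an undirected graph $\mu_E^-(\G)$ is $(\G-E)+E'$: the edge $E=\{e,\ol{e}\}$ is deleted, every other edge keeps its endpoints, and the inserted edge is $E'=\{b,c\}$ with $b=s(\ol{\sigma^{-1}(e)})$ and $c=s(\ol{\sigma^{-1}(\ol{e})})$ --- the far endpoints of the edges immediately preceding $E$ around $s(e)$ and $s(\ol{e})$ (with the convention, covering the degenerate rows of the table, that an endpoint of valency $1$ stays put). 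In particular $b$ is joined to $s(e)$ inside $\G-E$ (along that preceding edge, or $b=s(e)$), and likewise for $c$ and $s(\ol{e})$; hence $\mu_E^-(\G)$ is again connected, the vertex set and edge set are unchanged, and $\beta$ is a flip invariant.

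Granting these, suppose $\G$ satisfies the hypothesis in the form of \emph{(a)}. If $\beta(\G)=0$ then $\G$ and $\mu_E^-(\G)$ are trees and there is nothing to prove. If $\beta(\G)=1$, let $\CC$ be the unique, and odd, cycle of $\G$; then $\mu_E^-(\G)$ has a unique cycle $\CC'$, which I must show is odd. If $E\notin\CC$, then $E$ is a bridge: $\G-E$ consists of a tree and a piece containing $\CC$, with $b$ in the piece of $s(e)$ and $c$ in that of $s(\ol{e})$, so $E'$ re-bridges them and $\CC'=\CC$. If $E\in\CC$, write $\tau:=\G-E$ (a tree), $u:=s(e)$, $v:=s(\ol{e})$; then $\CC=\{E\}\cup P$ with $P$ the $\tau$-geodesic from $u$ to $v$, of length $|\CC|-1$, while $\CC'=\{E'\}\cup Q$ with $Q$ the $\tau$-geodesic from $b$ to $c$. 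Here $u$ and $v$ have valency $\ge 2$, and ``$\beta=1$ together with no even cycle'' forbids the edge preceding $E$ at $u$ (resp.\ $v$) from being a loop, parallel to $E$, or a chord of $\CC$; hence $\mathrm{dist}_\tau(b,u)=\mathrm{dist}_\tau(c,v)=1$ --- unless $\G$ is the single-loop graph, for which $\mu_E^-$ is the identity. Since $\tau$ is a tree, hence bipartite, $\mathrm{dist}_\tau(b,c)\equiv\mathrm{dist}_\tau(b,u)+\mathrm{dist}_\tau(u,v)+\mathrm{dist}_\tau(v,c)\pmod 2$, so $|Q|\equiv 1+(|\CC|-1)+1\equiv 0\pmod 2$, whence $|\CC'|=|Q|+1$ is odd, as required.

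I expect the real work to lie in justifying \emph{(b)}: pinning down, from the dense flip table, the two vertices $b$ and $c$ to which $E$ gets re-glued, and checking that every degenerate configuration (a loop $E$, a valency-$1$ endpoint, $E'$ degenerating to a loop, and so on) stays compatible with the ``distance $1$'' assertions and with the parity count. The parity computation itself is immediate once the setup is in place.
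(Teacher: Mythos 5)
Your proposal is correct and follows essentially the same route as the paper: a flip preserves the numbers of vertices and edges and connectedness, hence the first Betti number, so the whole issue reduces to checking that the parity of the unique cycle survives when $E$ lies on it. Where the paper simply reads off from the graphical presentation of flips that the cycle length can only change by $0$ or $\pm 2$, you derive the same parity statement from bipartiteness of the tree $\G-E$ together with the distance-one location of the new endpoints $b,c$ --- a cleaner justification of that step, with only the degenerate loop/valency-one rows of the flip table left to verify, exactly as you note.
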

\begin{proof}
First note that $\mu_E^\pm(\G)$ preserves connectedness.  In particular, if $\G$ is a (connected) tree (i.e. $c=0$), or equivalently $|V|=|H/\ol{\,\cdot\,}|+1$, then so is $\mu_E^\pm(\G)$.

Assume now that $\G$ has exactly one odd cycle and has no even cycle.  Now we have $|V|=|H/\ol{\,\cdot\,}|$, and by the same argument, $\mu_E^\pm(\G)$ must then contain precisely one cycle.  We are left to show that the parity of the cycle length remains unchanged after a flip.

Since flipping an edge does not alter the rest of the graph, the odd cycle of $\G$ stays as the same subgraph if we flip at an edge which is not in the cycle.  If $E$ is contained in the odd cycle, say of length $\ell$, of $\G$, then observe using the graphical presentation of flips that the cycle length can only be $\ell$, or $\ell-2$, or $\ell+2$.
\end{proof}

Now we are ready to prove Theorem \ref{TD of type odd}.
\begin{proof}[Proof of Theorem \ref{TD of type odd}]
It is evident that the implication (i)$\Rightarrow$(ii) holds.

The implications (ii)$\Leftrightarrow$(iii) follows from Theorem \ref{mainbij} and Proposition \ref{type-odd}. 

We show that the implication (iii)$\Rightarrow$(i) holds.
Let $P$ be a tilting complex of $\Lambda_\G$ in the canonical connected component of $\T_{\Lambda_\G}$.
Then it follows from Proposition \ref{prop-flip der equiv} that the endomorphism algebra $\Gamma:=\End_{\Kb(\proj\Lambda_\G)}(P)$ of $P$ is a Brauer graph algebra
whose Brauer graph $\G'$ is obtained by a series of left and right flips starting from $\G$.
By Lemma \ref{flip and type odd}, $\G'$ then has the same number (zero or one) of odd cycles as $\G$, and it also has no even cycle.  Combining Theorem \ref{mainbij} and Proposition \ref{type-odd}, we get that $\ttilt\Gamma$ is a finite set.  Hence, $\Lambda_\G$ is tilting-discrete by Proposition \ref{tilting-discrete}.
\end{proof}

The following corollary is immediate from (the proof of) Theorem \ref{TD of type odd}.

\begin{corollary}\label{cor-type-odd-equiv}
Let $\G$ be a Brauer graph which contains at most one odd cycle and no even cycle, and let $T$ be a tilting complex of $\Lambda_\G$.
Then the endomorphism algebra of $T$ in $\Kb(\proj\Lambda_\G)$ is isomorphic to $\Lambda_{\G'}$ for some $\G'$ with the same number of odd cycles, no even cycle, and the same multiplicity as $\G$.  In particular, any algebra derived equivalent to $\Lambda_\G$ is also a Brauer graph algebra.
\end{corollary}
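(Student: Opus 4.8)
The plan is to bootstrap on Theorem \ref{TD of type odd}, Proposition \ref{prop-flip der equiv}, and Lemma \ref{flip and type odd}, which is what the remark that the corollary is ``immediate from the proof of Theorem \ref{TD of type odd}'' is pointing at. First I would use Theorem \ref{TD of type odd}: the hypothesis on $\G$ makes $\Lambda_\G$ tilting-discrete, hence tilting-connected by \cite[Section 3]{A}, so the tilting quiver $\T_{\Lambda_\G}$ is connected. Therefore the given tilting complex $T$ lies in the canonical connected component, and there is a finite path in $\T_{\Lambda_\G}$, i.e. a finite sequence of irreducible (left or right) tilting mutations $\Lambda_\G = T_0, T_1, \ldots, T_k = T$, joining $\Lambda_\G$ to $T$.

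Next I would convert this chain of tilting mutations into a chain of flips of Brauer graphs, by induction on the length $k$. The case $k = 0$ is trivial. For the inductive step, suppose $\Gamma_i := \End_{\Kb(\proj\Lambda_\G)}(T_i) \cong \Lambda_{\G_i}$ for some Brauer graph $\G_i$. By Rickard's theorem \cite{Ric} there is a triangle equivalence $F_i : \Db(\Lambda_\G) \xto{\sim} \Db(\Gamma_i)$ sending $T_i$ to the regular complex $\Gamma_i$; such an equivalence restricts to an isomorphism of posets $\tilt\Lambda_\G \to \tilt\Gamma_i$, and hence of Hasse quivers, carrying $T_i$ to $\Gamma_i$ and any irreducible mutation of $T_i$ to an irreducible mutation of $\Gamma_i$ (this is the compatibility of tilting mutation with derived equivalences recorded in \cite{AI}). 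Thus $F_i(T_{i+1})$ is an irreducible mutation $\mu_{P_E}^\pm(\Gamma_i)$ of the trivial tilting complex over $\Gamma_i \cong \Lambda_{\G_i}$, and Proposition \ref{prop-flip der equiv}, applied over $\Lambda_{\G_i}$, gives $\End_{\Kb(\proj\Lambda_\G)}(T_{i+1}) \cong \End_{\Kb(\proj\Gamma_i)}(F_i(T_{i+1})) \cong \Lambda_{\mu_E^\pm(\G_i)}$. Putting $\G_{i+1} := \mu_E^\pm(\G_i)$ closes the induction, and at the end $\End_{\Kb(\proj\Lambda_\G)}(T) \cong \Lambda_{\G'}$, where $\G' := \G_k$ is obtained from $\G$ by $k$ successive flips.

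It then only remains to read off the stated invariants of $\G'$. Flips preserve connectedness, so $\G'$ is a genuine Brauer graph; by Definition \ref{def-flip} a flip leaves the multiplicity function untouched, so $\G'$ and $\G$ have the same multiplicity; and by Lemma \ref{flip and type odd} each flip preserves both the property of having no even cycle and the number $c \in \{0,1\}$ of odd cycles, so by induction along the chain $\G'$ has no even cycle and the same number of odd cycles as $\G$. Finally, for the ``in particular'' clause: if $\Gamma$ is any algebra derived equivalent to $\Lambda_\G$, then $\Gamma \cong \End_{\Kb(\proj\Lambda_\G)}(T)$ for a tilting complex $T$ of $\Lambda_\G$ by Rickard's theorem (should the module-side convention force one to take $\Gamma^\op$ here instead, this is harmless, as $(\Lambda_\G)^\op \cong \Lambda_{\G^\op}$ is again a Brauer graph algebra of the same combinatorial type), and the first part shows $\Gamma$ is a Brauer graph algebra.

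The step I expect to require the most care is the inductive conversion in the second paragraph: one has to be confident that transporting the single Hasse arrow $T_i \to T_{i+1}$ across the derived equivalence $F_i$ yields a single irreducible mutation of the regular complex over $\Gamma_i$, so that precisely one flip $\G_i \to \G_{i+1}$ is performed at each step. This is exactly the mutation-compatibility of derived equivalences from \cite{AI}, used together with a fresh application of Proposition \ref{prop-flip der equiv} at each stage; everything else --- connectedness of $\T_{\Lambda_\G}$ and the invariance of multiplicity and cycle parity under flips --- is routine bookkeeping.
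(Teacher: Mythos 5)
Your proposal is correct and follows essentially the same route as the paper: the corollary is deduced from tilting-discreteness (hence connectedness of $\T_{\Lambda_\G}$), the iterated application of Proposition \ref{prop-flip der equiv} along a chain of irreducible mutations transported across the successive derived equivalences, Lemma \ref{flip and type odd} together with the multiplicity-invariance of flips, and Rickard's theorem for the final clause. The only difference is that you spell out the inductive transport of mutations in more detail than the paper, which simply declares this step immediate from the proof of Theorem \ref{TD of type odd}.
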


The class of algebras described in Theorem \ref{TD of type odd} appears in \cite{An1} as (precisely) the class of Brauer graph algebras with non-degenerate Cartan matrices.  Note that the Grothendieck group of the stable module category of such a Brauer graph algebra is finite, and vice versa.

For an arbitrary symmetric algebra $\Lambda$, we do not know if the non-degeneracy of its Cartan matrix, or the finiteness of the set $\ttilt\Lambda$, are equivalent conditions for tilting-discreteness.  We also do not know if the finiteness, or even if the number of elements, of the set $\ttilt\Lambda$ is derived invariant.  In fact, one of the original motivations of this work and its sequel is to see if one can take advantages of the rich combinatorics of Brauer trees in order to count the number of two-term tilting complexes for Brauer tree algebras.

Suppose $\G$ is a Brauer graph such that $\Lambda_\G$ is tilting-discrete. 
Let $\G_0$ be the associated \emph{multiplicity-free Brauer graph}, i.e. the Brauer graph with the same ribbon graph structure but multiplicity $\mm\equiv 1$.
Then the associated Brauer graph algebra $\Lambda_{\G_0}$ is of finite type when $c=0$, or of one-parametric Euclidean type when $c=1$.  See for example \cite{Sko} for the details.

Also note that the multiplicity-free Brauer graph algebras form precisely the class of trivial extensions of gentle algebras \cite{Sch}.  For readers who are familiar with silting theory \cite{AI} and silting-discreteness \cite{A}, \cite[Proposition 6.8]{BPP} asserts that all derived-discrete algebras of finite global dimension are silting-discrete.  However, the trivial extensions of these algebras also contain multiplicity-free Brauer graph algebras which lie outside the class presented in Theorem \ref{TD of type odd}.  In particular, this shows that silting-discreteness is one of the many properties destroyed by taking trivial extension.

Although Theorem \ref{TD of type odd} gives the precise condition for a Brauer graph algebra to be tilting-discrete, 
we still do not know if there is a tilting-connected non-tilting-discrete Brauer graph algebra.  
We remark that, as shown in another on-going work of the second named author with Grant and Iyama \cite{AGI}, 
the Brauer graph algebra whose underlying graph is a digon (i.e. cycle of length 2) is neither tilting-discrete nor tilting-connected.

Our final remark to this result is the problem of classifying derived equivalence classes of Brauer graph algebras.
Although Kauer has shown in \cite{Kau,Kau2} that any Brauer graph algebra is derived equivalent to a special type of Brauer graph (called Brauer double star) algebra, it is unknown in general whether an algebra derived equivalent to a Brauer graph algebra must also be a Brauer graph algebra.
At the time of writing, it seems that this problem can be solved at least in the case when the characteristic of the underlying field is not 2 \cite{AZ2} - as we have already mentioned in Remark \ref{rmk-der-eq}.
Moreover, there exist distinct Brauer double stars whose associated algebras are derived equivalent in general.
There are some special cases (for instance, when the underlying graph is a tree, c.f. \cite{MH}) where this choice is unique up to a rearrangement of the multiplicities.
We also remark that \textit{loc. cit.} only shows Kauer's result in a subclass (called generalised Brauer trees), instead of determining the derived equivalence class for this subclass.

As far as we know, three derived equivalence classes are already known before our work; more detailed information can be found in the survey \cite{Sko}.
The first one is the class of all Brauer tree algebras, i.e. the Brauer graph is a tree, with at most one vertex having multiplicity greater than 1.
The second class is given by Brauer graphs which have at most one odd cycle, no even cycle, and all multiplicities being 1.
The third class is given by Brauer graphs whose underlying graphs are trees, all but two vertices have multiplicity 1, and the two exceptional vertices have multiplicities (at most) 2.
Therefore, Corollary \ref{cor-type-odd-equiv} is a generalisation of these results but with an entirely different, and multiplicity-independent, approach.

\pagebreak

\appendix

\section{On the endomorphism algebra of an irreducible mutation}\label{sec-app-endo}

The aim of this appendix is to show:
\begin{lemma}\label{lem-flip-alg}
For a Brauer graph $\G$, let $\Gamma =\Bbbk Q/I$ be the endomorphism algebra of the tilting complex $\mu_E^-(\Lambda_\G)$, where $E=\{e,\ol{e}\}$ is an edge of $\G$.
Then $\Gamma$ is symmetric special biserial.
\end{lemma}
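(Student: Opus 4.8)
The plan is to compute the endomorphism algebra $\Gamma=\End_{\Kb(\proj\Lambda_\G)}(T)$ of the mutated tilting complex $T:=\mu_E^-(\Lambda_\G)$ explicitly, and then verify directly the two defining conditions of a symmetric special biserial algebra: \emph{(a)} at each vertex of $Q$ at most two arrows start and at most two arrows end, and \emph{(b)} for every arrow $\alpha$ there is at most one arrow $\beta$ with $\beta\alpha\notin I$ and at most one arrow $\gamma$ with $\alpha\gamma\notin I$. Symmetry of $\Gamma$ is automatic: $T$ is a tilting complex, so $\Gamma$ is derived equivalent to $\Lambda_\G$, and being a symmetric algebra is invariant under derived equivalence; since $\Lambda_\G$ is symmetric, so is $\Gamma$. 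Hence only \emph{(a)} and \emph{(b)} require work.

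First I would set up the indecomposable summands of $T$. Writing $T=T_E\oplus\bigoplus_{F\ne E}P_F$, the mutated summand is $T_E=(P_E\xrightarrow{f}M')$, concentrated in degrees $-1$ and $0$, where $f$ is a minimal left $\add(\bigoplus_{F\ne E}P_F)$-approximation of $P_E$. Being an indecomposable summand of a two-term tilting complex, $T_E$ is a non-stalk two-term pretilting complex; in particular $T_E\in\gcx\Lambda_\G$, so by Lemma \ref{lem-SW-biject} and Theorem \ref{mainbij} we have $T_E=T_W$ for an admissible signed walk $W$, and by the dictionary of Subsection \ref{subsec-appen-walk-to-string} the string module $M_{T_E}=H^0(T_E)$, its syzygy $N_{T_E}=H^{-1}(T_E)=\Omega^2 M_{T_E}$, the projective $M'$, and the map $f$ are all read off the cyclic orderings around the two endpoints of $E$ in $\G$. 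In particular $M'$ is a direct sum of at most two indecomposable projectives (corresponding to the edges neighbouring $E$ around its endpoints), and $f$ is a diagonal matrix with components given by explicit short paths in $Q_\G$.

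Next I would compute the bound quiver of $\Gamma$. Its vertices are the edges of $\G$, the vertex $E$ now corresponding to $T_E$; for summands other than $T_E$ the spaces $e_F\Gamma e_{F'}=\Hom_{\Kb(\proj\Lambda_\G)}(T_F,T_{F'})$ coincide with the corresponding spaces over $\Lambda_\G$, so the bound quiver is unchanged away from the vertex $E$. The arrows into and out of $E$, and the relations through them, are extracted from $\Hom_{\Kb(\proj\Lambda_\G)}(T_E,P_F)$, $\Hom_{\Kb(\proj\Lambda_\G)}(P_F,T_E)$ and $\Hom_{\Kb(\proj\Lambda_\G)}(T_E,T_E)$; these are governed by the components of $f$ together with homomorphisms between the string modules $M_{T_E}$, $N_{T_E}$ and the various $P_F$, which I would compute using Crawley-Boevey's basis (Theorem \ref{thm-CB}) visualised through the decorated Loewy diagrams of Section \ref{sec-proof}. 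This must be organised into cases according to the local picture of $\G$ at $E$ — $E$ not a loop with $e$ and $\ol e$ both non-truncated; $\ol e$ (or $e$) truncated; $E$ a loop — together with the sub-cases corresponding to the rows of the table in Definition \ref{def-flip} (whether $\sigma^{-1}(e)\in E$, whether $\sigma(e)=\ol e$, coincidences among the neighbouring half-edges, one endpoint being $E$ itself, and so on). In each case I would list the complete set of arrows at, and relations through, the affected vertices and check \emph{(a)} and \emph{(b)} by inspection; as a by-product the bound quiver produced in each case coincides with that of $\Lambda_{\mu_E^-(\G)}$, which simultaneously re-establishes Proposition \ref{prop-flip der equiv}.

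The hard part is not any single step but the bookkeeping in this case analysis: the mutation alters $Q_\G$ only in a neighbourhood of $E$, yet precisely which arrows survive, which are rerouted along $\sigma^{\pm1}$, and which new ones appear is sensitive to the degeneracies above, so the difficulty lies in making the verification exhaustive while keeping it readable. This is expected, since the computation is essentially Kauer's original one with the erroneous appeal to \cite{Pog} replaced by a hands-on check; alternatively one could quote the (later) result of \cite{AZ2} that Brauer graph algebras are closed under derived equivalence, but the aim here is a self-contained argument.
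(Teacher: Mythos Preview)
Your proposal is correct and follows essentially the same approach as the paper: symmetry via derived invariance, then an explicit case-by-case computation of the endomorphism ring in terms of maps of complexes, checking the special biserial conditions directly at the vertices affected by the mutation. The only organisational differences are that the paper imports the quiver identification $Q\cong Q_{\mu_E^-(\G)}$ (hence (SB1), (SB1')) from \cite[6.3]{A2} rather than recomputing it, and then verifies (SB2), (SB2') by writing down the new arrows as explicit chain maps and checking each composite for null-homotopy by hand, rather than appealing to Crawley-Boevey's basis or the signed-walk dictionary; its case split is also slightly different (the primary dichotomy is whether $\ol{e}=\sigma(e)$, not whether $E$ is a loop).
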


The original proof in \cite{Kau2} were split into multiple lemmas, and only shown explicitly in the easier cases.
Moreover, the cited reference, which is the thesis of Kauer written completely in German, is not accessible to the general public.
Thus, we attach hereby an explicit computation which handle (almost) all cases for ease of reference.
Our notation here is also specifically chosen so that one can see immediately that $\End(\mu_E^-(\Lambda_\G)) \cong \Lambda_{\mu_E^-(\G)}$.

\medskip

As we have mentioned in Remark \ref{rmk-der-eq}, a step (\cite[Proposition 2.5]{A2}) in the proof of Proposition \ref{prop-flip der equiv} relies on an unproved claim in \cite{Pog}.
The only use of this step in \cite{A2} is to show that $\Gamma$ is symmetric special biserial - specifically, the first step of \cite[6.3.1 (2)]{A2}, and also that of \cite[6.3.2 (2)]{A2}.

Lemma \ref{lem-flip-alg} fills the gap in the proof of the main result of \cite{A2}, i.e. Proposition \ref{prop-flip der equiv}.
Note that the same statement for the endomorphism algebra of the right mutation $\mu_{P_E}^+(\Lambda_\G)$ can be proved dually, and we will not give any detail here.
We start our proof now.

Since $\Gamma$ is derived equivalent to $\Lambda_\G$, it follows from a result of Rickard that $\Gamma$ is symmetric.

By definition, the algebra $\Gamma$ is special biserial if the following are satisfied:
\begin{itemize}
\item[(SB1)] Any vertex of $Q$ is the head of at most two arrows.
\item[(SB1')] Any vertex of $Q$ is the tail of at most two arrows.
\item[(SB2)] For an arrow $\beta\in Q_1$, there is at most one arrow $\alpha$ with $\alpha\beta \notin I$.
\item[(SB2')] For an arrow $\beta\in Q_1$, there is at most one arrow $\gamma$ with $\beta\gamma \notin I$. 
\end{itemize}

(SB1) and (SB1') follows from \cite[6.3.1(1),6.3.2(1)]{A2}, which shows that the quiver $Q$ of $\Gamma$ is the same as $Q_{\mu_E^-(\G)}$, so it remains to check that (SB2) and (SB2') hold.
For covenience, we say that (SB2) holds for an arrow $\beta$, if there is at most one arrow $\alpha$ with $\alpha\beta\notin I$; similarly for (SB2').

We can use Definition \ref{def-flip} to see that $Q$ changes at most six of the arrows from $Q_\G$.
Recall that we identify arrows in $Q_\G$ with irreducible maps between indecomposable projective modules.
Similarly, we can identify arrows in $Q$ with irreducible maps between indecomposable pretilting complexes.
Note that the vertex $E\in (Q_\G)_0=Q_0$ now represents a two-term pretilting complex $T$ in place of $P_E$.
In what follows, we present explicitly those maps (arrows) that are removed from $Q_\G$ and the new maps that are added to form $Q$.
For convenience, define $f$ (resp. $f'$) to be $\sigma^{-1}(e)$ (resp. $\sigma^{-1}(\ol{e})$) whenever $e$ (resp. $\ol{e}$) is not truncated, and $g$ (resp. $g'$) to be $\sigma^{-1}(\ol{f})$ (resp. $\sigma^{-1}(\ol{f'})$) whenever $\ol{f}$ (resp. $\ol{f'}$) is not truncated.

There are four cases to consider.

\subsection*{Case 1}
Suppose $E$ is a loop such that there is some $e\in E$ with $\ol{e}=\sigma(e)$.
The summand $P_E$ in $\Lambda_\G$ is replaced by  $T:=(P_E \xto{d} P_{f} \oplus P_{f})$, where $d=[(f|e), (f|\ol{e})]^t$, to form $\mu_E^-(\Lambda_\G)$.

\subsection*{Case 1a}
Suppose furthermore that $\ol{f}\neq \sigma(\ol{e})$, or equivalently $g\neq\ol{e}$.

If $\ol{f}$ is not truncated in $\G$ and $f$ is not truncated in $\mu_E^-(\G)$, then the quiver $Q$ is given by removing the subset $\{\alpha_{\ol{e},\sigma(\ol{e})}, \alpha_{e,\ol{e}}, \alpha_{f,e}, \alpha_{g,\ol{f}} \} $
of $Q_\G$ and adding the arrows given by
\begin{align}
\beta_{f,\sigma(\ol{e})} :=  \alpha_{f,\sigma(\ol{e})}:P_{\sigma(\ol{e})} \to P_{f}  \hspace*{3cm}\notag 
\end{align}
\begin{align}
\xymatrix@R=35pt{ T \;\;=\ar@<-9pt>[d]_{\beta_{g,e}}  \\ P_g\;\;=} \quad 
\xymatrix@C=70pt@R=35pt{ (P_E \ar[r]|{\left[\begin{smallmatrix}(f|e)\\(f|\ol{e})\end{smallmatrix}\right]} \ar[d]_{0} & P_f\oplus P_f)\ar[d]^{\left[(g|\ol{f})\;,\;\;0\right]} \\ (0 \ar[r] & P_g) \\  } \notag
\end{align}
\begin{align}
\xymatrix@R=30pt{ T\;\;=\ar@<-9pt>[d]_{\beta_{e,\ol{e}}}  \\ T\;\;=} \quad 
\xymatrix@C=70pt@R=30pt{ (P_E \ar[r]|{\left[\begin{smallmatrix}(f|e)\\(f|\ol{e})\end{smallmatrix}\right]} \ar[d]_{(e|\ol{e})} & P_f\oplus P_f)\ar[d]^{\left[\begin{smallmatrix}0&\mathrm{id}\\0&0\end{smallmatrix}\right]} \\ (P_E \ar[r]|{\left[\begin{smallmatrix}(f|e)\\ (f|\ol{e})\end{smallmatrix}\right]} & P_f\oplus P_f) } \notag 
\end{align}
\begin{align}
\xymatrix@R=35pt{ P_f \;\;=\ar@<-9pt>[d]_{\beta_{\ol{e},\ol{f}}}  \\ T\;\;=} \quad 
\xymatrix@C=70pt@R=35pt{ (0 \ar[r] \ar[d]_{0} & P_{\ol{f}})\ar[d]^{\left[\begin{smallmatrix}0\\\mathrm{id}\end{smallmatrix}\right]} \\ (P_E \ar[r]|{\left[\begin{smallmatrix}(f|e)\\(f|\ol{e})\end{smallmatrix}\right]} & P_f\oplus P_f). } \notag 
\end{align}
In the case when $\ol{f}$ is truncated in $\G$, since $\alpha_{g,\ol{f}}=\alpha_{\ol{f},\ol{f}}$ is not an arrow of $Q_\G$, we only need to add new arrows.
In the case when $f$ is truncated in $\mu_E^-(\G)$, since \cite[6.2]{A2} implies that $\beta_{f,\sigma(\ol{e})}:P_{\sigma(\ol{e})}=P_f\to P_f$ is not irreducible, we can simply ignore anything involving $\beta_{f,\sigma(\ol{e})}$ in what follows.
Nevertheless, all the computations below are valid as compositions between maps of the given form, whether they are monomial of $Q_1$ or not.

We first check that (SB2) and (SB2') hold for $\beta_{f,\sigma(\ol{e})}, \beta_{g,e}, \beta_{e,\ol{e}}, \beta_{\ol{e},\ol{f}}$.
\begin{align}
\beta_{f,\sigma(\ol{e})}: & \begin{cases}
(\sigma^{-1}(f)\,\vline\,f)\beta_{f,\sigma(\ol{e})} & \notin I; \\ \beta_{\ol{e},\ol{f}}\beta_{f,\sigma(\ol{e})} & \in I; \\
\beta_{f,\sigma(\ol{e})}\left(\sigma(\ol{e})\,\vline\,\sigma^2(\ol{e})\right) & \notin I; \\
\beta_{f,\sigma(\ol{e})}\left(\ol{\sigma(\ol{e})}\,\vline\,\sigma(\ol{\sigma(\ol{e})})\right) & \in I, \\
\end{cases} 
& \beta_{g,e}: & \begin{cases}
\left(\sigma^{-1}(g)\,\vline\,g\right)\beta_{g,e} &\notin I;\\ 
\left(\sigma^{-1}(\ol{g})\,\vline\,\ol{g}\right)\beta_{g,e} &\in I; \\
\beta_{g,e}\beta_{e,\ol{e}} &\notin I;\\
\beta_{g,e}\beta_{\ol{e},\ol{f}} &\in I,\\
\end{cases}
\notag \\
\beta_{e,\ol{e}}: & \begin{cases}
\beta_{g,e}\beta_{e,\ol{e}} &\notin I; \qquad (*)\\
\beta_{e,\ol{e}}\beta_{e,\ol{e}} &\in I;\\
\beta_{e,\ol{e}}\beta_{\ol{e},\ol{f}} &\notin I,\\
\end{cases}
& \beta_{\ol{e},\ol{f}}: & \begin{cases}
\beta_{e,\ol{e}}\beta_{\ol{e},\ol{f}} &\notin I; \qquad (*)\\ 
\beta_{g,e}\beta_{\ol{e},\ol{f}} &\in I; \qquad (*)\\
\beta_{\ol{e},\ol{f}}\left(\ol{f}\,\vline\,\sigma(\ol{f})\right) &\notin I;\\
\beta_{\ol{e},\ol{f}}\beta_{f,\sigma(\ol{e})} &\in I. \qquad (*)\\
\end{cases}\notag 
\end{align}
Note that (SB2) (resp. (SB2')) holds for $\beta$ if and only if both of the first two (resp. last two) statements in the group labelled by $\beta$ hold.
The mark $(*)$ was used to indicate the duplicated entries.

Let us start with the group labelled by $\beta_{f,\sigma(\ol{e})}$.
The first, third, and fourth conditions are easy to check as they follow directly from the Brauer relations associated to $\Lambda_\G$.
To show $\beta_{\ol{e},\ol{f}}\beta_{f,\sigma(\ol{e})} \in I$, consider the following commutative diagram:
\[
\xymatrix@R=35pt{ P_{\sigma(\ol{e})} \;\;=\ar@<-9pt>[d]_{\beta_{\ol{e},\ol{f}}\beta_{f,\sigma(\ol{e})}}  \\ T\;\;=} \quad 
\xymatrix@C=70pt@R=35pt{ (0 \ar[r] \ar[d]_{0} & P_{\sigma(\ol{e})})\ar[d]^{\left[\begin{smallmatrix} 0 \\ (f|\sigma(\ol{e}))\end{smallmatrix}\right]} \ar@{-->}[ld]|{(\ol{e}|\sigma(\ol{e}))} \\ (P_E \ar[r]|{\left[\begin{smallmatrix}(f|e)\\(f|\ol{e})\end{smallmatrix}\right]} & P_f\oplus P_f) }
\]
Then we see that the composition of maps is null-homotopy.
Hence, $\beta_{\ol{e},\ol{f}}\beta_{f,\sigma(\ol{e})}=0$ in the endomorphism algebra $\Gamma$ over the bounded homotopy category.

Consider the group labelled by $\beta_{g,e}$.
The first composition is a map concentrated only in degree 0 given by $(\sigma^{-1}(g)|\ol{f})$, so it is not null-homotopy.
The second composition is zero by the Brauer relation (Br2).
The composition $\beta_{g,e}\beta_{e,\ol{e}}$ clearly is zero in all but the $0$-th degree.
The degree 0 component is given by $[(g|\ol{f}),0]
\left[\begin{smallmatrix}
0 & \mathrm{id} \\ 0 & 0
\end{smallmatrix}\right] = \left[0,(g|\ol{f})\right] \neq 0$.
Since the degree $-1$ component of the stalk complex $P_g$ is zero, there is no map for which $(g|\ol{f})$ can factor through, meaning that the composition is not null-homotopy - as required.
The composition $\beta_{g,e}\beta_{\ol{e},\ol{f}}$ is given by the map $[(g|\ol{f}),0][0,1]^t$ concentrated in degree 0, and this is zero.

Consider the group labelled by $\beta_{e,\ol{e}}$.
The degree $-1$ component of $\beta_{e,\ol{e}}^2$ is given by $(e|\ol{e})^2=0$, and degree 0 component is given by $\left[\begin{smallmatrix}
0 & \mathrm{id} \\ 0 & 0
\end{smallmatrix}\right]^2 = 0$; hence, $\beta_{e,\ol{e}}^2=0$.
The third composition $\beta_{e,\ol{e}}\beta_{\ol{e},\ol{f}}$ is a map concentrated in degree 0 given by $\left[\begin{smallmatrix}
0 & \mathrm{id} \\ 0 & 0
\end{smallmatrix}\right]\left[\begin{smallmatrix}
0 \\ \mathrm{id}
\end{smallmatrix}\right]=\left[\begin{smallmatrix}
\mathrm{id} \\ 0
\end{smallmatrix}\right]:P_f\to P_f\oplus P_f$.
This is not null-homotopy as identity map on $P_f$ cannot factor through $P_E$.

The only remaining condition we need to check is the third row in  the $\beta_{\ol{e},\ol{f}}$-group.
The composition is determined by its degree 0 component $[0,\mathrm{id}]^t(\ol{f}\,\vline\,\sigma(\ol{f})) = [0, (\ol{f}\,\vline\,\sigma(\ol{f}))]^t : P_{\sigma(\ol{f})}\to P_f\oplus P_f$.
By the irreducibility of the arrow $(\ol{f}\,\vline\,\sigma(\ol{f}))$, which means that it cannot factor through $P_E$, we get that $\beta_{\ol{e},\ol{f}}(\ol{f}|\sigma(\ol{f}))$ is not null-homotopy.

\subsection*{Case 1b}
Recall that $E$ is a loop such that there is some $e\in E$ with $\ol{e}=\sigma(e)$.
As oppose to Case 1a, we now assume $\ol{f}=\sigma(\ol{e})$, or equivalently $g=\ol{e}$.

In this situation, the quiver $Q$ is the same as $Q_\G$ but three of the arrows are given by different morphisms.  Namely, $\alpha_{e,\ol{e}}, \alpha_{\ol{e},\ol{f}}, \alpha_{f,e}$ are replaced by $\beta_{e,\ol{e}}, \beta_{\ol{e},\ol{f}}, \beta_{f,e}$ respectively.
The definition of $\beta_{f,e}$ is similar to $\beta_{g,e}$ in the previous case - we just need to replace all the $g$'s by $f$'s.
The other two maps are given by the corresponding maps of the same notation in Case 1a.

The calculations of the required conditions are mostly the same as the last three groups in Case 1a after replacing all the $g$'s by $f$'s.
Note that (both of) the conditions $(\sigma^{-1}(\ol{g})|\ol{g})\beta_{g,e}$ and $\beta_{\ol{e},\ol{f}}\beta_{f,\sigma(\ol{e})}$ are replaced by $\beta_{\ol{e},\ol{f}}\beta_{f,e}$.
To show this composition is null-homotopy is similar to proving the condition $\beta_{\ol{e},\ol{f}}\beta_{f,\sigma(\ol{e})}$ in Case 1a:
\begin{align}
\xymatrix@R=35pt{ T \;\;=\ar@<-9pt>[d]_{\beta_{\ol{e},\ol{f}}\beta_{f,e}}  \\ T\;\;=} \quad 
\xymatrix@C=70pt@R=35pt{ (P_E \ar[r]|{\left[\begin{smallmatrix} (f|e) \\ (f|\ol{e})\end{smallmatrix}\right]} \ar[d]_{0}& P_f\oplus P_f)\ar[d]^{\left[\begin{smallmatrix} 0 & 0\\ (f|\ol{f})& 0\end{smallmatrix}\right]} \ar@{-->}[ld]|{[(\ol{e}|\ol{f}),0]} \\ (P_E \ar[r]|{\left[\begin{smallmatrix}(f|e)\\(f|\ol{e})\end{smallmatrix}\right]} & P_f\oplus P_f) } \notag
\end{align}

\subsection*{Case 2}
Suppose there is no $e\in E$ with $\ol{e}=\sigma(e)$.
The new summand in $\mu_E^-(\Lambda_G)$ is $T:= (P_E\xto{d} P_{f}\oplus P_{f'})$ where $d=\left[(f|e)\;,\;(f'|\ol{e})\right]$.
Note that if $e$ (resp. $\ol{e}$) is truncated, then we will just remove $P_f$ and $\alpha_{f,e}$ (resp. $P_{f'}$ and $\alpha_{f',\ol{e}}$).
Moreover, in such a case, one should ignore the respective entries and diagrams in what follows.

\subsection*{Case 2a}
Assume further that neither $f'=\ol{\sigma(\ol{e})}$ (equivalently, $g'=\ol{e}$) nor $f=\ol{\sigma(e)}$ (equivalently, $g=e$).

The new quiver $Q$ is
\[
\left(Q_\G \setminus \{ \alpha_{e,\sigma(e)},\alpha_{f,e},\alpha_{g,\ol{f}}, \alpha_{\ol{e},\sigma(\ol{e})},\alpha_{f',\ol{e}},\alpha_{g',\ol{f'}} \}\right) \cup \left\{
\begin{array}{rrr} \beta_{f,\sigma(e)}, & \beta_{e,\ol{f}}, & \beta_{g,e}, \\ \beta_{f',\sigma(\ol{e})},& \beta_{\ol{e},\ol{f'}},& \beta_{g',\ol{e}}\end{array}\right\},
\]
where the new arrows are given by
\begin{align}
\beta_{f,\sigma(e)} := \left(\alpha_{f,\sigma(e)}:P_{\sigma(e)} \to P_{f}\right)
\hspace*{2cm}
&
\beta_{f',\sigma(\ol{e})} := \left( \alpha_{f',\sigma(\ol{e})}:P_{\sigma(\ol{e})} \to P_{f'}\right) \hspace*{2cm} 
\notag 
\end{align}
\begin{align}
\xymatrix@R=35pt{ P_f \;\;=\ar@<-9pt>[d]_{\beta_{e,\ol{f}}}  \\ T\;\;=} \quad 
\xymatrix@C=70pt@R=35pt{ (0 \ar[r]\ar[d]_{0} & P_f\ar[d]_{\left[\begin{smallmatrix}\mathrm{id}\\0\end{smallmatrix}\right]}) \\ (P_E \ar[r]|{\left[\begin{smallmatrix}(f|e)\\(f'|\ol{e})\end{smallmatrix}\right]} & P_f\oplus P_{f'}) \\  }
&
\xymatrix@R=35pt{ P_{f'} \;\;=\ar@<-9pt>[d]_{\beta_{\ol{e},\ol{f'}}}  \\ T\;\;=} \quad 
\xymatrix@C=70pt@R=35pt{ (0 \ar[r]\ar[d]_{0} & P_{f'}\ar[d]_{\left[\begin{smallmatrix}0\\\mathrm{id}\end{smallmatrix}\right]}) \\ (P_E \ar[r]|{\left[\begin{smallmatrix}(f|e)\\(f'|\ol{e})\end{smallmatrix}\right]} & P_f\oplus P_{f'}) \\  } \notag \\
& \notag 
\end{align}
\begin{align}
\xymatrix@R=35pt{ T \;\;=\ar@<-9pt>[d]_{\beta_{g,e}}  \\ P_g\;\;=} \quad 
\xymatrix@C=70pt@R=35pt{ (P_E \ar[r]|{\left[\begin{smallmatrix}(f|e)\\(f'|\ol{e})\end{smallmatrix}\right]} \ar[d]_{0} & P_f\oplus P_{f'})\ar[d]|{\left[(g|\ol{f})\;,\;0\right]\quad} \\ (0 \ar[r] & P_g) \\  } 
&
\xymatrix@R=35pt{ T \;\;=\ar@<-9pt>[d]_{\beta_{g',\ol{e}}}  \\ P_{g'}\;\;=} \quad 
\xymatrix@C=70pt@R=35pt{ (P_E \ar[r]|{\left[\begin{smallmatrix}(f|e)\\(f'|\ol{e})\end{smallmatrix}\right]} \ar[d]_{0} & P_f\oplus P_{f'})\ar[d]|{\left[0\;,\;(g'|\ol{f'})\right]} \\ (0 \ar[r] & P_g). \\  } \notag
\end{align}
Note that, analogous to Case 1a, if $\ol{f}$ (resp. $\ol{f'}$) is truncated in $\G$, then we ignore $\alpha_{g,\ol{f}}=\alpha_{\ol{f},\ol{f}}$ (resp. $\alpha_{g',\ol{f'}}$) as an arrow of $Q_\G$, whereas $(g|\ol{f})$ in the definition of $\beta_{g,e}$ (resp. $(g'|\ol{f'})$ in the definition of $\beta_{g',\ol{e}}$) becomes the Brauer cycle $(\ol{f}|\ol{f})$ (resp. $(\ol{f'}|\ol{f'})$).
Similarly, if $f$ (resp. $f'$) is truncated in $\mu_E^-(\G)$, then we can ignore any computations involving $\beta_{f,\sigma(e)}$ (resp. $\beta_{f',\sigma(\ol{e})}$).

The following shows half of the list of conditions one needs to check - the other half can be obtained by swapping $e,f,g$ with $\ol{e},f',g'$ respectively.
\[
\begin{array}{ll|ll|ll}
\beta_{f,\sigma(e)} & & \beta_{e,\ol{f}} & & \beta_{g,e} & \\ 
\hline
\left(\sigma^{-1}(f)\,\vline\,f\right)\beta_{f,\sigma(e)} & \notin I & \beta_{g,e}\beta_{e,\ol{f}} &\notin I &
\left(\sigma^{-1}(g)\,\vline\,g\right)\beta_{g,e} &\notin I \rule{0pt}{1.5em} \\  
\beta_{e,\ol{f}}\beta_{f,\sigma(e)} & \in I & \beta_{g',\ol{e}}\beta_{e,\ol{f}} &\in I &
\left(\sigma^{-1}(\ol{g})\,\vline\,\ol{g}\right)\beta_{g,e} &\in I \\
\beta_{f,\sigma(e)}\left(\sigma(e)\,\vline\,\sigma^2(e)\right) & \notin I & \beta_{e,\ol{f}}\left(\ol{f}\,\vline\,\sigma(\ol{f})\right) &\notin I & \beta_{g,e}\beta_{e,\ol{f}} &\notin I \quad (*)\\
\beta_{f,\sigma(e)}\left(\ol{\sigma(e)}\,\vline\,\sigma(\ol{\sigma(e)})\right) & \in I &
\beta_{e,\ol{f}}\beta_{f,\sigma(e)} &\in I \quad (*) & \beta_{g,e}\beta_{\ol{e},\ol{f'}} &\in I.
\end{array}
\]
Checking conditions in the group labelled by $\beta_{f,\sigma(e)}$ (resp. $\beta_{e,\ol{f}}$, resp. $\beta_{g,e}$) is similar to the the group labelled by $\beta_{f,\sigma(\ol{e})}$ (resp. $\beta_{e,\ol{f}}$, resp. $\beta_{g,e}$) in Case 1a by suitably replacing half-edges.
The same applies to the three other groups of relations associated to $\ol{e}$.

\subsection*{Case 2b}
Now there is no $e\in E$ with $\ol{e}=\sigma(e)$
and (i) $f=\ol{\sigma(e)}$ (equivalently, $g=e$) or (ii) $f'=\ol{\sigma(\ol{e})}$ (equivalently, $g'=\ol{e}$) hold(s).
Without loss of generality, we assume that (i) holds; in the case if (ii) also hold, one just applies the following arguments again after replacing $e,f,g$ by $\ol{e},f',g'$ respectively.

Under this assumption, we will formally remove $\{\alpha_{e,\ol{f}}, \alpha_{f,e}\}$ and replace by $\{\beta_{e,\ol{f}}, \beta_{f,e}\}$.
Roughly speaking, the difference between Case 2a and Case 2b is analogous to the difference between Case 1a and Case 1b.
In particular, the modifications needed to show (SB2) and (SB2') holds in the new arrows are analogous to those in Case 1b - anything labelled by $g$ in Case 2a should now be labelled by $f$, and the relations $\beta_{e,\ol{f}}\beta_{f,\sigma(e)}=0$ and $(\sigma^{-1}(\ol{g})|\ol{g})\beta_{g,e}=0$ are replaced by ``$\beta_{e,\ol{f}}\beta_{f,e}$ is null-homotopy".
We leave these minor details as exercise for the reader.

\medskip

Finally, the (SB2) and (SB2') conditions for all other arrows (i.e. those labelled by $\alpha_{x,y}$ in $Q$) are inherited from $\Lambda_\G$ or follow from one of the conditions listed above.
It follows that $\Gamma$ indeed satisfies (SB2) and (SB2').
This completes the proof of Lemma \ref{lem-flip-alg}. \qed

\pagebreak

\section{List of notations}\label{sec-notations}
\begin{tabbing}
$z\subset w$\hspace{2.8cm}\=\parbox{4.8in}{non-empty continuous subsequence\dotfill\pageref{sym-subseq}}\\
\addsym $S_i$: {simple module corresponding to $i$}{sym-simple}
\addsym $P_i$: {indecomposable projective module corresponding to $i$}{sym-proj}
\addsym $|X|$: {number of isoclasses of the indecomposable summands of $X$}{sym-cardinal}
\addsym $\G$: {ribbon graph or Brauer graph}{sym-G}
\addsym $V$: {set of vertices}{sym-V}
\addsym $H$: {set of half-edges}{sym-H}
\addsym $s$: {emanating vertex specifier}{sym-s}
\addsym $\ol{e}$: {involution acting on $e$}{sym-ol}
\addsym $\sigma$: {cyclic ordering}{sym-sigma}
\addsym $(e,\sigma(e),\ldots)_v$: {cyclic ordering around $v$}{sym-cyclic-order-v}
\addsym $\val$: {valency}{sym-val}
\addsym $\mm$: {multiplicity function}{sym-mm}
\addsym $w=(e_1,\ldots,e_l)$: {half-walk}{sym-halfwalk}
\addsym $W$: {walk}{sym-walk}
\addsym $\e_W$: {signature}{sym-signature}
\addsym $(w;\e)$ or $(e_1^+,e_2^-,\ldots)$: {signed half-walk}{sym-signed-hw}
\addsym $\SW(\G)$: {set of signed walks}{sym-SWG}
\addsym $w\cap w'$: {set of maximal continuous subsequences common in $w$ and $w'$}{sym-common-subwalk}
\addsym $W\cap W'$: {set of maximal common subwalks of $W$ and $W'$}{sym-set-common-subwalk}
\addsym $\vir_\pm(e)$: {virtual edges associated to $e$}{sym-vir}
\addsym $\AW(\G)$: {set of admissible walks}{sym-AWG}
\addsym $\tilt\Lambda$: {set of tilting complexes}{sym-tilt}
\addsym $T\geq U$: {a partial order on tilting complexes}{sym-TgeqU}
\addsym $\ntilt{n}\Lambda$: {set of $n$-term tilting complexes}{sym-ntilt}
\addsym $\ipt\Lambda$: {set of indecomposable two-term pretilting complexes}{sym-ipt}
\addsym $\Lambda_\G$: {Brauer graph algebra associated to a Bruaer graph $\G$}{sym-LambdaG}
\addsym $Q_\G$: {quiver associated to a Brauer graph $\G$}{sym-QG}
\addsym $1_E, [e]^0$: {idempotent at $E=\{e,\ol{e}\}$}{sym-e0}
\addsym $(e|\sigma(e))$: {an arrow in $Q_\G$, or irreducible map between projectives}{sym-a-e-sig-e}
\addsym $(e|f)$: {a short path in $Q_\G$, or a short map}{sym-a-ef}
\addsym $[e]$: {a Brauer cycle, i.e. shorthand for $(e|e)$}{sym-a-ee}
\addsym $P_M$: {minimal projective presentation of $M$}{sym-PM}
\addsym $d_M$: {differential map in $P_M$}{sym-dM}
\addsym $\gcx\Lambda$: {set of certain type of short string complexes and stalk projectives}{sym-gcx}
\addsym $T_w, T_W$: {two-term complex associated to a (half-)walk ($w$ or) $W$}{sym-Tw}
\addsym $\CW(\G)$: {set of complete admissible sets of signed walks on $\G$}{sym-CWG}
\addsym $M_T$: {zeroth cohomology of the complex $T$}{sym-MT}
\addsym $N_T$: {$(-1)$-st cohomology of the complex $T$}{sym-NT}
\addsym $A$: {$\Lambda_\G/\soc\Lambda_\G$}{sym-A}
\addsym $\hd(\alpha)$: {head of an arrow or a word}{sym-head}
\addsym $\tail(\alpha)$: {tail of an arrow or a word}{sym-tail}
\addsym $\mathcal{A}$: {set of alphabets associated to $A$}{sym-alphabet-set}
\addsym $\alpha^{-1}$: {formal inverse of an arrow or a word}{sym-inverse}
\addsym $\word, \mathsf{u}$: {word or string of $A$}{sym-word}
\addsym $Q_\word$: {quiver associated to a string $\word$ of $A$}{sym-Qword}
\addsym $\eta_e$: {longest path in the hook module}{sym-hook}
\addsym $\mu_X^-(T),\mu_X^+(T)$: {mutation of the tilting complex $T$ at direct summand $X$}{sym-muXT}
\addsym $\T_\Lambda$: {Hasse quiver of $\tilt(\Lambda)$}{sym-TLambda}
\addsym $\mu_E^-(\G),\mu_E^+(\G)$: {mutation of Brauer tree}{sym-muEG}
\addsym $\G^\op$: {the opposite of $\G$}{sym-Gop}

\end{tabbing}


\end{document}